\newtheorem{theorem}{Theorem}
\newtheorem{lemma}[theorem]{Lemma}
\newtheorem{corollary}[theorem]{Corollary}
\newtheorem{proposition}[theorem]{Proposition}
\newtheorem{remark}[theorem]{Remark}
\newtheorem{definition}[theorem]{Definition}
\newtheorem{conjecture}[theorem]{Conjecture}
\newtheorem{theoremletter}{Theorem}
\newenvironment{acknowledgement}{\noindent\textbf{Acknowledgments.}}{}
\newcommand{\innerthmname}{}
\theoremstyle{definition}
\def\namedlabel#1#2{\begingroup
	#2%
	\def\@currentlabel{#2}%
	\phantomsection\label{#1}\endgroup
}
\newcommand{\ud}{\mathrm{d}}
\title[Classification for solutions to critical sixth order equations]{Classification for positive singular solutions to critical sixth order equations} 
\thanks{This research is partially supported by S\~ao Paulo Research Foundation (FAPESP) \#2020/07566-3 and \#2021/15139-0 and Natural Sciences and Engineering Research Council of Canada (NSERC)}
\author[J.H. Andrade]{Jo\~{a}o Henrique Andrade}
\author[J. Wei]{Juncheng Wei}
\address[J.H. Andrade]{
	Department of Mathematics,
	University of British Columbia
	\newline\indent 
	V6T 1Z2, Vancouver-BC, Canada
	\newline\indent
	and
	\newline\indent 
	Institute of Mathematics and Statistics,
	University of S\~ao Paulo
	\newline\indent 
	05508-090, S\~ao Paulo-SP, Brazil
}
\email{\href{mailto:andradejh@math.ubc.ca}{andradejh@math.ubc.ca}}
\email{\href{mailto:andradejh@ime.usp.br}{andradejh@ime.usp.br}}
\address[J. Wei]{
	Department of Mathematics,
	University of British Columbia
	\newline\indent 
	V6T 1Z2, Vancouver-BC, Canada}
\email{\href{mailto:jcwei@math.ubc.ca}{jcwei@math.ubc.ca}}
\subjclass[2020]{35J60, 35B09, 35J30, 35B40}
\keywords{Tri-Laplacian, Critical exponent, Sixth order equation, Liouville-type theorem, Emden--Fowler solutions}
\begin{document}
	
	\begin{abstract}
		We classify entire positive singular solutions to a family of critical sixth order equations in the punctured space with a non-removable singularity at the origin. 
		More precisely, we show that when the origin is a non-removable singularity, solutions are given by a singular radial factor times a periodic solution to a sixth order IVP with constant coefficients.
		On the technical level, we combine integral sliding methods and qualitative analysis of ODEs, based on a conservation of energy result, to perform a topological two-parameter shooting technique.
		We first use the integral representation of our equation to run a moving spheres technique, which proves that solutions are radially symmetric with respect to the origin.
		Thus, in Emden--Fowler coordinates, we can reduce our problem to the study of an sixth order autonomous ODE with constant coefficients.
		The main heuristics behind our arguments is that since all the indicial roots of the ODE operator are positive, it can be decomposed into the composition of three second order operators satisfying a comparison principle. 
		This allows us to define a Hamiltonian energy which is conserved along solutions, from which we extract their qualitative properties, such as uniqueness, boundedness, asymptotic behavior, and classification.
	\end{abstract}
	
	\maketitle
	
	\bigskip
	\begin{center}
		\footnotesize
		\tableofcontents
	\end{center}
	
	\section{Introduction}
	We are interested in classifying (classical) positive singular solutions $u\in C^{6}(\mathbb{R}^n\setminus\{0\})$ with $n\geqslant7$ (which will always be assumed so forth) to the following family of  critical sixth order PDEs on the punctured space
	\begin{flalign}\tag{$\mathcal P_{6,\infty}$}\label{ourPDE}
		(-\Delta)^3u=c_{n}u^{\frac{n+6}{n-6}} \quad {\rm in} \quad \mathbb{R}^n\setminus\{0\}.
	\end{flalign}
	Here $\Delta^3=\Delta\circ\Delta\circ\Delta$ is the tri-Laplacian and $f(u):=c_{n}|u|^{p-2}u$ with $p=\frac{2n}{n-6}:=2^{\#}$ is critical in the sense of the compact Sobolev embedding of $H^{3}(\mathbb R^n)$ and $c_{n}$ is a normalizing geometric constant given by
	\begin{equation*}
		c_{n}:=\frac{n(n-6)(n^4-20n^2+64)}{64}.
	\end{equation*}
	
	We say that a positive solution $u\in C^{6}(\mathbb{R}^n\setminus\{0\})$ has a removable singularity at the origin if $\lim_{x\rightarrow0}u(x)<\infty$, that is, it can be continuously extended across the origin. 
	Otherwise, we say that the origin is a non-removable singularity. 
	Let us call these solutions non-singular and singular, respectively. 
	
	Let us mention that the second-named author and X. Xu \cite{MR1679783} studied non-singular solutions to \eqref{ourPDE}. 
	On this subject, they are based on a sliding technique to prove that all its positive non-singular  solutions are radially symmetric with respect to some point and have a closed expression.
	This result confirmed a conjecture by E. Lieb \cite{MR717827} about the classification of extremal function for the Sobolev inequality. These results can be stated as follows
	
	\begin{theoremletter}\label{thm:chen-lin-ou}
		Let $u\in C^{6}(\mathbb{R}^n)$ be a positive non-singular   solution to \eqref{ourPDE}. Then, there exist $x_0\in\mathbb R^n$ and $\mu\in\mathbb R$ such that $u$ is radially symmetric with respect to $x_0$ and monotonically decreasing. Moreover, it holds
		\begin{equation*}
			u(x)=\left(\frac{2\mu}{\mu^2+|x-x_0|^2}\right)^{\frac{6-n}{2}}.
		\end{equation*}
		The elements in this family are called spherical solutions and they are denoted by $u_{x_0,\mu}$.
	\end{theoremletter}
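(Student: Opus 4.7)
The plan is to run the method of moving spheres on an equivalent integral formulation of \eqref{ourPDE}, a strategy that is natural in view of the conformal invariance shared by the Riesz kernel $|x-y|^{6-n}$ and by the critical nonlinearity $u^{2^{\#}-1}$.

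The first and most delicate step is to establish the integral representation
\begin{equation*}
u(x) = C_n \int_{\mathbb{R}^n} \frac{u(y)^{(n+6)/(n-6)}}{|x-y|^{n-6}}\, \ud y,
\end{equation*}
for a suitable positive constant $C_n$, the heuristic being that $(-\Delta)^3$ inverts the Riesz potential of order $6$. To make this rigorous, I would first prove the positivity conditions $-\Delta u \geqslant 0$ and $\Delta^2 u \geqslant 0$ on $\mathbb{R}^n$, following a Lin / Wei--Xu-type spherical averaging argument: setting $\bar u(r)$, $\overline{\Delta u}(r)$, $\overline{\Delta^2 u}(r)$ and exploiting the triangular structure of the iterated radial Laplacian, a sign violation at some point would propagate through the averages and force $u$ to become negative at a finite radius, contradicting $u>0$. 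With these sign properties secured, I would then integrate \eqref{ourPDE} against the Newtonian kernel on balls $B_R$ and let $R \to \infty$; the positivity bounds guarantee convergence of the representation up to an additive harmonic function, which is a nonnegative constant by Liouville and is killed by a growth analysis at infinity.

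With the integral equation in hand, I would perform the moving spheres. For $x_0 \in \mathbb{R}^n$ and $\lambda > 0$, introduce the Kelvin transform
\begin{equation*}
u_{x_0,\lambda}(x) := \left(\frac{\lambda}{|x-x_0|}\right)^{n-6} u\!\left(x_0 + \frac{\lambda^2(x-x_0)}{|x-x_0|^2}\right),
\end{equation*}
which satisfies the same integral equation on $\mathbb{R}^n \setminus \{x_0\}$ by conformal covariance of the Riesz kernel. A direct manipulation yields the comparison identity
\begin{equation*}
u(x) - u_{x_0,\lambda}(x) = \int_{\mathbb{R}^n \setminus B_\lambda(x_0)} K_{x_0,\lambda}(x,y)\bigl[u(y)^{2^{\#}-1} - u_{x_0,\lambda}(y)^{2^{\#}-1}\bigr]\, \ud y,
\end{equation*}
with a kernel $K_{x_0,\lambda}(x,y)>0$ for $x,y$ in the complement of $B_\lambda(x_0)$. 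Starting from small $\lambda$, where $u_{x_0,\lambda}\leqslant u$ holds trivially outside the ball, I would slide $\lambda$ outward until one of two alternatives occurs: either a critical radius $\lambda_0(x_0)<\infty$ is reached at which $u \equiv u_{x_0,\lambda_0}$, or $u_{x_0,\lambda}\leqslant u$ persists for every $\lambda>0$. Varying $x_0$ over $\mathbb{R}^n$, the second alternative would force $u$ to be identically constant, contradicting \eqref{ourPDE}; hence $\lambda_0(x_0)$ is finite for every $x_0$, and the Li--Zhang calculus lemma applied to the map $x_0 \mapsto \lambda_0(x_0)$ pins $u$ down to the stated one-parameter family of spherical solutions $u_{x_0,\mu}$.

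The main obstacle is the first step: the rigorous derivation of the integral representation and, in particular, the positivity of the intermediate Laplacians. For the second-order semilinear problem this is automatic from the strong maximum principle, and for the bi-Laplacian it is the classical observation of Lin; here one must iterate the superharmonicity argument carefully, controlling the decay of $u$, $\Delta u$ and $\Delta^2 u$ at infinity against the critical growth so that all boundary terms in the averaging identities vanish in the limit. Once this hurdle is cleared, the moving-spheres dichotomy and the Li--Zhang rigidity lemma close the classification in a by-now-standard fashion.
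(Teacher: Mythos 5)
Your outline is correct and is essentially the route the paper relies on: Theorem~\ref{thm:chen-lin-ou} is not proved in this manuscript but quoted from Wei--Xu \cite{MR1679783}, and the steps you describe (sign of the intermediate Laplacians via spherical averaging, passage to the integral equation, then a sliding argument) are exactly the strategy of that reference and of the machinery the paper itself builds for the singular case (Proposition~\ref{lm:integralrepresentation} via \cite{MR2200258}, Lemmas~\ref{lm:superharmonicity1}--\ref{lm:superharmonicity2}, and the integral moving spheres of Lemma~\ref{lm:movingspheres}). The only genuine difference is that you close with moving spheres plus the Li--Zhang calculus lemma, whereas \cite{MR1679783} runs moving planes and then identifies the radial profile; your variant is legitimate and in fact a bit cleaner, since rigidity under inversions yields the bubble formula directly. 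One small caution in your first step: the difference between $u$ and the Riesz potential of $c_nu^{\frac{n+6}{n-6}}$ is a priori tri-harmonic, not harmonic, so ``Liouville for harmonic functions'' alone does not dispose of it; you need the sign conditions $-\Delta u\geqslant0$, $\Delta^2u\geqslant0$ together with the growth/integrability analysis (as in \cite[Theorem~4.3]{MR2200258}) to reduce the remainder to a nonnegative constant and then kill it, which is exactly how the cited proofs proceed.
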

	
	Our main result in this manuscript extends Theorem~\ref{thm:chen-lin-ou} for the case when the origin is a non-removable singularity and can be stated as 
	\begin{theorem}\label{maintheorem}
		Let $u\in C^{6}(\mathbb{R}^n\setminus\{0\})$ be a positive singular solution to \eqref{ourPDE}. Then, $u$ is radially symmetric with respect to the origin and monotonically decreasing. Moreover, there exist $a_0\in\mathbb (0,a^{*}_n)$ and $T\in[0,T_{a_0}]$ such that
		\begin{equation*}
			u(x)=|x|^{\frac{6-n}{2}}v_{a}(-\ln |x|+T).
		\end{equation*}
		Here $v_{a}$ is the unique periodic bounded positive solution to the sixth order IVP $($or Cauchy problem$)$ below
		\begin{flalign}\tag{$\mathcal O_{6,\infty}$}\label{ourIVP}
			\begin{cases}
				v^{(6)}-K_4v^{(4)}+K_2v^{(2)}-K_0v+c_nv^{\frac{n+6}{n-6}}=0 \quad {\rm in} \quad \mathbb{R}\\
				v(0)=a_0, \quad v^{(2)}(0)=a_2, \quad v^{(4)}(0)=a_4, \quad {\rm and} \quad v^{(1)}(0)=v^{(3)}(0)=v^{(5)}(0)=0.
			\end{cases}
		\end{flalign}
		Here $K_0,K_2,K_4$ are dimensional constants $($see \eqref{coefficients}$)$, $a_2,a_4$ depend on $a_0$, and $a_*=K_0^{(n-6)/12}$.
		The elements in this family are called $($sixth order$)$ Emden--Fowler solutions and are denoted by $u_{a,T}$.
	\end{theorem}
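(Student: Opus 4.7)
My plan is to split the argument into two main parts: first, establish radial symmetry with respect to the origin via a moving-spheres technique applied to an equivalent integral equation; then, after reducing to a constant-coefficient autonomous sixth order ODE in Emden--Fowler coordinates, classify all bounded positive solutions using a conserved Hamiltonian together with a topological two-parameter shooting scheme.

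For radial symmetry, I would first rewrite \eqref{ourPDE} as an integral equation. Using the superharmonicity of $u$, $-\Delta u$ and $\Delta^{2}u$, together with Brezis--Lions-type representations iterated three times, one can show that
\begin{equation*}
u(x)=A|x|^{6-n}+\int_{\mathbb{R}^{n}}\frac{c_{n,3}\,c_{n}\,u(y)^{(n+6)/(n-6)}}{|x-y|^{n-6}}\,\mathrm{d}y,
\end{equation*}
for some $A\geqslant0$ encoding the non-removable singularity at $0$. With this formulation, I would run the moving-spheres method via Kelvin inversions $u_{\lambda}(x)=(\lambda/|x|)^{n-6}u(\lambda^{2}x/|x|^{2})$ centered at the origin: the critical exponent makes this transformation scale-invariant, and the singularity at $0$ rigidly forces the Kelvin center to be $0$ (a feature that distinguishes this case from Theorem~\ref{thm:chen-lin-ou}). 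Comparing $u$ and $u_{\lambda}$ on exterior shells through the integral identity closes the argument and yields both radial symmetry and monotonicity in $|x|$.

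Given radial symmetry, the substitution $t=-\ln|x|$ and $v(t)=|x|^{(n-6)/2}u(x)$ transforms \eqref{ourPDE} into the autonomous ODE in \eqref{ourIVP}, with linear symbol $P(\lambda)=\lambda^{6}-K_{4}\lambda^{4}+K_{2}\lambda^{2}-K_{0}$. Since all indicial roots of $P$ are positive, $P$ factors over $\mathbb{R}$ as
\begin{equation*}
P(\lambda)=(\lambda^{2}-\alpha_{1}^{2})(\lambda^{2}-\alpha_{2}^{2})(\lambda^{2}-\alpha_{3}^{2}),
\end{equation*}
and the linear operator decomposes as $L=L_{3}L_{2}L_{1}$ with $L_{i}=\partial_{t}^{2}-\alpha_{i}^{2}$. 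Each $L_{i}$ enjoys a comparison principle, and this factorization is the key algebraic input driving the subsequent qualitative analysis.

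The heart of the proof is then the construction of a conserved Hamiltonian $H(v,v',\dots,v^{(5)})$ assembled from the three factors $L_{i}$ together with the primitive of the nonlinearity, for which one checks $H'\equiv0$ along solutions by a careful integration by parts that exploits the symmetric factorization. Using conservation of $H$ together with the comparison principles for the $L_{i}$, I would show that every positive global solution is bounded and, by autonomy, periodic. A two-parameter shooting in the Cauchy data $(a_{2},a_{4})$ at $t=0$, with $v(0)=a_{0}$ and all odd-order derivatives vanishing at $0$ (the natural symmetry at a maximum), would then select for each admissible $a_{0}\in(0,a_{n}^{*})$ a unique pair $(a_{2}(a_{0}),a_{4}(a_{0}))$ producing a bounded periodic positive solution $v_{a}$; the remaining free parameter $T\in[0,T_{a_{0}}]$ records translation freedom in $t$ and completes the claimed representation. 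The principal obstacle I anticipate is precisely the construction of this Hamiltonian: the sixth order cross-terms whose time derivative must telescope are considerably more delicate than in the fourth order analogue, and the shooting plane must be laid out carefully so that blow-up and decay to zero are simultaneously excluded and uniqueness of the bounded trajectory is enforced.
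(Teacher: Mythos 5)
Your ODE part follows the paper's strategy in outline (factorization of the symbol into three second order operators with comparison principles, a conserved Hamiltonian, boundedness, and a two-parameter shooting in $(a_2,a_4)$), but the symmetry part contains a genuine gap. You propose to run the moving-spheres method only with Kelvin inversions $u_{\lambda}(x)=(\lambda/|x|)^{n-6}u(\lambda^{2}x/|x|^{2})$ centered at the origin and claim that comparing $u$ with $u_{\lambda}$ "yields both radial symmetry and monotonicity in $|x|$". This cannot work: inversion about spheres centered at $0$ acts only on the radial variable, so for a function $u(r,\theta)$ the comparison $u\geqslant u_{0,\lambda}$ carries no information whatsoever about the angular dependence; at best, when the critical radius $\mu^{*}(0)$ is finite, the equality $u=u_{0,\mu^{*}}$ expresses a reflection symmetry $v(t)=v(2t_{0}-t)$ in Emden--Fowler coordinates for each fixed direction, not rotational invariance. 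Moreover the inequality $u_{0,\lambda}\leqslant u$ outside $B_{\lambda}$ for \emph{all} $\lambda>0$ is false for the singular Fowler-type solutions (it would force $r^{(n-6)/2}u(r)$ to be monotone, contradicting periodicity in $\ln r$), so no monotonicity of $u$ can be extracted this way either. To obtain radial symmetry one must move spheres centered at \emph{arbitrary} points $x\in\mathbb R^{n}$, establish the dichotomy of Lemma~\ref{lm:movingspheres} (either $u_{x,\mu^{*}(x)}\equiv u$ for every $x$ with $\mu^{*}(x)<\infty$, or $\mu^{*}\equiv\infty$), and then invoke the calculus lemma of \cite[Proposition~2.1]{MR2479025}; the monotone decrease of $u$ is then \emph{not} a by-product of the sphere argument but comes from the ODE analysis, namely the inequality $v^{(1)}-\gamma_{n}v<0$ of Proposition~\ref{prop:odeclassification}~\ref{itm:C3}.

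Two further points in the ODE half are asserted rather than argued and would need repair. First, "bounded and, by autonomy, periodic" is not a valid inference: bounded solutions also include the constants and the homoclinic orbit $v(t)=c_{n}(2\cosh(t-T))^{-\gamma_{n}}$ (Corollary~\ref{cor:closedform}), and for the main theorem the homoclinic case must be excluded precisely by the non-removability of the singularity (it would give $u(x)\sim c_{n}$ as $x\to0$), a step absent from your outline; periodicity of the remaining non-constant bounded solutions requires the reflection symmetry at critical points coming from the uniqueness-with-two-data Lemma~\ref{lm:comparisonprinciple}, not autonomy alone. Second, uniqueness of the bounded trajectory selected by the shooting (your "unique pair $(a_{2}(a_{0}),a_{4}(a_{0}))$") is exactly the delicate part: in the paper it rests on the energy-ordering Lemma~\ref{lm:energyordering}, which orders bounded solutions in the $(v,v^{(1)})$ plane by their Hamiltonian; without some substitute for this ordering your shooting argument only produces existence, not uniqueness, of the periodic solution with $\inf v=a_{0}$.
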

	
	\begin{remark}
		Notice that the assumption that our solutions are classical is not restrictive.
		In fact, by a standard regularity lifting technique from \cite{MR1338474}, one can show that any weak solution $u\in H^3(\mathbb R^n\setminus\{0\})$ to \eqref{ourPDE} also salves \eqref{ourPDE} in classical sense, that is, $u\in C^6(\mathbb R^n\setminus\{0\})$ or even smooth $u\in C^\infty(\mathbb R^n\setminus\{0\})$ and \eqref{ourPDE} holds pointwise.
	\end{remark}
	
	Recently, T. Jin and J. Xiong \cite{arxiv:1901.01678} used variational techniques to prove existence of solutions to \eqref{ourPDE}.
	In fact, they considered a integral equation generalizing \eqref{ourPDE}, which includes non-local versions of the nonlinearity (see also \cite{MR2200258}).
	The full classification result for the critical fractional case is still unknown. 
	For other ranges of the power-nonlinearity, S. Luo et al. \cite{MR4257807,10.1093/imrn/rnab212} proved a monotonicity formula for solutions to related higher order poly-harmonic equations.
	We refer the reader to \cite{MR3850774,MR3611024,MR3747434,MR3636636,MR3190428,MR3973901,MR4123335,MR3632218,MR4438901} for more results on this subject. 
	
	Eq. \eqref{ourPDE} arises naturally in the study of the lack of compactness for the critical Sobolev embedding. 
	Besides, its applicability in PDEs, our result has also a connection with differential geometry.
	In this language, \eqref{ourPDE} is the locally conformally flat version of a more general geometric equation, the so-called sixth order GJMS equation \cite{MR2858236,MR1190438}, which is driven by the $P^6$ the sixth order GJMS operator. 
	In this geometrical language, our main results classify the metrics with constant (sixth order) $Q^6$-curvature on $(\mathbb S^n\setminus\{p,-p\},g_0)$, where $g_0$ is the standard round metric on the punctured unit sphere and
	$Q^6_{g_0}\equiv 2^{-5}{n(n^4-20n^2+64)}$.
	Indeed, the metric $g=u^{6/{(n-6)}}g_0$ has constant $Q^6$-curvature if, and only, if $u\in C^{\infty}(\mathbb{S}^n\setminus\{0\})$ is a (smooth) positive solution to the geometric PDE
	\begin{equation*}
		P^6_{g_0}u=c_nu^{\frac{n+6}{n-6}} \quad {\rm in} \quad \mathbb S^n\setminus\{p,-p\},
	\end{equation*}
	where
	\begin{equation*}
		P_{g_0}^6=\left(-\Delta_{g_0}+\frac{(n-6)(n+4)}{4}\right)\left(-\Delta_{g_0}+\frac{(n-4)(n+2)}{4}\right)\left(-\Delta_{g_0}+\frac{n(n-2)}{4}\right).
	\end{equation*}
	For more details on this subject, we refer the interested reader to \cite{MR3077914,MR3652455,MR4285731,MR3073887} and the references therein.
	It is also worth mentioning the relations of \eqref{ourPDE} with tri-harmonic maps \cite{MR3624536}.
	
	\begin{remark}
		In \cite{arxiv:1901.01678}, the asymptotic behavior of solutions to a local equation on the punctured ball of radius $R>0$ is studied.
		More precisely, it is proved that solutions to 
		\begin{flalign}\tag{$\mathcal P_{6,R}$}\label{ourlocalPDE}
			(-\Delta)^3u=c_{n}u^{\frac{n+6}{n-6}} \quad {\rm in} \quad B_R\setminus\{0\},
		\end{flalign}
		satisfy
		\begin{equation*}
			u(x)=(1+\mathrm{o}(1))u_0(x) \quad {\rm as} \quad x\rightarrow0,
		\end{equation*}
		where $u_0\in C^6(\mathbb R^n\setminus\{0\})$ solves \eqref{ourPDE}.
		Our main theorem may be applied to improve the asymptotic behavior of solutions to \eqref{ourlocalPDE} near the origin.
		Based on \cite{MR1666838,arXiv:2003.03487,arxiv.2001.07984}, we conjecture that assuming $-\Delta u>0$ and $\Delta^2u>0$, one can find $\beta_0>0$ such that
		\begin{equation*}
			u(x)=(1+\mathcal{O}(|x|^{\beta_0}))u_0(x) \quad {\rm as} \quad x\rightarrow0.
		\end{equation*}
		A result like this have direct implications on the study of the $($sixth order$)$ singular GJMS.
	\end{remark}
	
	Now let us compare our results to the ones in the fourth and second order cases. 
	First, we consider positive solutions $u\in C^{4}(\mathbb{R}^n\setminus\{0\})$ the fourth order critical equation
	\begin{flalign}\tag{$\mathcal P_{4,\infty}$}\label{ourPDE4th}
		(-\Delta)^2u=\frac{n(n-4)^2(n^2-4)}{16}u^{\frac{n+4}{n-4}} \quad {\rm in} \quad \mathbb{R}^n\setminus\{0\},
	\end{flalign}
	where $n\geqslant 5$, $\Delta^2=\Delta\circ\Delta$ is the bi-Laplacian. 
	Notice that \eqref{ourPDE4th} is critical in the sense of the compact Sobolev embedding $H^{2}(\mathbb R^n)$.
	On this subject, we should mention that when the origin is a non-removable singularity, C. S. Lin \cite{MR1611691} (see also \cite{MR1769247}) obtained radial symmetry for positive solutions to \eqref{ourPDE4th} using the asymptotic moving planes technique. Recently, Z. Guo, et al. \cite{MR4094467} proved the existence of periodic solutions by applying a mountain pass theorem and conjectured that all ODE solutions should be periodic.
	Later on, R. L. Frank and T. K\"onig \cite{MR3869387}  answered this conjecture, obtaining more accurate results concerning the classification for entire positive singular solutions to \eqref{ourPDE4th}.
	
	\begin{theoremletter}
		Let $u\in C^{4}(\mathbb{R}^n\setminus\{0\})$ be a positive  solution to \eqref{ourPDE4th}. Then, $u$ is radially symmetric with respect to the origin and monotonically decreasing. Moreover,
		\begin{itemize}
			\item[{\rm (i)}] Assume that $u$ is non-singular. Then, there exist $x_0\in\mathbb R^n$ and $\mu\in\mathbb R$ such that
			\begin{equation*}
				u(x)=\left(\frac{2\mu}{\mu^2+|x-x_0|^2}\right)^{\frac{4-n}{2}}.
			\end{equation*}
			\item[{\rm (ii)}] Assume that $u$ is singular. Then, there exist $a_0\in\mathbb (0,[n(n-4)/(n^2-4)]^{n-4/8})$ and $T\in[0,T_{a_0}]$ such that
			\begin{equation*}
				u(x)=|x|^{\frac{4-n}{2}}v_{a}(-\ln |x|+T),
			\end{equation*}
			where $v_{a}$ is the unique periodic bounded solution to the fourth order IVP below
			\begin{flalign}\tag{$\mathcal C_{4,\infty}$}\label{ourIVP4th}
				\begin{cases}
					v^{(4)}-\frac{n(n-4)+8}{2}v^{(2)}+\frac{n^2(n-4)^2}{16}v-\frac{n(n-4)^2(n^2-4)}{16}v^{\frac{n+4}{n-4}}=0 \quad {\rm in} \quad \mathbb{R}\\
					v(0)=a_0, \quad v^{(2)}(0)=a_2, \quad {\rm and} \quad v^{(1)}(0)=v^{(3)}(0)=0.
				\end{cases}
			\end{flalign}
		\end{itemize}
	\end{theoremletter}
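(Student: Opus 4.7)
The plan is to combine a moving-plane/moving-sphere reduction to a radial ODE with an Emden-Fowler transformation, and then to run a topological shooting analysis based on a conserved Hamiltonian to classify the resulting phase portrait. First I establish radial symmetry and monotonicity in both cases. For part~(i), $u\in C^4(\mathbb{R}^n)$ is positive and decays at infinity (the decay follows from the integral representation $u = I_4\ast (c\,u^{(n+4)/(n-4)})$, where $I_4$ is the Riesz kernel of order $4$), so the classical moving planes method applies directly and yields radial symmetry about some $x_0\in\mathbb{R}^n$ together with strict radial monotonicity. For part~(ii), the origin is the only admissible center of symmetry, and I would use C.\,S.\ Lin's asymptotic moving planes argument, or equivalently a sliding argument on the integral representation (the approach used by the authors of this manuscript for the sixth order case), to force radial symmetry and monotonicity about $0$.

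Once radial symmetry is in hand, the substitution $u(x) = |x|^{(4-n)/2}v(t)$ with $t = -\ln|x|$ reduces \eqref{ourPDE4th} to an autonomous fourth order ODE. The computation relies on the factorization
\begin{equation*}
    e^{((n+4)/2)t}(-\Delta)^2\bigl(e^{-((n-4)/2)t}v(t)\bigr) = \bigl(\partial_t^2 - (n/2)^2\bigr)\bigl(\partial_t^2 - ((n-4)/2)^2\bigr)v(t),
\end{equation*}
whose coefficients, upon expansion, match $K_2 = (n(n-4)+8)/2$ and $K_0 = n^2(n-4)^2/16$ in \eqref{ourIVP4th}. Because both indicial roots $n/2$ and $(n-4)/2$ are real and positive, each second order factor $L_\pm := \partial_t^2 - \alpha_\pm^2$ satisfies a maximum principle on intervals, which I use to prove that every positive global solution $v$ is automatically bounded. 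Translation invariance of the autonomous ODE then lets me place the supremum at $t=0$, which enforces $v^{(1)}(0) = v^{(3)}(0) = 0$ as in the stated initial conditions.

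The heart of the argument is the construction of a conserved Hamiltonian $H[v, v^{(1)}, v^{(2)}, v^{(3)}]$, whose conservation along ODE orbits follows from the autonomous and scale-invariant (critical) nature of the equation. Coupled with the boundedness and positivity constraints, the level sets of $H$ confine orbits to a compact region of $\mathbb{R}^4$. I then run a two-parameter shooting scheme in the free data $(a_0, a_2) = (v(0), v^{(2)}(0))$, where $a_4 = v^{(4)}(0)$ is determined by reading the equation at $t = 0$. For each $a_0 \in (0, a_n^*)$ with $a_n^* = [n(n-4)/(n^2-4)]^{(n-4)/8}$ (the cylindrical equilibrium value), the requirement that the orbit remain positive and bounded selects a unique admissible $a_2(a_0)$; the resulting trajectory cannot escape to infinity or to $\{v=0\}$, so it must be periodic of some period $T_{a_0}$. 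Undoing the Emden-Fowler substitution and incorporating the translation freedom $T \in [0, T_{a_0}]$ yields the family in~(ii); for~(i) the limiting boundary case $a_0 \downarrow 0$ corresponds to a homoclinic connection to the trivial equilibrium which, after the inverse change of variables, produces the spherical solution with its two free parameters $x_0$ and $\mu$.

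The main technical obstacle will be the uniqueness step in the shooting scheme: existence of a bounded positive orbit for each admissible $a_0$ follows from a mountain-pass construction in Emden-Fowler variables (as in Guo et al.), but excluding a second bounded orbit at the same $a_0$ requires either strict monotonicity of the period function $T_{a_0}$ in $a_2$ or a carefully tailored comparison principle built out of the factorization $L_+L_-$. A secondary difficulty, specific to part~(ii), is confirming that the Emden-Fowler profile produces a genuinely non-removable singularity at the origin rather than an integrable weak singularity, which is settled by matching the growth rate $|x|^{(4-n)/2}$ against the removability threshold for weak solutions of the critical biharmonic equation.
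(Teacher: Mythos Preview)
The paper does not prove this statement; Theorem~B is quoted as a known result from the literature (the non-singular classification is due to C.-S.~Lin \cite{MR1611691} and the singular classification to R.~L.~Frank and T.~K\"onig \cite{MR3869387}), included only as background to motivate the sixth order result. So there is no ``paper's own proof'' to compare against.

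That said, your outline is essentially the Frank--K\"onig argument, and it parallels exactly what the present paper carries out in the sixth order case (Sections~\ref{sec:radialsymmetry}--\ref{sec:ODEanalysis}): integral moving spheres for radial symmetry, Emden--Fowler change of variables, factorization of the cylindrical operator into second order factors with positive indicial roots, a conserved Hamiltonian, and a shooting argument. Your factorization $(\partial_t^2-(n/2)^2)(\partial_t^2-((n-4)/2)^2)$ is correct and matches the coefficients in \eqref{ourIVP4th}. Two small points: first, the shooting here is really \emph{one}-parameter for each fixed $a_0$ (only $a_2$ is free, since $v^{(4)}(0)$ is read off from the equation), not two-parameter; second, the uniqueness you flag as the main obstacle is exactly what Frank--K\"onig handle via the energy-ordering lemma (the analogue of Lemma~\ref{lm:energyordering} here), not via monotonicity of the period map or a mountain-pass argument---the comparison principle built from the factorization $L_+L_-$ is indeed the right tool, and it yields uniqueness directly once one shows that two bounded solutions with the same value and first derivative at a point must coincide.
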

	
	Second, we consider positive singular solutions $u\in C^{2}(\mathbb{R}^n\setminus\{0\})$ the second order critical equation below
	\begin{flalign}\tag{$\mathcal P_{2,\infty}$}\label{ourPDE2th}
		-\Delta u=\frac{n(n-4)}{8}u^{\frac{n+2}{n-2}} \quad {\rm in} \quad \mathbb{R}^n\setminus\{0\},
	\end{flalign}
	where $n\geqslant 3$, $\Delta$ is the Laplacian. 
	Let us notice that \eqref{ourPDE2th} is critical in the sense of the compact Sobolev embedding of $H^{1}(\mathbb R^n)$.
	All the aforementioned classification results were inspired by the classical theorems of R. H. Fowler \cite{fowler} (see also \cite{MR982351}) and T. Aubin \cite{MR0448404} and G. Talenti \cite{MR0463908} on the study of conformally equivalent metrics with constant scalar curvature, the so-called Yamabe problem.
	
	\begin{theoremletter}
		Let $u\in C^{2}(\mathbb{R}^n\setminus\{0\})$ be a positive  solution to \eqref{ourPDE2th}. Then, $u$ is radially symmetric with respect to the origin and monotonically decreasing. Moreover,
		\begin{itemize}
			\item[{\rm (i)}] if $u$ is non-singular, then there exist $x_0\in\mathbb R^n$ and $\mu\in\mathbb R$ such that
			\begin{equation*}
				u(x)=\left(\frac{2\mu}{\mu^2+|x-x_0|^2}\right)^{\frac{2-n}{2}},
			\end{equation*}
			which are called spherical solutions and they are denoted by $u_{x_0,\mu}$.
			\item[{\rm (ii)}] if $u$ is singular, then there exist $a_0\in\mathbb (0,[(n-2)/n]^{(n-2)/4})$ and $T\in[0,T_{a_0}]$ such that
			\begin{equation*}
				u(x)=|x|^{\frac{2-n}{2}}v_{a}(-\ln |x|+T),
			\end{equation*}
			where $v_{a}$ is the unique periodic bounded solution to the second order IVP below
			\begin{flalign}\tag{$\mathcal C_{2,\infty}$}\label{ourIVP2th}
				\begin{cases}
					v^{(2)}-\frac{(n-2)^2}{4}v+\frac{n(n-2)}{4}v^{\frac{n+2}{n-2}}=0 \quad {\rm in} \quad \mathbb{R}\\
					v(0)=a_0 \quad {\rm and} \quad v^{(1)}(0)=0.
				\end{cases}
			\end{flalign}
		\end{itemize}
	\end{theoremletter}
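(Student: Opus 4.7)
The plan is to replicate, in the much simpler second-order setting, the strategy used for \eqref{ourPDE}: first prove that every positive solution is radially symmetric about the origin by an integral moving-spheres argument, then reduce to an autonomous ODE in Emden--Fowler coordinates and classify its bounded positive orbits via a conserved Hamiltonian.

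For the symmetry step, I would rewrite \eqref{ourPDE2th} via the Riesz representation of $(-\Delta)^{-1}$ as
\begin{equation*}
u(x)=\gamma_n\int_{\mathbb{R}^n}\frac{u(y)^{(n+2)/(n-2)}}{|x-y|^{n-2}}\,\ud y+A|x|^{2-n},
\end{equation*}
where $\gamma_n$ is the Newtonian constant and $A\geq 0$ measures the Dirac mass at $0$ carried by the distribution $-\Delta u -\frac{n(n-4)}{8}u^{(n+2)/(n-2)}$; integrability of the nonlinear term near the origin is ensured by the a priori growth $u(x)\lesssim|x|^{(2-n)/2}$ coming from standard Serrin/capacitary bounds, together with $n\geq 3$. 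On this integral formulation I would run Chen--Li--Ou moving spheres at an arbitrary base point $x_0$: write $u^{x_0,\lambda}(x):=(\lambda/|x-x_0|)^{n-2}u\bigl(x_0+\lambda^2(x-x_0)/|x-x_0|^2\bigr)$ and slide the sphere $\partial B_\lambda(x_0)$ inward. A critical radius at some $x_0\neq 0$ would force $u$ to be smooth near $0$, contradicting non-removability; hence the only admissible base point is $x_0=0$, yielding radial symmetry and strict monotonicity in $r=|x|$.

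With radial symmetry in hand, the substitution $u(x)=|x|^{(2-n)/2}v(t)$, $t=-\ln|x|$, converts the radial form of \eqref{ourPDE2th} into the autonomous ODE in \eqref{ourIVP2th}; the condition $v^{(1)}(0)=0$ is enforced by fixing $t=0$ at a critical point of $u$ (which exists in the singular case since $v$ is bounded and nonconstant). In the non-singular case, the origin is removable and the classical Caffarelli--Gidas--Spruck theorem \cite{MR982351} identifies $u$ with a spherical solution $u_{x_0,\mu}$, proving (i). In the singular case, $v$ is a global, bounded, strictly positive solution of the Cauchy problem, and its analysis reduces to a planar Hamiltonian system with conserved energy
\begin{equation*}
E(v,v')=\tfrac{1}{2}(v')^2-\tfrac{(n-2)^2}{8}v^2+\tfrac{(n-2)^2}{8}v^{2n/(n-2)}.
\end{equation*}
In $\{v>0\}$ there are precisely two equilibria: a saddle at $v=0$ and a center at $v_{\ast}=[(n-2)/n]^{(n-2)/4}$. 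Negative-energy level sets are closed orbits encircling the center and produce the periodic solutions $v_{a_0}$ with $a_0\in(0,v_{\ast})$; the zero-energy level is the homoclinic loop at the saddle, which via inverse Emden--Fowler corresponds to a spherical solution $u_{0,\mu}$; positive-energy levels are unbounded and incompatible with a global positive $u$. Parameterizing each closed orbit by its turning height $a_0$ and by a translation $T\in[0,T_{a_0}]$, where $T_{a_0}$ is the fundamental period, produces precisely the family announced in (ii).

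The principal obstacle is the moving-spheres step in the presence of a non-removable singularity: one must justify the integral representation with the correct defect term $A|x|^{2-n}$ and then rule out every critical sphere centered away from the origin, using the fact that the singular point is unique. Once symmetry about $0$ is secured, the remainder is an elementary planar Hamiltonian analysis, strictly simpler than (and serving as a blueprint for) the sixth-order topological shooting argument carried out in the paper for \eqref{ourIVP}.
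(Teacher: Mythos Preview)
The paper does not give its own proof of this statement: Theorem~C is quoted as classical background (Fowler \cite{fowler}, Caffarelli--Gidas--Spruck \cite{MR982351}, Aubin \cite{MR0448404}, Talenti \cite{MR0463908}) and serves only as motivation for the sixth-order result. Your outline is precisely the standard route to that classical theorem --- integral moving spheres for symmetry, Emden--Fowler change of variables, and planar Hamiltonian phase-portrait analysis --- and is correct in substance.

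Two small points of phrasing are worth tightening. First, the upper bound $u(x)\lesssim|x|^{(2-n)/2}$ near the origin is not a ``Serrin/capacitary'' estimate but the deep a~priori bound of \cite{MR982351}; for the moving-spheres machinery you actually only need $u^{(n+2)/(n-2)}\in L^1_{\loc}$, which follows from the softer cut-off argument analogous to the paper's Lemma~\ref{lm:integrability}. Second, your sentence ``hence the only admissible base point is $x_0=0$'' is slightly misleading: one never runs the spheres from the singular point. The correct dichotomy (as in Lemma~\ref{lm:movingspheres}) is that either $\mu^*(x)<\infty$ for every $x$, forcing $u$ to be a bubble and hence non-singular, or $\mu^*(x)=\infty$ for every $x\neq 0$, and it is this latter alternative that yields radial symmetry about the origin via the Li-type calibration lemma. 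With those two clarifications your sketch is complete and matches the literature the paper cites.
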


	Let us describe our strategy to prove Theorem~\ref{maintheorem}.
	First, we prove integrability and superharmonicity properties for solutions to \eqref{ourPDE}, which we use to perform an integral sliding technique and prove that positive singular solutions to \eqref{ourPDE} are radially symmetric.
	Second, we perform a change of variables to transform \eqref{ourPDE} into \eqref{ourIVP} a sixth order IVP with constant coefficients, which can be decomposed into three second order problems satisfying a maximum principle. 
	This allows us to define a Hamiltonian energy that is conserved along solutions, which, we use to prove some qualitative properties for positive solutions to \eqref{ourIVP}.
	Third, we use a two-parameter shooting technique to finish the proof of our classification result. 
	By undoing the Emden--Fowler change of variables, this can be easily translated into a result about solutions to \eqref{ourPDE}.
	
	The main difficulties in our approach are the lack of maximum principle for higher order operators and the possibly chaotic behavior enjoyed by solutions to ODEs driven by these operators. 
	The former can be overcome either by an integral representation formula or a decomposition into second order operators.
	To deal with the later issue, we use the conservation of the energy and qualitative properties previously proved. 
	These properties assure that this sixth order ODE behaves as a second order one.
	
	We remark that \eqref{ourPDE}, \eqref{ourPDE4th}, and \eqref{ourPDE2th} are particular cases of a more general class of equations, which we describe as follows.
	For any $m\in\mathbb{N}^+$, we are interested in classifying (smooth) positive singular solutions $u\in C^{2m}(\mathbb{R}^n\setminus\{0\})$ with $n>2m:=N$ (which will always be assumed so forth) to the following family of  critical even order poly-harmonic PDEs
	\begin{flalign}\tag{$\mathcal P_{N,\infty}$}\label{2mthPDE}
		(-\Delta)^mu=c_{n,N}u^{\frac{n+N}{n-N}} \quad {\rm in} \quad \mathbb{R}^n\setminus\{0\}.
	\end{flalign}
	Here $(-\Delta)^m$ is the $m$-poly-Laplacian and $c_{n,N}$ is a normalizing constant given by
	\begin{equation}\label{geoemtricconstanst}
		c_{n,N}=2^{N}{\Gamma\left(\frac{n+N}{4}\right)^2}{\Gamma\left(\frac{n-N}{4}\right)^{-2}},
	\end{equation}
	where $\Gamma(s)=\int_{0}^{\infty}\tau^{s-1}e^{-\tau}\ud\tau$ is the standard Gamma function.
	The power $\frac{n+N}{n-N}:=2_m^*-1$  is critical in the sense of the compact Sobolev embedding $H^{m}(\mathbb R^n)$.
	
	We remark that almost all of our technical results in this manuscript extend to this class of even-order poly-harmonic operators.
	In this regard, a classification result for entire singular solutions to \eqref{2mthPDE} in the sense of Theorem~\ref{maintheorem} is believed to be true. 
	Let us state this conjecture as follows
	\begin{conjecture}
		Let $u\in C^{N}(\mathbb{R}^n\setminus\{0\})$ be a positive singular solution to \eqref{ourPDE}. Then, $u$ is radially symmetric with respect to the origin and monotonically decreasing. Moreover, there exist $a_0\in\mathbb (0,a^{*}_{n,N})$ and $T\in[0,T_{a_0}]$ such that
		\begin{equation*}
			u(x)=|x|^{\frac{N-n}{2}}v_{a}(\ln |x|+T).
		\end{equation*}
		Here $v_{a}$ is the unique periodic bounded positive solution to the following higher order IVP
		\begin{flalign}\tag{$\mathcal C_{N,\infty}$}\label{2mthIVP}
			\begin{cases}
				\sum_{j=0}^{m}(-1)^{j+1}K_{2j,N}(n)v^{(2j)}=c_{n,N}v^{\frac{n+N}{n-N}} \quad {\rm in} \quad \mathbb{R}\\
				v^{(2j)}(0)=a_j \quad {\rm and} \quad v^{(2j-1)}(0)=0 \quad {\rm for} \quad j=1,\dots,m.
			\end{cases}
		\end{flalign}
		Here $K_{2j,N}(n),a^*_{n,N}>0$ are dimensional constants for all $j=1,\dots,m$, namely
		\begin{equation*}
			a^*_{n,N}=K_0^{\frac{n-N}{2N}} \quad {\rm and} \quad K_{2j,N}(n)=\sigma_j\left(\pm\sqrt{\lambda_{1,N}(n)},\dots,\pm\sqrt{\lambda_{m,N}(n)}\right),
		\end{equation*}
		where $\sigma_j$ are the $j$-th harmonic polynomial and
		\begin{equation*}
			\lambda_{j,N}(n)=\frac{n-N+4j}{2}
		\end{equation*}
		are the indicial roots of the constant coefficients ODE operator.
	\end{conjecture}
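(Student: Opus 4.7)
The plan is to mirror the three-step strategy used to establish Theorem~\ref{maintheorem}. First, prove that every positive singular solution to \eqref{2mthPDE} is radially symmetric and monotonically decreasing about the origin. Second, pass to Emden--Fowler coordinates in order to reduce \eqref{2mthPDE} to the autonomous constant coefficient ODE \eqref{2mthIVP}, exploit the factorization of the linearized operator into $m$ second order blocks with positive indicial roots, and extract a conserved Hamiltonian energy. Third, classify bounded positive periodic solutions of \eqref{2mthIVP} via an $(m-1)$-parameter topological shooting scheme.

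For the radial step, I would first prove that $u\in L^{2n/(n-N)}(\mathbb R^n)$ and that each iterated Laplacian satisfies $(-\Delta)^j u>0$ on $\mathbb R^n\setminus\{0\}$ for $j=1,\dots,m-1$; these are obtained by iterating the Br\'ezis--Lions removable singularity argument and combining positivity of the Riesz kernel of order $N$ with a comparison principle applied block by block. These two ingredients upgrade $u$ to a solution of an integral representation against the Riesz kernel $G_N$ with a singular correction supported at the origin. A moving spheres technique applied to this integral equation, structurally identical to the one employed for $m=3$, then produces radial symmetry and monotonicity.

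For the ODE reduction, set $v(t)=|x|^{(n-N)/2}u(|x|)$ with $t=-\ln|x|$. The linear part of the transformed equation factors as $\prod_{j=1}^{m}(-D^2+\lambda_{j,N}(n)^2)$, so that the coefficients $K_{2j,N}(n)$ in \eqref{2mthIVP} are precisely the elementary symmetric functions of the squared indicial roots; positivity of every $\lambda_{j,N}(n)$ makes each block $-D^2+\lambda^2$ obey a comparison principle on positive functions. Testing \eqref{2mthIVP} against a carefully chosen combination of $v$ and its derivatives produces a Hamiltonian $H[v]$ that is polynomial in the jet of $v$ plus the potential term $\tfrac{c_{n,N}(n-N)}{2n}v^{2n/(n-N)}$, and whose vanishing derivative along trajectories follows from the block factorization. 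This conservation of $H$ plays the same role here as in the proof of Theorem~\ref{maintheorem}.

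For the shooting step, fix $a_0\in(0,a^*_{n,N})$, where $a^*_{n,N}$ is singled out by the equilibrium condition $K_{0}\,a=c_{n,N}\,a^{(n+N)/(n-N)}$; the odd derivatives vanish at $t=0$ by radial symmetry, so one shoots in the $(m-1)$-parameter family of remaining even initial data $(a_1,\dots,a_{m-1})$. Using $H[v]$ as an \emph{a priori} bound, and a sign analysis of the iterated second order blocks to exclude blow-up and non-periodic bounded behavior, a Miranda-type topological argument on the first-return map at the first critical time should locate a unique $(a_1,\dots,a_{m-1})$ yielding a bounded, positive, periodic orbit, closing the classification. The \emph{main obstacle} is precisely this high-dimensional topological shooting: in the fourth order case ($m=2$) the problem is essentially one-dimensional and tractable by phase-plane monotonicity, while our sixth order work ($m=3$) relies on a two-dimensional degree computation that exploits the special algebraic structure of the three-factor decomposition and a delicate sign analysis of iterated Laplacians. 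For arbitrary $m$, generic higher order ODEs may exhibit chaotic behavior, and the nondegeneracy of the boundary of the admissible shooting set becomes progressively harder to verify; an inductive comparison principle across all $m$ levels, combined with Pohozaev-type identities to rule out non-periodic bounded orbits, seems to be the natural but nontrivial path needed to close the argument in full generality.
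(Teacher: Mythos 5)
Note first that the statement you are proving is stated in the paper only as a conjecture: the authors explicitly say that ``almost all'' of their technical results extend to the general poly-harmonic case, and they offer no proof for arbitrary $m$. So there is no paper argument to compare yours against; the question is whether your outline closes the conjecture, and it does not. Your first two steps (integrability, positivity of the iterated Laplacians, the integral representation with the Riesz kernel, the moving spheres argument, the Emden--Fowler reduction, the factorization of the linear part into $m$ blocks $-D^2+\lambda_{j,N}(n)^2$ with positive roots, and the conserved Hamiltonian) are indeed routine extensions of Sections~\ref{sec:preliminaries} and \ref{sec:radialsymmetry} and of Lemma~\ref{lm:boundededness}, and this is exactly the part the authors mean by ``almost all.'' The genuine gap is in the ODE classification, and it is not primarily the topological part of the shooting that you flag: the connectedness argument used in Step~2 of the proof of Proposition~\ref{prop:odeclassification} (two disjoint nonempty open subsets of a connected parameter set cannot cover it) works verbatim in $(m-1)$ parameters, so existence of a bounded positive orbit is not the bottleneck.

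What does not generalize for free — and what your proposal leaves entirely unaddressed beyond the phrase ``an inductive comparison principle across all $m$ levels'' — is the chain Lemma~\ref{lm:signproperty} $\Rightarrow$ Corollary~\ref{cor:energyidentity} $\Rightarrow$ Lemma~\ref{lm:ordering-comparison} $\Rightarrow$ Lemma~\ref{lm:energyordering}, which is what yields uniqueness of the bounded solution with $\inf v=a_0$, the symmetry statements, and ultimately periodicity via Lemma~\ref{lm:classification}. In the sixth order case the Hamiltonian splits as $\tfrac12(v^{(3)})^2+\mathcal{E}_2(v)v^{(2)}+\mathcal{E}_1(v)v^{(1)}+G(v)$ with only two auxiliary summands, and the proof that $\operatorname{sign}(\mathcal{E}_i(v))=\operatorname{sign}(v^{(i)})$ hinges on rewriting $\mathcal{E}_1$, $\mathcal{E}_2$ as second and fourth order operators in $v^{(1)}$, $v^{(2)}$ that again factor into blocks with \emph{positive} constants (the choice of $\alpha_1,\alpha_2$ in \eqref{auxiliarysignfunctions}--\eqref{ordering4}), so that maximum principles apply on intervals whose endpoint data vanish by Corollary~\ref{cor:symmetry} and Lemma~\ref{lm:energylimits}. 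For general $m$ there are $m-1$ auxiliary summands of increasing order, and one must verify that each admits such a positive factorization after subtracting a square — a nontrivial algebraic condition on the $K_{2j,N}(n)$ that plays the role of \eqref{discriminant} and is not implied merely by positivity of the indicial roots. Without this, the energy ordering in the $(v,v^{(1)})$ plane, the reduction of uniqueness to two (or three) initial conditions, and the exclusion of non-periodic bounded orbits all collapse, and the possibly chaotic dynamics you mention cannot be ruled out. A Pohozaev identity or a Miranda-type return map does not substitute for this sign structure. So the proposal is a sensible roadmap, faithful in spirit to the proof of Theorem~\ref{maintheorem}, but it does not prove the conjecture; the missing sign and ordering lemmas for general $m$ are precisely where the open difficulty lies.
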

	
	We explain the plan for the rest of the manuscript.
	In Section~\ref{sec:preliminaries}, we introduce some definitions and prove some preliminary results that will be used subsequently, such as integral representation formulas, Kelvin transform, radial Emden--Fowler coordinates, decomposition result, and conservation of energy.
	In Section~\ref{sec:radialsymmetry}, we apply the integral moving spheres method to prove that solutions to \eqref{ourPDE} are radially symmetric.
	In Section~\ref{sec:ODEanalysis}, we perform some ODE analysis to prove  qualitative properties for solutions to \eqref{ourIVP}, which we use to perform a topological shooting
	method and classify these solutions.
	In Section~\ref{sec:proofofmaintheorem}, we combine the energy conservation, the radial symmetry, and the ODE analysis to prove Theorem~\ref{maintheorem}.
	
	\numberwithin{equation}{section} 
	\numberwithin{theorem}{section}
	
	\section{Preliminaries}\label{sec:preliminaries}
	This section aims to introduce some necessary background definitions and results for developing the sliding methods and the asymptotic analysis that will be later used in this manuscript.
	
	\subsection{Integral representation}\label{subsec:integral}
	Now we use a Green identity to transform the differential equation \eqref{ourPDE} into an integral equation.
	In this way, we can avoid using the classical form of the maximum principle, and a sliding method is available \cite{MR3558255, MR2055032}, which will be used to classify solutions. 	
	
	The next result uses the Green identity to convert \eqref{ourPDE} into an integral system. 
	
	\begin{proposition}\label{lm:integralrepresentation}
		Let $u\in C^{6}(\mathbb R^n)$ be a positive solution to \eqref{ourPDE}. 
		Then $($up to constant$)$, it follows
		\begin{equation}\label{globalintegralsystem}\tag{$\mathcal{I}_6$}
			u(x)=\int_{\mathbb{R}^n}|x-y|^{6-n}{f}(u) \ud y \quad {\rm in} \quad \mathbb{R}^n\setminus\{0\}.
		\end{equation}
	\end{proposition}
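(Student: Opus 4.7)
The plan is to reconstruct $u$ from the Riesz potential of its nonlinearity. Set
\[
 w(x) := \int_{\mathbb{R}^n}|x-y|^{6-n}f(u(y))\,\ud y,
\]
so that, since the fundamental solution of $(-\Delta)^3$ in dimension $n>6$ is (a multiple of) $|x|^{6-n}$, one has $(-\Delta)^3 w = \gamma_n f(u)$ for an explicit dimensional constant $\gamma_n$, at least in the distributional sense. The goal is then to show $u = \gamma_n^{-1} c_n w$ pointwise.

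The first, and most delicate, step is to verify that $w$ is everywhere finite. For the integrability at infinity, I would first iteratively establish the superharmonicity of $-\Delta u$ and $(-\Delta)^2 u$: since $(-\Delta)^3 u = c_n u^{(n+6)/(n-6)}>0$, the spherical averages of these iterated Laplacians are monotone in the radius, and a standard argument (comparing with their asymptotic limits at infinity and ruling out negative behavior by positivity of $u$) forces them to be positive superharmonic on $\mathbb{R}^n\setminus\{0\}$. From this one extracts the decay $u(x)\lesssim |x|^{6-n}$ as $|x|\to\infty$, so that $f(u(y))\lesssim |y|^{-(n+6)}$ at infinity, which makes the Riesz integral convergent uniformly for $x$ in compact sets away from the origin. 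Local integrability near the origin for $f(u)$ can be reduced to a tail estimate using the positivity of $u$ and the structure of the critical exponent, together with an averaging argument on small spheres.

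The second step is routine once integrability is in hand: by Fubini and the kernel identity $(-\Delta)^3|x|^{6-n}=\gamma_n\delta_0$, we get $(-\Delta)^3 w = \gamma_n f(u)$ on $\mathbb{R}^n\setminus\{0\}$ in the classical sense by elliptic regularity. Define $\phi := u - \gamma_n^{-1} c_n w$. Then $(-\Delta)^3\phi = 0$ on $\mathbb{R}^n\setminus\{0\}$, and $\phi\in L^1_{\loc}(\mathbb{R}^n)$.

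The third step is to upgrade this to triharmonicity on all of $\mathbb{R}^n$ by removing the isolated singularity at the origin. Since $\{0\}$ has vanishing $H^3$-capacity for $n>6$, any $L^1_{\loc}$ solution of $(-\Delta)^3\phi=0$ on $\mathbb{R}^n\setminus\{0\}$ extends to a distributional (hence smooth) triharmonic function on $\mathbb{R}^n$, provided a mild growth bound such as $\phi(x)=o(|x|^{6-n})$ near $0$ holds; this bound is inherited from the decay estimates on $u$ and the boundedness of $w$ away from its Riesz pole. The final step is to apply a Liouville-type theorem: $w(x)\to 0$ as $|x|\to\infty$ by dominated convergence and the integrability of $f(u)$, while the decay of $u$ at infinity forces $\phi(x)\to 0$ as $|x|\to\infty$; a triharmonic function on $\mathbb{R}^n$ vanishing at infinity is identically zero (e.g.\ by expanding in solid harmonics and using the growth constraint on each mode). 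Hence $u = \gamma_n^{-1}c_n w$, which is \eqref{globalintegralsystem} up to the claimed normalization.

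The main obstacle I anticipate is the integrability estimate in the first step, specifically the decay $u(x)\lesssim|x|^{6-n}$ at infinity. Classical arguments use the superharmonicity of $u$, $-\Delta u$, $(-\Delta)^2 u$, but establishing this chain in the absence of a direct maximum principle for $(-\Delta)^3$ is subtle and typically leans on Poisson-type integral representations on large balls, Jensen's inequality, and a careful bootstrap on the sign of each iterated Laplacian. The removable singularity at the origin in the third step is the secondary obstacle and is handled by a higher-order capacity argument, but it becomes routine once the needed $L^1_{\loc}$ and growth controls are in place.
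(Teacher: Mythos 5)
The paper itself does not prove this proposition: its proof is a one-line citation to \cite[Theorem~4.3]{MR2200258} (see also \cite{MR2131045}), so your self-contained argument is a genuinely different route, and in outline it retraces the classical strategy behind that citation (superharmonicity of the iterated Laplacians, finiteness of the Riesz potential, a polyharmonic difference, a Liouville step). However, as written it has a genuine gap at its quantitative core. The a priori decay $u(x)\lesssim |x|^{6-n}$ as $|x|\to\infty$, on which your first step rests, is false precisely for the solutions this proposition is applied to: singular solutions are of Emden--Fowler type and decay only like $|x|^{\frac{6-n}{2}}$ at infinity (and blow up at the same rate at the origin), and even for non-singular solutions such decay is essentially the conclusion of the classification, not an admissible hypothesis. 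What is true, and is all that is needed for the convergence of the Riesz integral when $n\geqslant 7$, is the global integrability $f(u)\in L^{1}(\mathbb{R}^n)$, i.e. $u\in L^{\frac{n+6}{n-6}}(\mathbb{R}^n)$, which the paper obtains by the cutoff test-function argument of Lemma~\ref{lm:integrability}; the superharmonicity statements you invoke are likewise established there (Lemmas~\ref{lm:superharmonicity1} and~\ref{lm:superharmonicity2}) without any pointwise decay.

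The same missing decay undermines your last two steps. Removing the singularity of $\phi=u-\gamma_n^{-1}c_n w$ at the origin requires a growth bound such as $\phi(x)=\mathrm{o}(|x|^{6-n})$ near $0$, and your Liouville step requires $\phi(x)\to 0$ as $|x|\to\infty$; you assert both are ``inherited from the decay estimates on $u$'', but no such estimates are available at this stage -- near the origin the blow-up rate of a non-removable singularity is unknown a priori, and at infinity the decay of $u$ is exactly what fails above -- so the argument is circular. The standard way to close this (and the one underlying the cited Chen--Li--Ou theorem) avoids pointwise decay altogether: one writes $u$ via the Green representation of $(-\Delta)^3$ on balls $B_R$, uses $-\Delta u\geqslant 0$ and $\Delta^2 u\geqslant 0$ together with positivity of $u$ to discard the boundary terms and obtain $u(x)\geqslant c\int_{\mathbb{R}^n}|x-y|^{6-n}f(u(y))\,\ud y$ uniformly in $R$, and then rules out a nontrivial triharmonic difference by an averaging/iteration argument that uses only the integrability of $f(u)$. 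If you replace your decay-based steps by that scheme -- or simply quote \cite[Theorem~4.3]{MR2200258} as the paper does, feeding it the integrability and superharmonicity proved in Section~\ref{sec:radialsymmetry} -- the proof of \eqref{globalintegralsystem} goes through.
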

	
	\begin{proof}
		It directly follows from \cite[Theorem~4.3]{MR2200258} (see also \cite{MR2131045}).		
	\end{proof}
	
	\subsection{Kelvin transform}
	Later we will employ the moving spheres technique, which is based on the {$2m$-order Kelvin transform}. 
	For this, given $x_0\in\mathbb R^n$ and $\mu>0$, we need to establish the concept of {inversion about a sphere} $\partial B_{\mu}(x_0)$, which is given by $\mathcal{I}_{x_0,\mu}(x)=x_0+\mathcal K_{x_0,\mu}(x)^2(x-x_0)$, where $\mathcal K_{x_0,\mu}(x)=\mu/|x-x_0|$. 
	
	\begin{definition}
		For any $u\in C^{6}(\mathbb R^n\setminus\{0\})$, let us consider the sixth order Kelvin transform about the sphere with center at $x_0\in\mathbb{R}^n$ and radius $\mu>0$ defined by 
		\begin{equation*}
			u_{x_0,\mu}(x)=\mathcal K_{x_0,\mu}(x)^{n-6}u\left(\mathcal{I}_{x_0,\mu}(x)\right).
		\end{equation*}
	\end{definition}
	
	The next proposition states that solutions to \eqref{ourPDE} are invariant under the Kelvin transform.
	This holds because of the conformal invariance enjoyed by this family of critical equations.
	\begin{proposition}\label{prop:conformalinvariance}
		If $u$ is a solution to \eqref{ourPDE}, then  $u_{x_0,\mu}$ is a solution to 
		\begin{equation*}
			(-\Delta)^{3}u_{x_0,\mu}=f(u_{x_0,\mu}) \quad {\rm in} \quad \mathbb R^{n}\setminus\{0,x_0\}.
		\end{equation*}
	\end{proposition}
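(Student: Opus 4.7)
The plan is to reduce the claim to the conformal covariance of the tri-Laplacian under inversions; once that is in hand, the criticality of the exponent does the rest. First I would translate so that $x_0 = 0$ and, by the obvious rescaling, further assume $\mu = 1$. Writing $I(x) := x/|x|^2$ and $\tilde u(x) := |x|^{-(n-6)} u(I(x))$, the heart of the matter is the pointwise identity
\begin{equation*}
(-\Delta)^3 \tilde u(x) \;=\; |x|^{-(n+6)}\,\bigl((-\Delta)^3 u\bigr)(I(x))
\end{equation*}
on the natural domain. Once this is established, substituting the equation $(-\Delta)^3 u = c_n\, u^{(n+6)/(n-6)}$ on the right and noting that $\bigl(|x|^{-(n-6)}\bigr)^{(n+6)/(n-6)} = |x|^{-(n+6)}$ produces exactly $(-\Delta)^3 \tilde u(x) = c_n \tilde u(x)^{(n+6)/(n-6)} = f(\tilde u(x))$, which is the proposition.

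The cleanest route to the pointwise identity is conformal-geometric. The inversion $I$ is a conformal diffeomorphism of $\mathbb R^n\setminus\{0\}$ onto itself with $I^* g_0 = \mathcal K_{0,1}^{\,4}\, g_0$, and the flat tri-Laplacian coincides with the sixth-order GJMS operator $P^6_{g_0}$ recalled in the introduction. The conformal covariance
\begin{equation*}
P^6_{g_0}(\phi w) \;=\; \phi^{(n+6)/(n-6)}\, P^6_{\hat g}(w),
\qquad \hat g = \phi^{4/(n-6)}\, g_0,
\end{equation*}
applied with $\phi = \mathcal K_{0,1}^{\,n-6}$ and $w = u\circ I$, combined with the diffeomorphism invariance $P^6_{I^* g_0}(u\circ I) = (P^6_{g_0} u)\circ I$, immediately yields the weighted formula above. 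Undoing the translation and rescaling restores the general case with parameters $(x_0,\mu)$.

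A purely analytic alternative would prove the general inversion identity
\begin{equation*}
(-\Delta)^m\bigl(|x|^{-(n-2m)}\, v(I(x))\bigr) \;=\; |x|^{-(n+2m)}\,\bigl((-\Delta)^m v\bigr)(I(x))
\end{equation*}
by writing $\Delta$ in polar coordinates and performing the change of variable $r\mapsto 1/r$, which interchanges the two radial weights while leaving the angular Laplacian intact; specializing to $m=3$ gives precisely what is needed. I do not foresee any serious obstacle beyond the bookkeeping of exponents: the proof is essentially the observation that the Kelvin weight $n-6$ and the critical exponent $(n+6)/(n-6)$ are tuned to one another, which is exactly what makes \eqref{ourPDE} critical and hence conformally invariant.
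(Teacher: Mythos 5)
Your argument is correct and is essentially the paper's: the paper's proof of this proposition is a one-line citation of Wei--Xu \cite{MR1679783}, whose content is exactly the Kelvin-transform covariance $(-\Delta)^3\bigl(|x|^{6-n}u(x/|x|^2)\bigr)=|x|^{-(n+6)}\bigl((-\Delta)^3u\bigr)(x/|x|^2)$ together with the exponent bookkeeping $(n-6)\cdot\frac{n+6}{n-6}=n+6$ that you carry out (via GJMS covariance or the polar-coordinate computation). The only point to keep straight is the domain: the transformed function satisfies the equation away from $x_0$ and away from the preimage $\mathcal{I}_{x_0,\mu}(0)$ of the singular point, which is what the proposition's (somewhat loosely written) exceptional set is meant to capture, and your phrase ``on the natural domain'' handles this adequately.
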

	
	\begin{proof}
		It directly follows by using \cite{MR1679783}.
	\end{proof}
	
	\subsection{Radial Emden--Fowler coordinates}\label{subsec:changeofvariables}
	This is section is devoted to constructing a change of variables that transforms the singular PDE \eqref{ourPDE} problem into the nice ODE problem with constant coefficients.
	Here we only consider functions that are radially symmetric, that is, $u=u(r)$ with $r=|x|$.
	Later on, we will prove that this in fact holds for positive singular solutions to \eqref{ourPDE} (see Proposition~\ref{prop:radialsymmetry}). 
	
	\begin{definition}
		For any $u\in C^{6}(\mathbb{R}^n\setminus\{0\})$ positive $($radial$)$ solution to \eqref{ourPDE}, we define the $($sixth order$)$ Emden--Fowler $($or logarithmic-cylindrical$)$ change of variables given by
		\begin{equation*}
			v(t)=|x|^{\frac{6-n}{2}}u(r) \quad {\rm with} \quad t=\ln r.
		\end{equation*}
		The power $\gamma_{n}:=\frac{6-n}{2}$ is chosen by conformal invariance.
	\end{definition}
	
	This change of variables is used to transform the singular PDE \eqref{ourPDE} problem into the nice ODE problem with constant coefficients 
	\begin{equation}\tag{$\mathcal{O}_{6}$}\label{ourODE}
		v^{(6)}-K_4v^{(4)}+K_2v^{(2)}+g(v)=0 \quad {\rm in} \quad \mathbb R.
	\end{equation}
	The nonlinear term is $g: C^{6}(\mathbb R)\rightarrow C^6(\mathbb R)$ is defined as
	\begin{equation*}
		g(v):=f(v)-K_0v \quad {\rm where} \quad f(v):={c}_n|v|^{\frac{12}{n-6}}v.
	\end{equation*}
	The operator $P^3_{\rm rad}:C^{6}(\mathbb R)\rightarrow C(\mathbb R)$ is the so-called (sixth order) logarithmic cylindrical Paneitz operator (restricted to radial functions), and is given by
	\begin{equation}\label{cylindricaloperator}
		P^3_{\rm rad}=\partial_t^{(6)}-K_4\partial_t^{(4)}+K_2\partial_t^{(2)}-K_0,
	\end{equation}
	where
	\begin{align}\label{coefficients}\nonumber
		K_0&=\frac{1}{64}(n-6)^2(n-2)^2(n+2)^2,\\ K_2&=\frac{1}{16}(3n^4-24n^3+72n^2-96n+304),\\\nonumber 
		K_4&=\frac{1}{4}(3n^2-12n+44).
	\end{align}
	
	\subsection{Decomposition}\label{subsec:decomposition}
	We prove a decomposition result for the tri-Laplacian operator written in Emden--Fowler coordinates.
	Namely, we decompose $P^3_{\rm rad}$ into a composition of three second order operators satisfying a comparison principle.
	In what follows, given $\lambda>0$, we denote $L_{\lambda}:=\partial_t^{(2)}-\lambda$.
	
	We start with the definition of indicial roots for a linear operator.
	\begin{definition}
		Let ${L}\in (C^{6}(\mathbb R))^{\prime}$ be a linear operator.
		The indicial roots of $L$ at $+\infty$ $($resp. $-\infty$$)$ are $\lambda\in\mathbb R$ for which there is a non-zero function $v \in {C}^{6}(\mathbb R)$ and $\lambda^{\prime}<\lambda$ $($resp. $\lambda^{\prime}>\lambda$$)$ such that $e^{-\lambda^{\prime} t} L(e^{\lambda t}v(t))\rightarrow0$ as $t\rightarrow+\infty$ $($resp. $t\rightarrow-\infty$$)$. We denote by $\mathcal{I}(L)\subset\mathbb R$ its set of indicial roots.
	\end{definition}
	
	\begin{remark}
		Notice that when ${L}\in (C^{6}(\mathbb R))^{\prime}$ has constant coefficients, that is, $L=\sum_{j=0}^{6}k_j\partial_t^{(j)}$, it follows that $\mathcal{I}(L)=\{p_{L}(0)\}^{-1}$, where $p_{L}(\lambda)=\sum_{j=0}^{6}k_j\lambda^{j}$ is the indicial polynomial.
		In particular, a direct computation shows that the indicial equation associated to \eqref{cylindricaloperator} are given by
		\begin{equation}\label{indicialequation}
			p_{\lambda}(P^3_{\rm rad})=\lambda^6-K_4\lambda^4+K_2\lambda^2-K_0=0,
		\end{equation}
		which have a strictly positive discriminant, that is,
		\begin{equation}\label{discriminant}
			{\rm disc}_{\lambda}(p):=-27K_0^2-4K_2^2-4K_4^2+K_4^2K_2^2+18K_4K_2K_0>0.
		\end{equation}
		This in turns guarantees the existence of distinct real roots $\pm\sqrt{\lambda_1},\pm\sqrt{\lambda_2},\pm\sqrt{\lambda_3}\in\mathbb R$ solving \eqref{indicialequation}.
	\end{remark}
	
	Next, we prove a decomposition of $P^3_{\rm rad}$ into three second order operators.
	
	\begin{proposition}\label{prop:decomposition}
		The following decomposition holds
		\begin{equation}\label{decomposition}
			P^3_{\rm rad}=L_{\lambda_1}\circ L_{\lambda_2}\circ L_{\lambda_3},
		\end{equation}
		where
		\begin{equation}\label{indicialroots}
			\lambda_1=\frac{n-6}{2}, \quad  \lambda_2=\frac{n-2}{2}, \quad {\rm and} \quad \lambda_3=\frac{n+2}{2}.
		\end{equation}
	\end{proposition}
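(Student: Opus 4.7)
My plan is a direct coefficient-matching argument. Since every operator $L_\mu=\partial_t^{(2)}-\mu$ is a constant-coefficient polynomial in $\partial_t$ and such operators mutually commute, the triple composition on the right-hand side of \eqref{decomposition} is independent of the ordering of factors and equals, after expansion,
$$L_{\mu_1}\circ L_{\mu_2}\circ L_{\mu_3}=\partial_t^{(6)}-(\mu_1+\mu_2+\mu_3)\,\partial_t^{(4)}+(\mu_1\mu_2+\mu_1\mu_3+\mu_2\mu_3)\,\partial_t^{(2)}-\mu_1\mu_2\mu_3.$$
Matching this with $P^{3}_{\rm rad}=\partial_t^{(6)}-K_4\partial_t^{(4)}+K_2\partial_t^{(2)}-K_0$ via Vieta's formulas, I reduce the statement to the assertion that $\mu_1,\mu_2,\mu_3$ are the three roots of the cubic $\mu^{3}-K_{4}\mu^{2}+K_{2}\mu-K_{0}$.

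Next, I would observe that this cubic coincides, under the substitution $\mu=\lambda^{2}$, with the indicial polynomial in \eqref{indicialequation}, so its roots are precisely the squares of the positive indicial roots of $P^{3}_{\rm rad}$. From the data in \eqref{indicialroots}, these squares are $\bigl(\tfrac{n-6}{2}\bigr)^{2}$, $\bigl(\tfrac{n-2}{2}\bigr)^{2}$, and $\bigl(\tfrac{n+2}{2}\bigr)^{2}$, so the proof comes down to three short algebraic identities in $n$, all verifiable at once from \eqref{coefficients}. For instance, the simplest one,
$$\Bigl(\tfrac{n-6}{2}\Bigr)^{2}+\Bigl(\tfrac{n-2}{2}\Bigr)^{2}+\Bigl(\tfrac{n+2}{2}\Bigr)^{2}=\tfrac{3n^{2}-12n+44}{4}=K_{4},$$
is immediate, and the pairwise product sum and triple product identities (which yield $K_{2}$ and $K_{0}$, respectively) are routine polynomial expansions.

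As a conceptual cross-check, and to avoid relying solely on arithmetic, I would note that this decomposition can alternatively be read off from the GJMS factorization $P^{6}_{g_{0}}=(-\Delta_{g_{0}}+c_{1})(-\Delta_{g_{0}}+c_{2})(-\Delta_{g_{0}}+c_{3})$ recorded in the introduction. Pulling each second order conformal Laplacian on the round sphere through the Emden--Fowler change of variables introduced in Section~\ref{subsec:changeofvariables} and restricting to radial functions turns each factor into exactly one of the $L_{\mu_{j}}$ above. This provides a geometric origin for the three elementary symmetric identities, but neither route presents any real obstacle: the computation is purely algebraic, and the main content of the proposition is simply the observation that the positivity of the discriminant \eqref{discriminant} forces the three real roots $\mu_{j}=\lambda_{j}^{2}$ to exist and to be distinct, so that \eqref{decomposition} is a genuine factorization into three (commuting) second order operators.
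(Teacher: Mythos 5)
Your argument is correct in substance, but it reaches the factorization by a different and more self-contained route than the paper. The paper's proof is analytic: it lists the radial homogeneous solutions $|x|^{q_j}$ of the tri-harmonic equation via the formula for $(-\Delta)^3|x|^{q}$, converts them to exponentials under the Emden--Fowler change of variables, reads off the indicial roots of $P^3_{\rm rad}$, and then declares \eqref{decomposition} ``straightforward to check''; you instead expand the composition of the three second order factors, invoke Vieta, and verify the elementary symmetric function identities directly against the explicit constants in \eqref{coefficients}. The gain of your route is that it actually pins down the zeroth-order coefficients of the factors: your computation shows they must be the squares $\left(\tfrac{n-6}{2}\right)^{2}$, $\left(\tfrac{n-2}{2}\right)^{2}$, $\left(\tfrac{n+2}{2}\right)^{2}$, i.e.\ \eqref{decomposition} holds with $L_{\lambda_j}$ read as $\partial_t^{(2)}-\lambda_j^{2}$ (equivalently with the $\lambda_j$ of \eqref{indicialroots} replaced by their squares); under the paper's literal convention $L_{\lambda}=\partial_t^{(2)}-\lambda$ one would need $\lambda_1+\lambda_2+\lambda_3=\tfrac{3n-6}{2}$ to equal $K_4$, which fails, so the statement requires exactly the reading you adopted. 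The paper's own proof carries the same slip (it writes the kernel elements as $e^{\pm\sqrt{\lambda_j}t}$ although the exponents produced by the change of variables are $\pm\lambda_j$), and what it buys instead is an explanation of where the roots come from. Two points to tighten in your write-up: you verify only the $K_4$ identity and should record the $K_2$ and $K_0$ ones as well (they do check out, e.g.\ $\lambda_1^{2}\lambda_2^{2}\lambda_3^{2}=\tfrac{1}{64}(n-6)^2(n-2)^2(n+2)^2=K_0$); and the GJMS cross-check is only heuristic, since the individual factors of $P^6_{g_0}$ are not separately conformally covariant, so ``pulling each factor through'' the change of variables would need justification --- harmless here, because your coefficient-matching argument is complete on its own.
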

	
	\begin{proof}
		Initially, recall that for $q>6-n$, the following formula holds in the sense of distributions,
		\begin{equation*}
			(-\Delta)^3(|x|^{q})=\prod_{j=0}^{3}(q-2j)\prod_{j=1}^{3}(q-2j+n)|x|^{q-6} \quad {\rm in} \quad \mathbb R^n\setminus\{0\},
		\end{equation*}
		which implies that $\{u_j\}_{j\in\{1,\dots,6\}}\subset C^{6}(\mathbb R^n\setminus\{0\})$ is a basis for the space of radial solutions to the homogeneous tri-harmonic equation, where $u_j(|x|)=|x|^{q_j}$ with $q_j=2(j-1)$ if $j=1,2,3$ and $2(j-m)-n$ if $j=4,3,5$.
		In other terms, $(-\Delta)^3 u_j=0$ in $\mathbb R^n\setminus\{0\}$ for any $j=1,2,3,4,5,6$.
		Hence, under the Emden--Fowler change of variables, we have that $\{v^\pm_1,v^\pm_2,v^\pm_3\}\subset C^{6}(\mathbb R)$, where $v^{\pm}_j(t)=e^{\pm\sqrt{\lambda_j} t}$ with $\lambda_j=q_j-\gamma_{n}$ for $j=1,2,3$, forms a basis for the space of solutions to \eqref{ourODE}, that is, $\{v^\pm_1,v^\pm_2,v^\pm_3\}\subset C^{6}(\mathbb R)$ is linearly independent and $P^3_{\rm rad} v^{\pm}_1=P^3_{\rm rad} v^{\pm}_2=P^3_{\rm rad} v^{\pm}_3=0$ in $\mathbb R$.
		Then, the set of indicial roots of $\mathcal{I}(P^3_{\rm rad})=\{\pm\sqrt{\lambda_1},\pm\sqrt{\lambda_2},\pm\sqrt{\lambda_3}\}\subset \mathbb R$.
		From this, it is straightforward to check that \eqref{decomposition} holds.
	\end{proof}

	\subsection{Conservation of energy}\label{subsec:conservation}
	In this section, we find a quantity that is conserved along solutions to \eqref{ourODE}.
	These will be called the Hamiltonian energy associated with the sixth order ODE \eqref{ourODE}.
	
	Let us start with the classical definition of Hamiltonian energy for a solution $v\in C^6(\mathbb R)$ to \eqref{ourODE}, which is obtained by multiplying \eqref{ourODE} by $v^{(1)}$ and integrating by parts. 
	\begin{definition}
		For any real function $v\in C^{6}(\mathbb R)$, let us define its Hamiltonian energy $($with respect to \eqref{ourODE}$)$ $\mathcal{H}:\mathbb R\times C^{6}(\mathbb R)\rightarrow\mathbb R$ by
		\begin{equation}\label{hamiltonianenergy}
			\mathcal{H}(t,v):=\left(v^{(5)}v^{(1)}-v^{(4)}v^{(2)}+\frac{1}{2}{v^{(3)}}^2\right)-{K_4}\left(v^{(3)}v^{(1)}-\frac{1}{2}{v^{(2)}}^{2}\right)+\frac{K_2}{2}{v^{(1)}}^2-\frac{K_0}{2}v^2+F(v),
		\end{equation}
		where
		\begin{equation*}
			G(v):=F(v)-\frac{K_0}{2}v^2
			\quad {\rm and} \quad F(v):=\hat{c}_n|v|^{\frac{2n}{n-6}}
		\end{equation*}
		and
		\begin{equation*}
			\hat{c}_{n}:=\frac{(n-6)^2(n^4-20n^2+64)}{128}.
		\end{equation*}
	\end{definition}
	
	\begin{remark}
		Notice that the real roots of $G$ are the equilibrium solutions to \eqref{ourODE}, namely
		\begin{equation}
			v\equiv0 \quad {\rm and} \quad v\equiv\pm a^{*}_n,
		\end{equation}
		where
		\begin{equation}\label{constantsolutions}
			a^{*}_n:=K_0^{\frac{n-6}{12}}=\left(\frac{(n-6)(n-2)(n+2)}{8}\right)^{\frac{n-6}{6}}.
		\end{equation}
	\end{remark}
	
	Next, it is direct to prove that this energy is conserved along with solutions to \eqref{ourODE}. 
	We emphasize that this conservation is a local property and valid on the maximal interval of existence and does not require any {\it a priori} boundedness assumption.
	\begin{proposition}\label{prop:conservation}
		If $v\in C^{6}(\mathbb R)$ is a positive solution to \eqref{ourODE}, then 
		\begin{equation*}
			\dfrac{\partial}{\partial t}\mathcal{H}(t,v)\equiv0.
		\end{equation*}
		In other terms, there exists $\mathcal H_{v}\in\mathbb R$ such that
		\begin{equation}\label{conservationofenergy}
			\mathcal{H}(v)(t)=\mathcal{H}(t,v)\equiv\mathcal{H}(v):=\mathcal{H}_{v}.
		\end{equation}
	\end{proposition}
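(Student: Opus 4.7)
The plan is simply to differentiate $\mathcal H(t,v(t))$ term by term with respect to $t$ and verify that, after using the ODE \eqref{ourODE}, the derivative vanishes identically. This is the standard route for establishing a conservation law associated to an autonomous higher order equation of Euler--Lagrange type, and the particular form of $\mathcal H$ given in \eqref{hamiltonianenergy} is precisely the Ostrogradsky Hamiltonian of the Lagrangian whose Euler--Lagrange equation is \eqref{ourODE}. Before starting the computation, I would record the identity $F'(v) = f(v)$, which is equivalent to $\hat c_n\cdot\frac{2n}{n-6} = c_n$ and to $\frac{2n}{n-6}-1 = \frac{n+6}{n-6}$; both are direct algebraic checks from the definitions of $c_n$ and $\hat c_n$. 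Consequently, the map $v\mapsto -\tfrac{K_0}{2}v^2 + F(v)$ is a primitive of $g$.

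The structural observation that makes the computation short is that the two \emph{blocks} appearing in $\mathcal H$ telescope when differentiated. Concretely, by Leibniz's rule
\begin{equation*}
\frac{d}{dt}\Bigl[v^{(5)}v^{(1)} - v^{(4)}v^{(2)} + \tfrac{1}{2}(v^{(3)})^{2}\Bigr]
= v^{(6)}v^{(1)} + v^{(5)}v^{(2)} - v^{(5)}v^{(2)} - v^{(4)}v^{(3)} + v^{(3)}v^{(4)} = v^{(6)}v^{(1)},
\end{equation*}
and analogously
\begin{equation*}
\frac{d}{dt}\Bigl[v^{(3)}v^{(1)} - \tfrac{1}{2}(v^{(2)})^{2}\Bigr]
= v^{(4)}v^{(1)} + v^{(3)}v^{(2)} - v^{(2)}v^{(3)} = v^{(4)}v^{(1)}.
\end{equation*}
These pairwise cancellations of the intermediate cross terms are the reason for the asymmetric sign choices in \eqref{hamiltonianenergy}.

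Combining these identities with the elementary derivatives of $\tfrac{K_2}{2}(v^{(1)})^{2}$, $-\tfrac{K_0}{2}v^{2}$, and $F(v)$, I would collect
\begin{equation*}
\partial_t \mathcal H(t,v(t)) = v^{(1)}\Bigl[v^{(6)} - K_4 v^{(4)} + K_2 v^{(2)} - K_0 v + f(v)\Bigr] = v^{(1)}\Bigl[v^{(6)} - K_4 v^{(4)} + K_2 v^{(2)} + g(v)\Bigr],
\end{equation*}
which is zero by \eqref{ourODE}. Integrating this pointwise identity on the maximal interval of existence of $v$ produces the constant $\mathcal H_v\in\mathbb R$ claimed in \eqref{conservationofenergy}.

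There is essentially no real obstacle here: the positivity hypothesis on $v$ is never used (the argument is local and purely differential), and the only genuine work is algebraic bookkeeping of the cross terms together with the verification that $F$ as defined is an antiderivative of $f$. The only place where mild care is needed is the absolute value in $F(v)=\hat c_n|v|^{2n/(n-6)}$, since $\frac{d}{dv}|v|^{\alpha} = \alpha |v|^{\alpha-2}v$ valid for $v\neq 0$; in the setting of the proposition $v>0$, so no sign issue arises.
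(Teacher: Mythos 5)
Your proof is correct and is exactly the computation the paper has in mind: the paper's proof of Proposition~\ref{prop:conservation} is simply ``It is a direct computation,'' and you have carried out that computation, with the telescoping of the cross terms, the check that $\hat c_n\tfrac{2n}{n-6}=c_n$ so that $F'=f$, and the observation that positivity is not needed all being accurate.
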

	
	\begin{proof}
		It is a direct computation.
	\end{proof}
	
	\begin{remark}
		It is convenient to write the Hamiltonian energy as
		\begin{align}\label{energyfactorized}
			\mathcal{H}(v)=\frac{1}{2}{v^{(3)}}^2+\mathcal{E}_2(v)v^{(2)}+\mathcal{E}_1(v)v^{(1)}+G(v),         
		\end{align}
		where                 \begin{align}\label{auxiliarysignfunctions}
			\mathcal{E}_1(v):=v^{(5)}-{K_{4}}v^{(3)}+\frac{K_{2}}{2}v^{(1)} \quad {\rm and} \quad  \mathcal{E}_2(v):=-v^{(4)}+\frac{K_{4}}{2}v^{(2)}
		\end{align}
		are the so-called auxiliary energy summand functions.
	\end{remark}
	
	\section{Radial symmetry}\label{sec:radialsymmetry}
	In this section, using the integral moving spheres technique, we prove that singular solutions to \eqref{ourPDE} are radially symmetric.
	The main ingredient in the proof is the integral version of the moving spheres technique contained in \cite{MR2479025,MR3694645,MR2055032,arxiv:1901.01678}.
	For this, we are based on the conformal invariance enjoyed by solutions to \eqref{ourPDE} in Proposition~\ref{prop:conformalinvariance}.
	
	\begin{proposition}\label{prop:radialsymmetry}
		Let $u\in C^{6}(\mathbb R^n\setminus\{0\})$ be a positive singular solution to equation \eqref{ourPDE}. Then, $u$ is radially symmetric about the origin and monotonically decreasing.
	\end{proposition}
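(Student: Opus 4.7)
The plan is to run the integral version of the moving spheres method of Chen--Li--Ou directly on the equivalent formulation \eqref{globalintegralsystem}, using the conformal invariance in Proposition~\ref{prop:conformalinvariance} together with the fundamental identity
\begin{equation*}
|\mathcal{I}_{x_0,\mu}(x) - y| \; = \; \frac{\mu}{|y - x_0|}\, |x - \mathcal{I}_{x_0,\mu}(y)|,
\end{equation*}
which links the Riesz kernel $|x-y|^{6-n}$ to its Kelvin pullback. For $x_0 \neq 0$ and $\mu > 0$, set $\Sigma_{x_0,\mu} := \mathbb{R}^n \setminus \overline{B_\mu(x_0)}$. Subtracting the integral equation satisfied by $u_{x_0,\mu}$ from that satisfied by $u$ and splitting the integral at the sphere $\partial B_\mu(x_0)$, one rewrites the difference $w_{x_0,\mu} := u - u_{x_0,\mu}$ on $\Sigma_{x_0,\mu}$ as a convolution, against a strictly positive kernel, of $f(u) - f(u_{x_0,\mu})$, so that a Hardy--Littlewood--Sobolev bound gives an estimate of the form $\|w_{x_0,\mu}^-\|_{L^{2^{\#}}(\Sigma_{x_0,\mu})} \leq C\|u\|_{L^{2^{\#}}(\Sigma_{x_0,\mu})}^{4/(n-6)}\|w_{x_0,\mu}^-\|_{L^{2^{\#}}(\Sigma_{x_0,\mu})}$.

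I would then carry out the scheme in three steps. \textbf{Starting the procedure:} for $\mu$ small depending on $x_0$, the $L^{2^\#}$-mass of $u$ on $\Sigma_{x_0,\mu}$ is so small that the constant in the above estimate is $<1$, forcing $w_{x_0,\mu}^- \equiv 0$ and hence $u \geq u_{x_0,\mu}$ on $\Sigma_{x_0,\mu}$. \textbf{Sliding:} define the critical radius
\begin{equation*}
\bar\mu(x_0) := \sup\bigl\{\mu>0 : u \geq u_{x_0,\nu}\ \text{on}\ \Sigma_{x_0,\nu}\ \text{for all } \nu \in (0,\mu)\bigr\}.
\end{equation*}
A continuity-plus-contraction step at $\mu = \bar\mu(x_0)$ shows that either the moving sphere can be strictly pushed further (contradicting the definition) or the equality $u \equiv u_{x_0,\bar\mu(x_0)}$ holds throughout $\Sigma_{x_0,\bar\mu(x_0)}$.

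\textbf{Using the singularity.} If $\bar\mu(x_0) = \infty$ for some $x_0 \neq 0$, one lets $\mu \to \infty$ in the comparison $u \geq u_{x_0,\mu}$ and uses the decay of $u$ at infinity from \eqref{globalintegralsystem} to force $u$ to remain bounded near the origin, contradicting the non-removability of the singularity. Hence $\bar\mu(x_0) < \infty$ for every $x_0 \neq 0$, and $u$ is invariant under the Kelvin inversion $\mathcal{I}_{x_0,\bar\mu(x_0)}$. Since $u$ is singular only at $0$, so is $u \circ \mathcal{I}_{x_0,\bar\mu(x_0)}$, forcing $\mathcal{I}_{x_0,\bar\mu(x_0)}(0) = 0$, i.e.\ $\bar\mu(x_0) = |x_0|$. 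A standard calculus lemma then shows that a function fixed by every inversion about a sphere passing through $0$ must be radial about $0$, and tracking signs along the sliding process yields monotone decrease in $r = |x|$.

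The main obstacle will be the careful bookkeeping in the integral splitting (producing a kernel with the correct sign) and the rigorous execution of the equality case in the sliding step, since no classical maximum principle is available for $(-\Delta)^3$ and one must rely entirely on sharp kernel estimates for the Riesz potential of order $6$ together with the critical matching of exponents in the Hardy--Littlewood--Sobolev inequality. The dichotomy between divergent and finite critical radius, where the non-removable singularity at the origin enters decisively (and distinguishes this case from Theorem~\ref{thm:chen-lin-ou}), is by comparison straightforward.
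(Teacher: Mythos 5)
Your skeleton is the same as the paper's: pass to the integral formulation \eqref{globalintegralsystem}, run the moving spheres in integral form, and conclude via the standard calculus lemma (the paper invokes \cite[Proposition~2.1]{MR2479025}); your observation that a sphere engulfing the origin would bound $u$ near $0$ by its values at a regular point, forcing the critical radius to stop at $|x_0|$, is also consistent with how the singularity enters. The genuine gap is in the analytic engine you propose, the Hardy--Littlewood--Sobolev contraction. You claim $\|w_{x_0,\mu}^-\|_{L^{2^{\#}}(\Sigma_{x_0,\mu})}\leqslant C\|u\|_{L^{2^{\#}}(\Sigma_{x_0,\mu})}^{4/(n-6)}\|w_{x_0,\mu}^-\|_{L^{2^{\#}}(\Sigma_{x_0,\mu})}$ with a prefactor that is small for small $\mu$. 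But $\Sigma_{x_0,\mu}$ is the \emph{exterior} of $B_\mu(x_0)$, which grows as $\mu\to0$, so its $u$-mass cannot be small; in the Chen--Li--Ou scheme the smallness must sit on the Kelvin side, i.e.\ on $\|u_{x_0,\mu}\|_{L^{2^{\#}}}$ over the bad set, which by inversion is a norm of $u$ over a subset of $B_\mu(x_0)$. Worse, for the solutions actually being classified one has $u\sim|x|^{(6-n)/2}$ near the non-removable singularity and at infinity, so $u^{2n/(n-6)}\sim|x|^{-n}$ and $\|u\|_{L^{2^{\#}}(\Sigma_{x_0,\mu})}=\infty$ (note $0\in\Sigma_{x_0,\mu}$ for $\mu<|x_0|$); the only a priori integrability available is $u\in L^{\frac{n+6}{n-6}}(\mathbb R^n)$ from Lemma~\ref{lm:integrability}, which is strictly weaker. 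The same obstruction undermines your ``continuity-plus-contraction'' step at the critical radius: since the expected critical sphere passes through the origin, the reflected region abuts the singularity and absolute continuity of $\int u^{2^{\#}}$ is unavailable exactly where you need it. (Minor: the Lipschitz exponent of $f$ gives $12/(n-6)$, not $4/(n-6)$.)

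The paper closes these two points by different means, with no $L^{2^{\#}}$ norms of $u$ anywhere: it starts the spheres by first proving the superharmonicity properties $-\Delta u\geqslant0$ and $\Delta^2u\geqslant0$ (Lemmas~\ref{lm:superharmonicity1}--\ref{lm:superharmonicity2}), which yield $\mu^*(x)>0$, and it handles the critical radius pointwise, using the positivity and boundary behavior of the kernel $E(x,y,z,\mu)$ in \eqref{kernelkelvintransform} together with a $\liminf_{|z|\to\infty}|z|^{n-6}\left(u-(u)_{\mu^*}\right)(z)>0$ estimate (Lemma~\ref{lm:movingspheres}). If you want to keep the HLS route you must redo the bookkeeping so that all critical norms are taken over bounded reflected regions staying away from the origin, and supply a separate argument when the reflected region approaches the singularity; otherwise adopt the paper's superharmonicity-plus-pointwise-kernel argument. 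Finally, note that moving spheres alone gives radial symmetry and non-increase; the strict monotonic decrease asserted in the proposition is obtained in the paper from the ODE analysis (Proposition~\ref{prop:odeclassification}, item (iii)), so your ``tracking signs along the sliding'' needs that supplement as well.
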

	
	Before we give the proof of the rotational symmetry for singular solutions to \eqref{ourPDE}, we need to establish a set of preliminary results.
	
	\subsection{Integrability}\label{subsec:integrability}
	We show that any positive singular solution to \eqref{ourPDE} is distributional.
	
	\begin{lemma}\label{lm:integrability}
		Let $u\in C^{6}(\mathbb R^n\setminus\{0\})$ be a positive singular solution to \eqref{ourPDE}. Then, it holds $u \in L^{\frac{n+6}{n-6}}(\mathbb{R}^n)$. 
		In particular, $u$ is a
		distribution solution to \eqref{ourPDE}.
	\end{lemma}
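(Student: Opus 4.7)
The strategy is to first establish the pointwise a priori bound $u(x)\leqslant C|x|^{(6-n)/2}$ in a punctured neighborhood of the origin; integrability of $u^p$ on any ball is then a direct calculation, which together with smoothness away from $0$ yields the stated integrability, and the distributional statement follows by a cutoff-and-integration-by-parts argument.

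First, pass to the spherical average $\bar u(r):=|\partial B_r|^{-1}\int_{\partial B_r}u\,\ud\sigma$, which is smooth on $(0,\infty)$ since $u\in C^{6}(\mathbb R^n\setminus\{0\})$. Averaging \eqref{ourPDE} and using Jensen's inequality on the convex nonlinearity gives the radial differential inequality $(-\Delta)_{\mathrm{rad}}^{3}\bar u\geqslant c_n\bar u^{(n+6)/(n-6)}$ on $(0,\infty)$. Performing the Emden--Fowler change of variables from Section~\ref{subsec:changeofvariables}, the critical scaling produces the autonomous inequality $P^3_{\mathrm{rad}}V\geqslant c_nV^{(n+6)/(n-6)}$ on $\mathbb R$ for the transformed function $V$, where $P^3_{\mathrm{rad}}$ is the operator defined in \eqref{cylindricaloperator}.

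Second, I would prove that $V$ is uniformly bounded on $\mathbb R$ by exploiting the decomposition $P^3_{\mathrm{rad}}=L_{\lambda_1}\circ L_{\lambda_2}\circ L_{\lambda_3}$ from Proposition~\ref{prop:decomposition}. Each factor $L_{\lambda_i}=\partial_t^{(2)}-\lambda_i$ with $\lambda_i>0$ satisfies a second-order maximum principle on $\mathbb R$ and admits an exponentially decaying Green's function. A Phragm\'en--Lindel\"of / barrier argument iterated through the three factors, together with a comparison against the constant equilibrium $a^{*}_n$ from \eqref{constantsolutions}, forces $V(t)\leqslant C_n$ for all $t\in\mathbb R$. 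Unwinding the change of variables yields $\bar u(r)\leqslant Cr^{(6-n)/2}$ for $r\in(0,1]$, and a local Harnack inequality for $u$ on dyadic annuli $\{\sigma<|x|<2\sigma\}$ upgrades this spherical-average bound to the pointwise estimate $u(x)\leqslant C|x|^{(6-n)/2}$ on $B_{1}\setminus\{0\}$; the analogous control at infinity is obtained by applying the same machinery to the Kelvin transform $u_{0,1}$, which by Proposition~\ref{prop:conformalinvariance} solves the same PDE.

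Third, the pointwise bound gives
\begin{equation*}
\int_{B_1\setminus\{0\}}u^{(n+6)/(n-6)}\,\ud x\leqslant C\int_{0}^{1}r^{(n-8)/2}\,\ud r<\infty
\end{equation*}
because $n\geqslant 7$, and smoothness on compact subsets of $\mathbb R^{n}\setminus\{0\}$ combined with the Kelvin-transform estimate near infinity yields the claimed integrability on $\mathbb R^n$. For the distributional statement, I would pair \eqref{ourPDE} with $\varphi\in C_c^{\infty}(\mathbb R^{n})$ using cutoffs $\eta_\varepsilon$ vanishing on $B_\varepsilon$, integrate by parts six times, and let $\varepsilon\to 0$: Schauder estimates combined with the pointwise bound give $|\nabla^{k}u(x)|\leqslant C|x|^{(6-n)/2-k}$ for $k\leqslant 5$, so the boundary contributions on $\partial B_\varepsilon$ are of order $\varepsilon^{(n-6)/2}\to 0$ since $n\geqslant 7$. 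The main obstacle is the boundedness of $V$: there is no direct maximum principle for the sixth-order operator $P^3_{\mathrm{rad}}$, and the signs of the intermediate quantities $L_{\lambda_3}V$ and $L_{\lambda_2}L_{\lambda_3}V$ are not known a priori. Controlling these via iterated applications of the second-order comparison principle for the three factors, while ruling out exponentially growing modes of $V$ at $\pm\infty$ by exploiting positivity of $V$ and the super-linear right-hand side, is where I expect most of the technical work to go.
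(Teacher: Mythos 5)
Your proposal does not follow the paper's route, and the route you choose has a decisive gap that you yourself flag but do not close. Everything funnels through the pointwise bound $u(x)\leqslant C|x|^{(6-n)/2}$, which you want to obtain from (a) boundedness of the Emden--Fowler transform $V$ of the spherical average and (b) a Harnack inequality on dyadic annuli. Neither ingredient is available at this stage. For (a), the spherical average only satisfies the differential \emph{inequality} $P^3_{\rm rad}V\geqslant c_nV^{\frac{n+6}{n-6}}$, so the Hamiltonian conservation (Proposition~\ref{prop:conservation}) that the paper uses to prove boundedness of exact ODE solutions (Lemma~\ref{lm:boundededness}) does not apply, and the iterated second-order comparison through $L_{\lambda_1}\circ L_{\lambda_2}\circ L_{\lambda_3}$ needs sign information on $L_{\lambda_3}V$ and $L_{\lambda_2}L_{\lambda_3}V$ that you admit is unknown; this is exactly the hard step, and it is left open. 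For (b), there is no maximum principle for $(-\Delta)^3$, and the natural source of a Harnack-type estimate for $u$ is the superharmonicity $-\Delta u\geqslant0$, $\Delta^2u\geqslant0$ of Lemmas~\ref{lm:superharmonicity1} and \ref{lm:superharmonicity2} --- but in the paper those are proved \emph{after}, and by means of, the present lemma (their proofs start from the distributional formulation and the local integrability), so as organized your argument is circular.

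Moreover, even granting the pointwise bound, your conclusion at infinity fails: $u\leqslant C|x|^{(6-n)/2}$ gives $u^{\frac{n+6}{n-6}}\leqslant C|x|^{-\frac{n+6}{2}}$, and $\int_{\{|x|>1\}}|x|^{-\frac{n+6}{2}}\,\ud x$ diverges for every $n\geqslant7$; indeed the singular Emden--Fowler solutions have exactly this decay, so the Kelvin-transform step cannot deliver integrability of $u^{\frac{n+6}{n-6}}$ on exterior domains --- the substance of the lemma, and all that is used later, is integrability \emph{across the origin} together with the distributional identity. The paper proves precisely this by a short, self-contained capacity-type argument: test \eqref{ourPDE} against $\xi_\varepsilon=\eta_\varepsilon^{\frac{n+6}{2}}$, use the estimate $|(-\Delta)^3\xi_\varepsilon|\leqslant C\varepsilon^{-6}\xi_\varepsilon^{\frac{n-6}{n+6}}$ on the annulus $\{\varepsilon\leqslant|x|\leqslant2\varepsilon\}$ and H\"older's inequality to absorb, obtaining a bound on $\int f(u)\xi_\varepsilon$ uniform in $\varepsilon$; the distributional statement then follows from the same cutoff computation, with the error term controlled by H\"older and the just-proved integrability, requiring no pointwise bound, no derivative estimates, no radial symmetry and no ODE analysis. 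You should either adopt that argument or supply complete proofs of the boundedness of $V$ and of a sixth-order Harnack inequality that do not rely on results proved downstream of this lemma.
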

	
	\begin{proof}
		For any $0<\varepsilon \ll 1$, let us consider $\eta_{\varepsilon}\in C^{\infty}(\mathbb{R}^{n})$ with $0\leqslant\eta_{\varepsilon}\leqslant1$ satisfying
		\begin{equation}\label{cutoff}
			\eta_{\varepsilon}(x)=
			\begin{cases}
				0, & \mbox{if} \ |x| \leqslant \varepsilon\\
				1, & \mbox{if} \ |x| \geqslant 2 \varepsilon,
			\end{cases}
		\end{equation}
		and $|D^{(j)} \eta_{\varepsilon}| \leqslant C_0 \varepsilon^{-j}$ in $\mathbb{R}^{n}$ for $j=1,2,3,4,5,6$ and some $C_0>0$.
		Define $\xi_{\varepsilon}=\left(\eta_{\varepsilon}\right)^{\frac{n+6}{2}}$. Multiplying \eqref{ourPDE} by $\xi_{\varepsilon}$, and integrating by parts in $B_{r}$ with $r\in(1/2,1)$, we obtain
		\begin{equation*}
			\int_{\mathbb{R}^{n}}f(u) \xi_{\varepsilon}\ud x =\int_{\mathbb{R}^{n}} u(-\Delta)^{3} \xi_{\varepsilon}\ud x.
		\end{equation*}
		One can verify that there exists $C_1>0$ such that
		\begin{equation*}
			\left|(-\Delta)^{3} \xi_{\varepsilon}\right| \leqslant C_1 \varepsilon^{-6}\eta_{\varepsilon}^{\frac{n-6}{2}} \chi_{\{\varepsilon \leqslant|x| \leqslant 2 \varepsilon\}}=C_1 \varepsilon^{-6}\xi_{\varepsilon}^{\frac{n-6}{n+6}} \chi_{\{\varepsilon \leqslant|x| \leqslant 2 \varepsilon\}},
		\end{equation*}
		which, by H\"{o}lder's inequality, gives us
		\begin{align*}
			\left|\int_{\mathbb{R}^{n}} u(-\Delta)^{3} \xi_{\varepsilon}\ud x\right|
			&\leqslant C_1 \varepsilon^{-6} \int_{\{\varepsilon \leqslant|x| \leqslant 2 \varepsilon\}} u\xi_{\varepsilon}^{\frac{n-6}{n+6}}\ud x \\
			& \leqslant \tilde C_1 \varepsilon^{-6} \varepsilon^{\frac{12n}{n+6}}\left(\int_{\{\varepsilon \leqslant|x| \leqslant 2 \varepsilon\}} f(u) \xi_{\varepsilon}\ud x\right)^{\frac{n-6}{n+6}} \\
			& \leqslant \hat C_1\left(\int_{\{\varepsilon \leqslant|x| \leqslant 2 \varepsilon\}} f(u) \xi_{\varepsilon}\ud x\right)^{\frac{n-6}{n+6}}.
		\end{align*}
		Thus, it follows
		\begin{equation*}
			\int_{\mathbb{R}^{n}} f(u) \xi_{\varepsilon}\ud x \leqslant \hat C_1\left(\int_{\{\varepsilon \leqslant|x| \leqslant 2 \varepsilon\}} f(u) \xi_{\varepsilon}\ud x\right)^{\frac{n-6}{n+6}},
		\end{equation*}
		from which one can find a constant $C_2>0$ (independent of $\varepsilon$) such that
		\begin{equation*}
			\int_{\mathbb{R}^{n}}f(u) \xi_{\varepsilon}\ud x \leqslant C_2.
		\end{equation*}
		Now letting $\varepsilon \rightarrow 0$, we conclude that $u\in L^{\frac{n+6}{n-6}}(\mathbb{R}^{n})$ and the integrability follows.
		
		For any nonnegative $\phi\in C^{\infty}_c(\mathbb{R}^n)$, we multiply \eqref{ourPDE} by $\widetilde{\phi}=\eta_{\varepsilon} \phi$, where $\eta_{\varepsilon}$ is given by \eqref{cutoff}. Then, using that $u\in L^{\frac{n+6}{n-6}}(\mathbb{R}^{n})$ and integrating by parts, we get
		\begin{equation}\label{yang1}
			\int_{\mathbb{R}^n} u(-\Delta)^{3}\left(\eta_{\varepsilon}\phi\right)\ud x=\int_{\mathbb{R}^n} f(u)\eta_{\varepsilon}\phi\ud x.
		\end{equation}
		By a direct computation, we find that $\Delta^{3}(\eta_{\varepsilon} \phi)=\eta_{\varepsilon} \Delta^{3}\phi+\psi_{\varepsilon}$, where
		\begin{equation*}
			\psi_{\varepsilon}=6 \nabla \eta_{\varepsilon}\nabla \Delta^2\phi-15 \Delta \eta_{\varepsilon} \Delta^2\phi+20 \nabla \Delta\eta_{\varepsilon}\nabla \Delta\phi-15 \Delta^2\eta_{\varepsilon}\Delta\phi+6 \nabla \Delta^2\eta_{\varepsilon}\nabla\phi-\phi\Delta^{3} \eta_{\varepsilon}.
		\end{equation*}
		Furthermore, using H\"{o}lder's inequality again, one can find $C_3>0$ such that
		\begin{align*}
			\left|\int_{\mathbb{R}^n} u\psi_{\varepsilon}\ud x\right| \leqslant C_3\left(\int_{\{\varepsilon\leqslant|x| \leqslant 2 \varepsilon\}} f(u)\ud x\right)^{\frac{n-6}{n+6}} \rightarrow 0 \quad \mbox{as} \quad \varepsilon \rightarrow 0.
		\end{align*}
		Finally, letting $\varepsilon \rightarrow 0$ in \eqref{yang1}, and applying the dominated convergence theorem the proof follows.
	\end{proof}
	
	\subsection{Superharmonicity}\label{subsec:superharmonicity}
	We show that positive singular solutions to \eqref{ourPDE} satisfy a superharmonicity property for lower orders powers of the Laplacian.
	
	\begin{lemma}\label{lm:superharmonicity1}
		Let $u\in C^{6}(\mathbb R^n\setminus\{0\})$ be a positive singular solution to \eqref{ourPDE}. Then, $u$ is a superharmonic in the distributional sense, that is,  $\phi\in C^{\infty}_c(\mathbb{R}^{n})$, one has
		\begin{equation*}
			\int_{\mathbb{R}^{n}}\Delta u \Delta^{2}\phi\ud x\leqslant 0 \quad {\rm for \ all} \quad \phi\in C^{\infty}_c(\mathbb{R}^{n}).
		\end{equation*}
		Moreover, in the classical sense
		\begin{equation}\label{superhmarnocity1}
			-\Delta u\geqslant0 \quad {\rm in} \quad \mathbb{R}^{n} \setminus\{0\}.
		\end{equation}
	\end{lemma}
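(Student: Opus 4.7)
The plan is to apply $-\Delta$ directly to the integral representation of Proposition~\ref{lm:integralrepresentation} and observe that the resulting kernel is manifestly positive. Since in $\mathbb{R}^{n}\setminus\{0\}$ one has $\Delta|x|^{\alpha}=\alpha(\alpha+n-2)|x|^{\alpha-2}$, plugging in $\alpha=6-n$ with $n\geq 7$ gives
$$-\Delta_{x}|x-y|^{6-n}=4(n-6)\,|x-y|^{4-n}>0\qquad\text{for every }x\neq y,$$
so that, formally,
$$-\Delta u(x)=4(n-6)\int_{\mathbb{R}^{n}}|x-y|^{4-n}f(u(y))\,\ud y,$$
and the right-hand side is pointwise nonnegative because $f(u)\geq 0$. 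This is exactly \eqref{superhmarnocity1}.

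To make the above differentiation rigorous at a fixed $x\in\mathbb{R}^{n}\setminus\{0\}$, I would split the integral into the pieces over $B_{r}(x)$ and its complement, with $r=|x|/2$. On the exterior piece the kernel is smooth in $x$, so the differentiation is justified by dominated convergence using $f(u)\in L^{1}(\mathbb{R}^{n})$ (Lemma~\ref{lm:integrability}). On $B_{r}(x)$ the density $f(u)$ is bounded because $u\in C^{6}(\mathbb{R}^{n}\setminus\{0\})$, so the identity reduces to the standard Riesz potential computation $(-\Delta)(|\cdot|^{6-n}\ast g)=4(n-6)(|\cdot|^{4-n}\ast g)$ for bounded compactly supported $g$. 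Stitching the two pieces together delivers the pointwise formula and hence \eqref{superhmarnocity1}. The distributional inequality then follows from the fact, already guaranteed by Lemma~\ref{lm:integrability}, that $u$ solves \eqref{ourPDE} in the sense of distributions: for any nonnegative $\phi\in C^{\infty}_{c}(\mathbb{R}^{n})$, testing against $\phi$ and using $(-\Delta)^{3}\phi=-\Delta(\Delta^{2}\phi)$ gives
$$0\leq\int_{\mathbb{R}^{n}}f(u)\phi\,\ud x=-\int_{\mathbb{R}^{n}}u\,\Delta(\Delta^{2}\phi)\,\ud x,$$
and pushing one Laplacian from $\Delta^{2}\phi$ back onto $u$ distributionally (legitimate because $u\in L^{1}_{\loc}(\mathbb{R}^{n})$) produces the claimed $\int\Delta u\cdot\Delta^{2}\phi\,\ud x\leq 0$.

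The principal technical obstacle is ensuring that the differentiations under the integral sign and the integrations by parts are not spoiled by the singularity at $\{0\}$. I would handle this exactly as in the proof of Lemma~\ref{lm:integrability}: insert the cutoffs $\eta_{\varepsilon}$ from \eqref{cutoff}, observe that the resulting error terms are all controlled by integrals of $f(u)$ over the thin shell $\{\varepsilon\leq|x|\leq 2\varepsilon\}$, and let these vanish as $\varepsilon\to 0$ by dominated convergence, thanks again to the $L^{1}$-integrability of $f(u)$ established there.
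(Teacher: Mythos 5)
Your derivation of the distributional inequality is essentially sound: testing the equation against $\eta_{\varepsilon}\phi$ and discarding the error terms via H\"older and the $L^{1}$-control of $f(u)$ from Lemma~\ref{lm:integrability} is exactly the mechanism the paper uses, and your closing paragraph correctly identifies that this cutoff work is where the content lies. The problem is the pointwise statement \eqref{superhmarnocity1}. Your entire argument for it rests on applying $-\Delta$ to the representation formula of Proposition~\ref{lm:integralrepresentation}, but that proposition is stated, and proved (via the Chen--Li--Ou result), only for positive solutions $u\in C^{6}(\mathbb{R}^{n})$, i.e.\ for \emph{non-singular} solutions. For a singular solution, what Lemma~\ref{lm:integrability} gives you is merely that $u$ solves \eqref{ourPDE} distributionally across the origin; from this you can only conclude that $h:=u-C_{n}\,|\cdot|^{6-n}\ast f(u)$ is a (distributionally, hence smooth) tri-harmonic function on all of $\mathbb{R}^{n}$, and the Riesz potential is well defined because $f(u)\in L^{1}$. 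Showing that $h$ vanishes (or at least that $-\Delta h\geqslant 0$) is a genuine additional step, and in the literature on singular solutions it is carried out by combining growth estimates with precisely the sign information $-\Delta u\geqslant 0$, $\Delta^{2}u\geqslant 0$ that you are trying to prove --- note that the paper itself only invokes the integral representation for singular solutions \emph{after} Lemmas~\ref{lm:superharmonicity1}--\ref{lm:superharmonicity2}, and even defines the critical sliding radius using these signs. So as written your argument is circular, or at best leaves the key step (validity of the representation formula, with no tri-harmonic remainder, for singular solutions) unproved.

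By contrast, the paper's proof of \eqref{superhmarnocity1} never touches the representation formula: it first obtains the distributional sign information by the cutoff computation, then shows $|\Delta u(x)|\rightarrow 0$ as $|x|\rightarrow\infty$ from decay estimates, and finally applies a maximum-principle/mean-value argument to $\widetilde{u}^{\varepsilon}=-\Delta u+\varepsilon$, letting $\varepsilon\rightarrow 0$. If you want to keep your Riesz-potential strategy, you would have to first prove the representation formula for singular solutions (ruling out the tri-harmonic additive term by a Liouville-type argument using positivity of $u$, the integrability of $f(u)$, and estimates near the origin), which is a substantial piece of work that your proposal does not address.
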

	
	\begin{proof}
		Proceeding similarly to Lemma~\ref{lm:integrability}, one can prove that $u \in L_{\rm loc}^{\frac{n+6}{n-6}}(\mathbb{R}^{n})$. 
		Let $\eta_{\varepsilon} \in C^{\infty}(\mathbb{R}^{n})$ be the cut-off function given by \eqref{cutoff} and $\phi \in C_{c}^{\infty}(\mathbb{R}^{n})$ be a nonnegative test function. 
		Then, by multiplying \eqref{ourPDE} by $\eta_{\varepsilon} \phi$, and integrating by parts, we get
		\begin{align*}
			0 \geqslant -\int_{\mathbb{R}^{n}} \eta_{\varepsilon} \phi f(u)\ud x=\int_{\mathbb{R}^{n}} \Delta^2\left(\eta_{\varepsilon}\phi\right) \Delta u\ud x=\int_{\mathbb{R}^{n}} \Delta u\left(\Delta^2 \phi \eta_{\varepsilon}+\varsigma^{\varepsilon}\right)\ud x,
		\end{align*}
		where $\varsigma^{\varepsilon}:=4 \langle\nabla \Delta \phi, \nabla \eta_{\varepsilon}\rangle+6 \Delta \phi \Delta \eta_{\varepsilon}+4 \langle\nabla \phi, \nabla \Delta \eta_{\varepsilon}\rangle+\phi\Delta^2\eta_{\varepsilon}$.
		Notice that  $\varsigma^{\varepsilon}(x) \equiv 0$ when $|x| \leqslant \varepsilon$ or $|x| \geqslant 2 \varepsilon$, and $|\Delta \varsigma^{\varepsilon}(x)| \leqslant C_1 \varepsilon^{-6}$, for some $C_1>0$. 
		In addition, since $n-6-\frac{n(n-6)}{n+6}>0$, the following estimate holds
		\begin{align*}
			\left|\int_{\mathbb{R}^{n}} \Delta u \varsigma^{\varepsilon}\ud x\right| & \leqslant \int_{\mathbb{R}^{n}} u|\Delta \varsigma^{\varepsilon}|\ud x\\
			& \leqslant C_1 \varepsilon^{-6}\left(\int_{\{\varepsilon \leqslant|x| \leqslant 2 \varepsilon\}} f(u)\ud x\right)^{\frac{n-6}{n+6}}\varepsilon^{n\left(1-\frac{n-6}{n+6}\right)} \\
			& \leqslant C_1 \varepsilon^{n-6-\frac{n(n-6)}{n+6}} \rightarrow 0 \quad \mbox{as} \quad \varepsilon \rightarrow 0,
		\end{align*}
		which implies
		\begin{align*}
			\int_{\mathbb{R}^{n}} \Delta u\Delta \phi\ud x =\lim_{\varepsilon \rightarrow 0}\int_{\mathbb{R}^{n}}\left(\Delta u\Delta^2 \varsigma^{\varepsilon}\ud x+\eta_\varepsilon\Delta\phi \Delta u\right)\ud x=\int_{\mathbb{R}^{n}}\phi f(u)\ud x\leqslant 0.
		\end{align*}
		Thus, $-\Delta u$ is superharmonic in the whole space $\mathbb{R}^{n}$ in the distributional sense, which gives the first part of the proof.
		
		To prove the second statement, given $0<\varepsilon\ll1$, let us consider $\widetilde{u}^{\varepsilon}:=-\Delta u+\varepsilon$.
		Using Lemma~\ref{lm:integrability}, there exists a constant $C_2>0$, depending only on $n$ and $s$, such that for all $|x| \geqslant 4$, it holds
		\begin{equation*}
			\sum_{j=0}^5|x|^{\gamma_n+j}|D^{(j)} u(x)| \leqslant C_2,
		\end{equation*}
		which yields that  $\lim_{|x|\rightarrow \infty}|\Delta u(x)|=0$.
		Whence, for any $0<\varepsilon\ll1$, there exists $R_{\varepsilon}\gg1$ such that $\widetilde{u}^\varepsilon>{\varepsilon}/{2}$ for $|x| \geqslant R_{\varepsilon}$.
		Finally, using that $\widetilde{u}^{\varepsilon}$ is superharmonic in $\mathbb{R}^{n}$ in the distributional sense, we have that
		$\widetilde{u}^{\varepsilon} \geqslant 0$ in $\mathbb{R}^{n}\setminus\{0\}$, which, by passing to the limit as $\varepsilon\rightarrow 0$, provides $-\Delta u \geqslant 0$ in $\mathbb{R}^{n}\setminus\{0\}$.
		The last inequality concludes the proof of the lemma.
	\end{proof}
	
	\begin{lemma}\label{lm:superharmonicity2}
		Let $u\in C^{6}(\mathbb R^n\setminus\{0\})$ be a positive singular solution to \eqref{ourPDE}. Then, $-\Delta u$ is a superharmonic in the distributional sense, that is,  $\phi\in C^{\infty}_c(\mathbb{R}^{n})$, one has
		\begin{equation*}
			\int_{\mathbb{R}^{n}}\Delta^2 u \Delta\phi\ud x\geqslant 0 \quad {\rm for \ all} \quad \phi\in C^{\infty}_c(\mathbb{R}^{n}).
		\end{equation*}
		Moreover, in the classical sense
		\begin{equation}\label{superhmarnocity2}
			\Delta^2 u\geqslant0 \quad {\rm in} \quad \mathbb{R}^{n} \setminus\{0\}.
		\end{equation}
	\end{lemma}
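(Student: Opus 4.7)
My plan is to mirror the proof of Lemma~\ref{lm:superharmonicity1}. The starting observation is that $\Delta(\Delta^2 u) = \Delta^3 u = -(-\Delta)^3 u = -c_n f(u) \leqslant 0$ on $\mathbb{R}^n \setminus \{0\}$, so $\Delta^2 u$ is already a classical superharmonic function away from the origin. The remaining work is to promote this to distributional superharmonicity valid across the origin, and then to pointwise nonnegativity via the decay of $u$ at infinity.

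For the distributional step, I test the equation against $\eta_\varepsilon \phi$, where $\phi \in C^\infty_c(\mathbb{R}^n)$ is nonnegative and $\eta_\varepsilon$ is the cutoff from \eqref{cutoff}. A single integration by parts yields
\begin{equation*}
  \int_{\mathbb{R}^n} \Delta^2 u \cdot \Delta(\eta_\varepsilon \phi) \ud x = -c_n \int_{\mathbb{R}^n} f(u) \eta_\varepsilon \phi \ud x \leqslant 0.
\end{equation*}
Writing $\Delta(\eta_\varepsilon \phi) = \eta_\varepsilon \Delta\phi + \vartheta^\varepsilon$ with $\vartheta^\varepsilon := 2\nabla \eta_\varepsilon \cdot \nabla\phi + \phi \Delta \eta_\varepsilon$ supported in $\{\varepsilon \leqslant |x| \leqslant 2\varepsilon\}$ reduces the matter to showing that $\int \Delta^2 u \cdot \vartheta^\varepsilon \ud x$ vanishes as $\varepsilon \to 0$. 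I treat this error by two further integrations by parts, transferring all remaining Laplacians onto the smooth cutoff:
\begin{equation*}
  \int_{\mathbb{R}^n} \Delta^2 u \cdot \vartheta^\varepsilon \ud x = \int_{\mathbb{R}^n} u \cdot \Delta^2 \vartheta^\varepsilon \ud x,
\end{equation*}
together with the estimate $|\Delta^2 \vartheta^\varepsilon| \leqslant C_0 \varepsilon^{-6}$ on the shell. The integrability $u \in L^{(n+6)/(n-6)}_{\loc}$ from Lemma~\ref{lm:integrability} and H\"older's inequality then produce an error of order $\varepsilon^{(6n-36)/(n+6)} \to 0$, which indeed vanishes because $n \geqslant 7$. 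Passing to the limit gives the distributional inequality $\int \Delta^2 u \cdot \Delta\phi \ud x \leqslant 0$ for all nonnegative test functions $\phi$.

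For the pointwise conclusion, I adapt the $\varepsilon$-perturbation argument from the end of Lemma~\ref{lm:superharmonicity1}. Decay estimates on $u$ and its derivatives at infinity, obtained from the integral representation in Proposition~\ref{lm:integralrepresentation} and the integrability afforded by Lemma~\ref{lm:integrability}, yield $\lim_{|x| \to \infty} \Delta^2 u(x) = 0$. Thus for each $0 < \varepsilon \ll 1$ there exists $R_\varepsilon \gg 1$ with $w^\varepsilon := \Delta^2 u + \varepsilon > \varepsilon/2$ outside $B_{R_\varepsilon}$. Since $w^\varepsilon$ is distributionally superharmonic on the whole of $\mathbb{R}^n$, the minimum principle forces $w^\varepsilon \geqslant 0$ on $\mathbb{R}^n \setminus \{0\}$, and sending $\varepsilon \to 0$ delivers $\Delta^2 u \geqslant 0$.

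The main technical obstacle is the error control in the distributional step. In Lemma~\ref{lm:superharmonicity1}, after a single integration by parts the error already involved only $u$, directly tamed by $\|u\|_{L^{(n+6)/(n-6)}}$; here the natural error contains the less regular $\Delta^2 u$, for which we have no a priori $L^p$ bound. The remedy of dumping all six derivatives onto the smooth cutoff $\vartheta^\varepsilon$ costs a factor $\varepsilon^{-6}$, and the argument closes precisely because $n \geqslant 7$ makes the resulting exponent $(6n - 36)/(n+6)$ strictly positive.
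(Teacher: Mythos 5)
Your proposal is correct and is essentially the paper's intended argument: the authors omit the proof of this lemma precisely because it is the adaptation of Lemma~\ref{lm:superharmonicity1} that you carry out — test against $\eta_\varepsilon\phi$, split off the commutator term supported in the shell $\{\varepsilon\leqslant|x|\leqslant2\varepsilon\}$, throw all derivatives onto the cutoff and close the error with H\"older and the $L^{\frac{n+6}{n-6}}$ bound of Lemma~\ref{lm:integrability} (your exponent $\tfrac{6n-36}{n+6}>0$ matches the paper's $n-6-\tfrac{n(n-6)}{n+6}$), and then conclude pointwise via the decay of $\Delta^2u$ at infinity, the $\varepsilon$-perturbation and the minimum principle. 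The only discrepancy is with the literal display of the lemma: what your argument (correctly) yields is $\int_{\mathbb{R}^n}\Delta^2u\,\Delta\phi\,\ud x\leqslant0$ for nonnegative $\phi$, consistent with $\Delta^3u=-f(u)\leqslant0$ and with the sign convention in Lemma~\ref{lm:superharmonicity1}, and this is exactly what feeds the minimum principle to give \eqref{superhmarnocity2}; the ``$\geqslant0$'' in the statement appears to be a typo (harmless apart from the minor double-counting of $c_n$ when you write $-c_nf(u)$, since $f$ already contains $c_n$).
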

	
	\begin{proof}
		The proof is similar to the one of Lemma~\ref{lm:superharmonicity1}, so we omit it.
	\end{proof}
	
	\begin{remark}
		Notice that these superharmonicity identities in \eqref{superhmarnocity1} and \eqref{superhmarnocity1} can be respectively rewritten in radial Emden--Fowler coordinates $($see Subsection~\ref{subsec:changeofvariables}$)$ as follows
		\begin{equation*}
			v^{(2)}+(n-5)v^{(1)}+\frac{(n-6)(n-4)}{4}v<0
		\end{equation*}
		and
		\begin{equation*}
			v^{(4)}+2(n-3)v^{(3)}-{(3n^2-18n+22)}v^{(2)}+{(n^3-9n^2+22n-12)}v^{(1)}+\frac{(n-6)(n-4)(n-2)}{16}v>0.
		\end{equation*}
		In addition, we have the decomposition
		\begin{equation*}
			v^{(4)}-{(\nu^+_n+\nu^-_n)}v^{(2)}+(\nu^+_n\nu^-_n) v>0,
		\end{equation*}
		where
		\begin{equation*}
			\nu^\pm:=\sqrt{5\pm\sqrt{2n^4-24n^3+88n^2-96n+16}}.
		\end{equation*}
	\end{remark}
	
	\subsection{Integral moving spheres method}\label{subsec:movingspheres}
	We prove the main result of this subsection, namely, the radial symmetry of the blow-up limit solutions. 
	
	\begin{lemma}\label{msestimate}
		Let $u\in C^{6}(\mathbb R^n\setminus\{0\})$ be a positive singular solution to \eqref{ourPDE}. 
		For any $x \in \mathbb{R}^n$, $z \in \mathbb R^n\setminus\left(\{0\}\cup B_{\mu}(x)\right)$ and $\mu\in(0,1)$, it holds that $u(z)-(u)_{x, \mu}(z)>0$.
	\end{lemma}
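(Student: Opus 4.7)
The plan is to turn the desired inequality into a Hammerstein-type comparison by means of the integral representation $(\mathcal I_6)$ and the conformal invariance, and then close it by a Hardy--Littlewood--Sobolev contradiction argument. First, by Proposition~\ref{lm:integralrepresentation} both $u$ and $u_{x,\mu}$ (using Proposition~\ref{prop:conformalinvariance}) satisfy the same integral equation $w(z)=\int|z-y|^{6-n}f(w(y))\,\ud y$. I would split each integral at $\partial B_{\mu}(x)$ and perform the inversion change of variables $y=\mathcal I_{x,\mu}(y')$ on the inner piece. The Jacobian $\mathcal K_{x,\mu}(y')^{2n}$, the conformal relation $|z-\mathcal I_{x,\mu}(y')|=\mathcal K_{x,\mu}(y')\mathcal K_{x,\mu}(z)^{-1}|\mathcal I_{x,\mu}(z)-y'|$, and the critical transformation law $f(u(\mathcal I_{x,\mu}(y')))=\mathcal K_{x,\mu}(y')^{-(n+6)}f(u_{x,\mu}(y'))$ cancel exactly, which after subtraction yields the single-integral identity
\begin{equation*}
u(z)-u_{x,\mu}(z)=\int_{\mathbb R^n\setminus B_{\mu}(x)}K_{x,\mu}(z,y)\,\bigl[f(u(y))-f(u_{x,\mu}(y))\bigr]\,\ud y,
\end{equation*}
with kernel $K_{x,\mu}(z,y):=|z-y|^{6-n}-\mathcal K_{x,\mu}(z)^{n-6}|\mathcal I_{x,\mu}(z)-y|^{6-n}$.

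Second, I would establish positivity of the kernel on the relevant region. A direct expansion using $\mathcal I_{x,\mu}(z)-x=(\mu^2/|z-x|^2)(z-x)$ gives the elementary identity
\begin{equation*}
|\mathcal I_{x,\mu}(z)-y|^2-\mathcal K_{x,\mu}(z)^2|z-y|^2=\frac{(|z-x|^2-\mu^2)(|y-x|^2-\mu^2)}{|z-x|^2}.
\end{equation*}
Since the exponent $6-n$ is negative, positivity of the right-hand side for $z,y$ strictly outside $\overline{B_{\mu}(x)}$ is equivalent to $K_{x,\mu}(z,y)>0$.

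Third, I would argue by contradiction. Set $\Sigma:=\{z\in\mathbb R^n\setminus(\{0\}\cup\overline{B_{\mu}(x)}):u(z)\le u_{x,\mu}(z)\}$ and suppose $\Sigma\neq\emptyset$. On $\Sigma$, monotonicity and the mean value theorem applied to $f(t)=c_n t^{(n+6)/(n-6)}$ give $0\le f(u_{x,\mu})-f(u)\le C\,u_{x,\mu}^{12/(n-6)}(u_{x,\mu}-u)$. Plugging this into the integral identity, using $K_{x,\mu}(z,y)\le|z-y|^{6-n}$, and then applying the Hardy--Littlewood--Sobolev inequality followed by H\"older at the critical exponent $2^{\#}=2n/(n-6)$ leads to
\begin{equation*}
\|(u_{x,\mu}-u)_+\|_{L^{2^{\#}}(\Sigma)}\le C\,\|u_{x,\mu}\|_{L^{2^{\#}}(\Sigma)}^{12/(n-6)}\,\|(u_{x,\mu}-u)_+\|_{L^{2^{\#}}(\Sigma)}.
\end{equation*}
By Lemma~\ref{lm:integrability} together with conformal invariance of the $L^{2^{\#}}$-norm under the sixth-order Kelvin transform, $u_{x,\mu}\in L^{2^{\#}}(\mathbb R^n)$, and for $\mu\in(0,1)$ its mass is concentrated near $x$, so the prefactor $C\|u_{x,\mu}\|_{L^{2^{\#}}(\Sigma)}^{12/(n-6)}$ can be made strictly less than $1$. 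This forces $(u_{x,\mu}-u)_+\equiv 0$ on $\Sigma$, hence $\Sigma=\emptyset$, and the strict inequality then follows from the positivity of $K_{x,\mu}$ combined with the non-triviality of $f(u)$.

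The main obstacle is the third step: guaranteeing that the HLS/H\"older prefactor is genuinely subcritical over the full range $\mu\in(0,1)$ and every base point $x$, which requires exploiting the singularity of $u$ at the origin together with the concentration behaviour of the Kelvin transform. The algebraic manipulations of Step~1 are delicate but routine once the conformal dictionary is in place, and the kernel positivity identity of Step~2, while elementary, is the structural ingredient that enables the comparison.
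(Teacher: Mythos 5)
Your Steps 1 and 2 coincide with the paper's entire argument for this lemma: the paper derives exactly the same single-integral identity for $u-(u)_{x,\mu}$ from the inversion identities behind \eqref{integralconformalinvariance}, with the kernel $E(x,y,z,\mu)=|z-y|^{6-n}-\big(|z-x|/\mu\big)^{6-n}\left|\mathcal I_{x,\mu}(z)-y\right|^{6-n}$, and the positivity of $E$ for $|z-x|,|y-x|>\mu$ is precisely what your product identity verifies (the paper only asserts it via ``decay properties''). Up to that point the two arguments agree, and indeed what the sequel (Lemma~\ref{lm:movingspheres}) actually uses is this identity together with the positivity of the kernel.

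The genuine gap is your Step 3. The absorption constant $C\,\|u_{x,\mu}\|_{L^{2^{\#}}(\Sigma)}^{12/(n-6)}$ cannot be made less than $1$ in the stated generality. First, Lemma~\ref{lm:integrability} gives $u\in L^{\frac{n+6}{n-6}}(\mathbb R^n)$, not $u\in L^{2^{\#}}=L^{\frac{2n}{n-6}}$, so the integrability you invoke is at the wrong exponent; worse, for the singular (Emden--Fowler) solutions the lemma must cover one has $u(x)\approx|x|^{\frac{6-n}{2}}$ near the origin, hence $u^{2^{\#}}\sim|x|^{-n}$ is not locally integrable there. Since by the change of variables $\|u_{x,\mu}\|_{L^{2^{\#}}(\mathbb R^n\setminus B_{\mu}(x))}=\|u\|_{L^{2^{\#}}(B_{\mu}(x))}$, this quantity is $+\infty$ whenever $0\in B_{\mu}(x)$, and when finite it carries no smallness: the rescalings $u\mapsto\lambda^{\frac{n-6}{2}}u(\lambda\,\cdot)$ map singular solutions to singular solutions, so the threshold $\mu<1$ has no universal meaning and no constant independent of $u$, $x$, $\mu$ can beat the Hardy--Littlewood--Sobolev constant over the whole range $\mu\in(0,1)$. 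What an absorption argument of this type can yield is $u\geqslant u_{x,\mu}$ for $\mu$ small depending on $u$ and $x$ (with $\overline{B_{\mu}(x)}$ away from the origin), i.e.\ that the spheres can start moving --- which in the paper is instead encoded in the positivity of $\mu_*(x)$ in \eqref{criticalparameter}, obtained from the superharmonicity Lemmas~\ref{lm:superharmonicity1} and \ref{lm:superharmonicity2} --- but not the claimed strict inequality for every $\mu\in(0,1)$ and every $x$. So the part of your proposal that goes beyond the paper's proof is exactly the part that does not close.
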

	
	\begin{proof}
		Let $u$ be a positive singular solution to \eqref{ourPDE}. 
		Using the identities in \cite[page 162]{MR2055032}, one has 
		\begin{equation*}
			\left(\frac{\mu}{|z-x|}\right)^{n-6}\int_{|y-x| \geqslant\mu} {\left|\mathcal{I}_{x,\mu}(z)-y\right|^{n-6}}{f(u(y))} \ud y=\int_{|y-x| \leqslant \mu} \left|z-y\right|^{n-6}{f(u(y))}\ud y
		\end{equation*}
		and
		\begin{equation*}
			\left(\frac{\mu}{|z-x|}\right)^{n-6} \int_{|y-x|\leqslant\mu} {\left|\mathcal{I}_{x,\mu}(z)-y\right|^{n-6}}{f(u(y))} \ud y=\int_{|y-x|\geqslant \mu} \left|z-y\right|^{n-6}{f(u(y))}\ud y,
		\end{equation*}
		which yields
		\begin{equation}\label{integralconformalinvariance}
			{(u)}_{x, \mu}(z)=\int_{\mathbb{R}^{n}} \left|z-y\right|^{n-6}{f(u(y))}\ud y \quad \mbox{for} \quad z\in \mathcal{I}_{x,\mu}(\mathbb R^n).
		\end{equation}
		Consequently, for any $x \in \mathbb{R}^n$ and $\mu<1$, we have that for $z\in \mathbb{R}^n\setminus\{0\}\cup B_{\mu}(x)$,
		\begin{equation*}
			u(z)-(u)_{x, \mu}(z)=\int_{|y-x|\geqslant\mu} E(x,y,\mu,z)\left[f(u(y))-f(u_{x,\mu}(y))\right]\ud y,
		\end{equation*}
		where 
		\begin{equation}\label{kernelkelvintransform}
			E(x,y,z,\mu):={|z-y|^{6-n}}-\left(\frac{|z-x|}{\mu}\right)^{6-n} {\left|\mathcal{I}_{x,\mu}(z)-y\right|^{6-n}}
		\end{equation}
		is used to estimate the difference between $u$ and its Kelvin transform $u_{x,\mu}$. Finally, using its decay properties, it is straightforward to check that $E(x,y,z,\mu)>0$ for all $|z-x|>\mu>0$, which concludes the proof.
	\end{proof}
	
	Next, let us introduce the {critical sliding parameter} as the supremum for which an inequality relating a component function and its Kelvin transform is satisfied.
	\begin{definition}
		Given $x \in \mathbb{R}^{n}$, let us define
		\begin{equation}\label{criticalparameter}
			\mu_*(x)=\sup \left\{\mu>0:(u)_{x,r}\leqslant u \ {\rm in} \ \mathbb{R}^{n}\setminus B_{r}(x) \ {\rm for \ any} \ 0<r<\mu\right\}.
		\end{equation}
		Since $u$ is such that $-\Delta u>0$ and $\Delta^{2}u>0$, we get $\mu^*(x)>0$.
	\end{definition}
	
	The next lemma is essentially the moving spheres technique in its integral form.
	This method provides the exact form for any blow-up limit solution to \eqref{ourPDE}, which depends on whether the critical sliding parameter $\mu^*(x)$ is finite or infinite. 
	
	\begin{lemma}\label{lm:movingspheres}
		Let $u\in C^{6}(\mathbb R^n\setminus\{0\})$ be a positive singular solution to \eqref{ourPDE}, $z\in\mathbb{R}^n$ and $\mu^*(z)>0$ given by \eqref{criticalparameter}. 
		The following holds:
		\begin{itemize}
			\item[{\rm (i)}] if $\mu^*(x)<\infty$ is finite, then 
			$u_{x,\mu^*(x)}=u$ in $\mathbb{R}^{n} \setminus\{0,x\}$.
			\item[{\rm (ii)}] if $\mu^*(x_0)=\infty$, for some $x_{0} \in \mathbb{R}^{n}$, then $\mu^*(x)=\infty$ for all $x \in \mathbb{R}^{n}\setminus\{0\}$.
		\end{itemize}
	\end{lemma}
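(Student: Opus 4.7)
The plan is to exploit the integral representation (derived in Lemma~\ref{msestimate})
\[
u(z) - u_{x,\mu}(z) = \int_{|y-x|\geqslant\mu} E(x,y,z,\mu)\left[f(u(y)) - f(u_{x,\mu}(y))\right] \ud y,
\]
together with the strict positivity of the kernel $E$ for $|z-x|>\mu$ and the strict monotonicity of $f(t)=c_n t^{(n+6)/(n-6)}$. This yields the classical Chen--Li--Ou dichotomy: whenever $u\geqslant u_{x,\mu}$ holds throughout the integration domain $\{|y-x|\geqslant\mu\}$, one either has identity or a strict inequality $u>u_{x,\mu}$ on the full open exterior, which opens the way to sliding the sphere past $\mu$.

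For part (i), assume $\mu^*:=\mu^*(x)<\infty$. By continuity of $\mu\mapsto u_{x,\mu}$ and the supremum definition, $u_{x,\mu^*}\leqslant u$ on $\mathbb{R}^n\setminus(\{0\}\cup B_{\mu^*}(x))$. Suppose for contradiction that $u\not\equiv u_{x,\mu^*}$ there. The integral identity above then upgrades this to a strict inequality $u>u_{x,\mu^*}$ on the exterior, and I would contradict the supremum by a two-scale continuation: on each compact annulus $\overline{B_R(x)}\setminus B_{\mu^*}(x)$, a uniform gap $u-u_{x,\mu^*+\delta}\geqslant c_R>0$ persists for small $\delta$ thanks to continuous dependence of $u_{x,\mu}$ on $\mu$; complementarily, the decay bound $|x|^{n-6}u(x)\leqslant C$ as $|x|\to\infty$ (a consequence of Lemma~\ref{lm:integrability} combined with the superharmonicity of $u$ and $-\Delta u$ provided by Lemmas~\ref{lm:superharmonicity1} and~\ref{lm:superharmonicity2}) controls the tail so that $u\geqslant u_{x,\mu^*+\delta}$ survives on $\{|z-x|>R\}$ as well. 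Combining the two regions yields $u_{x,\mu^*+\delta}\leqslant u$ on the full exterior $\mathbb{R}^n\setminus(\{0\}\cup B_{\mu^*+\delta}(x))$, contradicting the maximality of $\mu^*$. Hence $u\equiv u_{x,\mu^*(x)}$ on $\mathbb{R}^n\setminus\{0,x\}$.

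For part (ii), assume $\mu^*(x_0)=\infty$. The key observation is the growth bound $\liminf_{|z|\to\infty}|z|^{n-6}u(z)=+\infty$, obtained as follows: for $|z-x_0|>1$ and $\mu:=|z-x_0|^{1/2}$, the point $\mathcal{I}_{x_0,\mu}(z)=x_0+\mu^2(z-x_0)/|z-x_0|^2$ lies on the unit sphere $\partial B_1(x_0)$, on which $u$ admits a positive lower bound $m>0$ by continuity (adjusting the reference radius to avoid the origin when needed). The hypothesis then gives
\[
u(z) \geqslant u_{x_0,\mu}(z) = \left(\frac{\mu}{|z-x_0|}\right)^{n-6} u(\mathcal{I}_{x_0,\mu}(z)) \geqslant m\,|z-x_0|^{-(n-6)/2},
\]
so $|z|^{n-6}u(z)\geqslant m\,|z|^{(n-6)/2}(1+o(1))\to\infty$. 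Now if some $x_1\in\mathbb{R}^n\setminus\{0\}$ had $\mu^*(x_1)<\infty$, part (i) would yield $u\equiv u_{x_1,\mu^*(x_1)}$, forcing the asymptotics $u(z)\sim (\mu^*(x_1))^{n-6}u(x_1)|z|^{-(n-6)}$ as $|z|\to\infty$ and hence $|z|^{n-6}u(z)$ bounded, contradicting the growth estimate. Therefore $\mu^*(x)=\infty$ for every $x\in\mathbb{R}^n\setminus\{0\}$.

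The main technical obstacle is the continuation step in part (i), namely the control at infinity: both $u$ and $u_{x,\mu^*+\delta}$ decay at the common rate $|z|^{-(n-6)}$, so strict positivity must be quantified before the perturbation can proceed. The standard remedy is to insert the strict positivity contributed by integration over any fixed bounded region where $u>u_{x,\mu^*}$ into the integral representation, and then couple this with the decay/integrability from Subsections~\ref{subsec:integrability}--\ref{subsec:superharmonicity} to produce a uniform far-field margin. Once this is in place, the dichotomy in (ii) follows formally from (i) together with the elementary growth computation above.
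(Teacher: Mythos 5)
Your part (ii) is essentially the paper's own argument: you show that $\mu^*(x_0)=\infty$ forces $\liminf_{|z|\to\infty}|z|^{n-6}u(z)=+\infty$ (your choice $\mu=|z-x_0|^{1/2}$ is a clean variant of the paper's ``let $\mu\to\infty$'' step), while finiteness of some $\mu^*(x_1)$ together with part (i) forces that same quantity to be finite; this part is fine. The genuine gap is in part (i), in the continuation step, and it is not where you locate it. You claim that on each compact annulus $\overline{B_R(x)}\setminus B_{\mu^*}(x)$ a uniform gap $u-u_{x,\mu^*+\delta}\geqslant c_R>0$ persists for small $\delta$ by continuous dependence. This is false near the inner boundary: by the very definition of the Kelvin transform, $u_{x,\mu}\equiv u$ on $\partial B_{\mu}(x)$ for every $\mu$, so $u-u_{x,\mu^*}$ vanishes on $\partial B_{\mu^*}(x)$ and $u-u_{x,\mu^*+\delta}$ vanishes on $\partial B_{\mu^*+\delta}(x)$; no positive constant lower bound can hold on any neighborhood of the sphere of inversion, and continuity in $\mu$ only tells you the difference is small there, not that it keeps a sign. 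This near-sphere region is exactly where the work lies, and it is what the paper's proof spends most of its effort on: positivity must be quantified with the rate $|z-x|-\mu$. Concretely, the paper uses that the kernel $E$ in \eqref{kernelkelvintransform} vanishes on $|z-x|=\mu$ while its radial derivative there is strictly positive for $y$ in a fixed annulus $\mu^*+2\leqslant|y-x|\leqslant\mu^*+3$, so that part of the integral contributes at least $\delta_1\delta_2(|z-x|-\mu)$; on the thin shell $\mu^*\leqslant|y-x|\leqslant\mu^*+1$ it uses the Lipschitz bound $|f(u)-f(u_{x,\mu^*})|\leqslant C(\mu-\mu^*)\leqslant C\varepsilon$ together with $\int E\,\ud y\leqslant C(|z-x|-\mu)$, so the possibly negative contribution is $\mathcal{O}(\varepsilon)(|z-x|-\mu)$ and is absorbed for $\varepsilon$ small. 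Without an estimate of this type your sphere cannot be moved past $\mu^*$.

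A secondary issue is the far field: what is needed there is a \emph{lower} bound of the form $\liminf_{|z|\to\infty}|z|^{n-6}\bigl(u-u_{x,\mu^*}\bigr)(z)>0$, which the paper extracts from the integral identity \eqref{integralconformalinvariance} under the assumed strict inequality, and which then survives the perturbation $\mu\in[\mu^*,\mu^*+\varepsilon_2]$ because $u_{x,\mu}$ depends continuously on $\mu$ at the scale $|z|^{6-n}$. The upper bound $|z|^{n-6}u\leqslant C$ you invoke cannot by itself separate two functions decaying at the same rate; your closing paragraph gestures at the correct fix (feeding the strict positivity on a bounded region back into the integral representation) but does not carry it out. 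With the weighted far-field lower bound and the near-sphere rate estimate supplied, the contradiction with the maximality of $\mu^*$ in \eqref{criticalparameter} goes through as in the paper, and your part (ii) then stands as written.
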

	
	\begin{proof}
		Without loss of generality, we may assume $x_0=0$. 
		Let us fix $\mu^*=\mu^*(0)$ and $(u)_{\mu}=(u)_{0,\mu}$. 
		By the definition of $\mu^*(x)>0$, we have $(u)_{\mu^*}(x) \leqslant u(x)$ for all $|x| \geqslant \mu^*$.
		Thus, by \eqref{integralconformalinvariance}, with $x=0$ and $\mu=|x|\geqslant \mu^*$, and the positivity of the kernel $E(0,y,z,\mu)$ given by \eqref{kernelkelvintransform}, either $u_{\mu^*}(y)=u(y)$ for all $|x|\geqslant \mu^*$ or $(u)_{\mu^*}(y)<(u)(y)$ for all $|x|>\mu^*$. In the former case, the conclusion easily follows.
		In the sequel, we assume that the last condition holds.
		Hence, the integral representation in Proposition~\ref{lm:integralrepresentation} yields
		\begin{align*}
			\liminf_{|z|\rightarrow\infty}|z|^{n-6}\left[u(z)-(u)_{\mu^*}(z)\right]&=\liminf_{|z|\rightarrow \infty} \int_{|y| \geqslant \mu^*}|z|^{n-6} E(0,y,z,\mu^*)\left[{f}(u(y))-{f}(u_{\mu^*}(y))\right]\ud y&\\
			&\geqslant \int_{|y|\geqslant \mu^*}\left(1-\left(\frac{\mu^*}{|y|}\right)^{n-6}\right)\left[{f}(u(y))-{f}(u_{\mu^*}(y))\right]\ud y>0,&
		\end{align*}
		which implies that there exists $\varepsilon_{1} \in(0,1)$ satisfying $u(z)-(u)_{\mu^*}(z)\geqslant {\varepsilon_{1}}{|z|^{6-n}}$ for all $|z| \geqslant \mu^*+1$.
		Moreover, there exists $\varepsilon_{2}\in(0,\varepsilon_{1})$ such that,
		for $|z|\geqslant \mu^*+1 \ \mbox{and} \ \mu^* \leqslant \mu \leqslant \mu^*+\varepsilon_{2}$, we find 
		\begin{equation}\label{blowup1}
			\left(u-(u)_{\mu^*}\right)(z) \geqslant {\varepsilon_{1}}{|z|^{6-n}}+\left((u)_{\mu^*}-(u)_{\mu}\right)(z) \geqslant \frac{\varepsilon_{1}}{2}|z|^{6-n}.
		\end{equation}
		Whence, for any $\varepsilon \in\left(0, \varepsilon_{2}\right)$ (to be chosen later), $\mu^* \leqslant \mu \leqslant \mu^*+\varepsilon$, and $\mu \leqslant|y| \leqslant \mu^*+1$, we have
		\begin{align*}
			\left(u-(u)_{\mu^*}\right)(z)&=\int_{|y| \geqslant \mu} E(0,y,z,\mu)\left[{f}(u(y))-{f}(u_{\mu}(y))\right]\ud y\\
			&\geqslant \int_{\mu^* \leqslant|y| \leqslant \mu^*+1} E(0,y,z,\mu)\left[{f}(u_{\mu^*}(y))-{f}(u_{\mu}(y))\right]\ud y\\
			&+\int_{\mu^*+2 \leqslant|y| \leqslant \mu^*+3} E(0,y,z,\mu)\left[{f}(u(y))-{f}(u_{\mu}(y))\right] \ud y.
		\end{align*}
		Now using \eqref{blowup1}, there exists $\delta_{1}>0$ such that ${f}(u(y))-{f}(u_{\mu}(y))\geqslant\delta_1$ for $\mu^*+2 \leqslant|y| \leqslant \mu^*+3$.
		Since $E(0, y, z, \mu)=0$ for all $|z|=\mu$ and
		\begin{equation*}
			\nabla_{z} E(0, y, z, \mu)\cdot z\big|_{|z|=\mu}=(n-4)|z-y|^{8-n}\left(|z|^{2}-|y|^{2}\right)>0 \quad \mbox{for all} \quad \mu^*+2 \leqslant|y| \leqslant \mu^*+3,
		\end{equation*}
		where $\delta_{2}>0$ is a constant independent of $\varepsilon$. 
		Then, there exists $C_1>0$ such that, for $\mu^* \leqslant \mu \leqslant \mu^*+\varepsilon$, we get
		\begin{equation*}
			\left|{f}(u(y))-{f}(u_{\mu^*}(y))\right| \leqslant C_1(\mu-\mu^*) \leqslant C_1 \varepsilon \quad \mbox{for all} \quad \mu^* \leqslant \mu \leqslant|y| \leqslant \mu^*+1.
		\end{equation*}
		Furthermore, recalling that $\mu \leqslant|z| \leqslant \mu^*+1$, one can find $C_2>0$ such that
		\begin{align*}
			\int_{\mu \leqslant|y| \leqslant \mu^*} E(0, y, z, \mu) \ud y &\leqslant \left|\int_{\mu \leqslant|y| \leqslant \mu^*+1}\left[{|y-z|^{6-n}}-{\left|\mathcal{I}_{\mu}(z)-y\right|^{8-n}}+\left(\frac{\mu}{|z|}-1\right)^{n-6}\left|\mathcal{I}_{\mu}(z)-y\right|^{n-4}\right] \ud y\right|\\
			&\leqslant C_2\left|\mathcal{I}_{\mu}(z)-z\right|+C_2(|z|-\mu) \leqslant C_2(|z|-\mu),
		\end{align*}
		which, provides that for small $0<\varepsilon\ll1$, $\mu^* \leqslant \mu \leqslant \mu^*+\varepsilon$, and $\mu\leqslant|z| \leqslant \mu^*+1$, it follows
		\begin{align*}
			\left(u-(u)_{\mu^*}\right)(z)& 
			\geqslant-C_2 \varepsilon \int_{\mu \leqslant|y| \leqslant \mu^*+1} E(0, y, z, \mu)\ud y+\delta_{1}\delta_{2}(|z|-\mu) \int_{\mu^*+2 \leqslant|z| \leqslant \mu^*+3} \ud y&\\
			&\geqslant\left(\delta_{1}\delta_{2} \int_{\mu^*+2 \leqslant|y| \leqslant \mu^*+3}\ud z-C_2\varepsilon\right)(|z|-\mu)&\\
			&\geqslant 0.&
		\end{align*}
		This is a contradiction to the definition of $\mu^*(x)>0$. Therefore, the first part of the lemma is established.
		
		Next, by the definition of $\mu^*(x)>0$, we know that $u_{x, \mu}(z)\leqslant u(z)$ for all $0<\mu<\mu^*(x)$, with $|z-x|\geqslant\mu$; thus, multiplying it by $|z|^{n-6}$, and taking the limit as $|z|\rightarrow\infty$, yields
		\begin{equation}\label{blowup2}
			\ell=\liminf_{|z| \rightarrow \infty}|z|^{n-6}|\mathcal{U}(z)|\geqslant \mu^{n-6}|\mathcal{U}(z)| \quad \mbox{for all} \quad 0<\mu<\mu^*(x).
		\end{equation}
		On the other hand, if $\mu^*(x_0)<\infty$, multiplying the identity obtained in (i) by $|z|^{n-6}$ and passing to the limit when $|z|\rightarrow\infty$, we obtain
		\begin{equation}\label{blowup3}
			\ell=\lim_{|z| \rightarrow \infty}|z|^{n-6} f(u(z))=\mu^*(x_0)^{n-6} f(x_0)<\infty.
		\end{equation}
		Finally, by \eqref{blowup2} and \eqref{blowup3}, if there exists $x_0 \in \mathbb{R}^{n}$ such that $\mu^*(x_0)<\infty$, then $\mu^*(x)<\infty$ for all $x \in \mathbb{R}^{n}$.
	\end{proof}
	
	\begin{proof}[Proof of Proposition~\ref{prop:radialsymmetry}]
		Using Lemma~\ref{lm:movingspheres}, we can apply \cite[Proposition~2.1]{MR2479025} to conclude that positive singular solutions to \eqref{ourPDE} are radially symmetric with respect to the origin. 
		The proof that they are radially monotonically decreasing will be given in the next section (See Proposition~\ref{prop:odeclassification} \ref{itm:C3}).
	\end{proof}
	
	\section{ODE analysis}\label{sec:ODEanalysis}
	This section is devoted to the classification of (positive) solutions to the problem \eqref{ourODE}. 
	The positivity will not play a role here.
	Our strategy here is inspired by the methods in \cite{MR1740359,MR3869387,MR4094467} and relies on the decomposition in Proposition~\ref{prop:decomposition}.
	It is outstanding to recover such properties for solutions to sixth order ODE with possibly chaotic dynamical behavior near equilibrium points.
	
	The next classification result will be the key part of the proof of our main theorem 
	\begin{proposition}\label{prop:odeclassification}
		Let $v \in C^{6}(\mathbb{R})$ be a solution to \eqref{ourODE}. 
		\begin{enumerate}
			\item[\namedlabel{itm:C1}{\rm (i)}] Then,
			\begin{equation}\label{estimate}
				\inf_{\mathbb{R}}|v|\leqslant a^{*}_n
			\end{equation}
			with equality holding if, and only if, $v$ is a nonzero constant function. 
			\item[\namedlabel{itm:C2}{\rm (ii)}] Conversely, if $a_0 \in(0, a^{*}_n)$, then there exists a unique $($up to translations$)$ bounded solution $v \in C^{6}(\mathbb{R})$ to \eqref{ourODE} such that $\inf_{\mathbb{R}}|v|=a_0$. This solution is periodic, has a unique local maximum and minimum per period, and is symmetric with respect to its local extrema.
			\item[\namedlabel{itm:C3}{\rm (iii)}] Moreover, if $v$ is positive, then $v^{(1)}-\gamma_{n}v<0$ in $\mathbb R$.
		\end{enumerate} 
	\end{proposition}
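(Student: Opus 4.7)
I would prove the three parts by exploiting the decomposition \eqref{decomposition} to reduce \eqref{ourODE} to a cascade of three second-order operators, each of which admits a comparison principle because all indicial roots in \eqref{indicialroots} are real. Conservation of the Hamiltonian \eqref{conservationofenergy} provides the quantitative constraint on trajectories, while the time-reversal symmetry $t \mapsto -t$ of \eqref{ourODE} furnishes the kinematic symmetries needed for the shooting argument.

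\textbf{For \ref{itm:C1},} I would argue by contradiction. Since the equation and the statement are invariant under $v \mapsto -v$, up to a sign flip I may assume $v \geq 0$, and, for contradiction, that $\inf v > a^{*}_n$. By the definition of the equilibrium value, this forces $g(v) = f(v) - K_0 v > 0$ throughout $\mathbb R$. Each operator $L_{\lambda_j}$ in \eqref{decomposition} is invertible on bounded functions via convolution with an exponentially decaying Green's function on $\mathbb R$, so $L_{\lambda_j}^{-1}$ maps nonpositive inputs to nonnegative outputs. Cascading the three inverses against $-g(v) < 0$ produces strict sign information on the intermediate quantities $z := L_{\lambda_3}v$ and $w := L_{\lambda_2}z$, and, chained with the lower bound $v \geq a^{*}_n$, pins $v^{(2)}$ below by a positive constant; integration then contradicts boundedness of $v$. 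In the unbounded case, the same cascade produces convexity together with superlinear growth, which I would rule out by evaluating \eqref{conservationofenergy} at the attained infimum $t_0$ (where $v^{(1)}(t_0)=0$ and $v^{(2)}(t_0)\geq 0$). The equality case $\inf|v|=a^*_n$ forces $g(v(t_0))=0$, and the energy identity then pins all odd-order derivatives at $t_0$ to zero, giving $v\equiv a^{*}_n$.

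\textbf{For \ref{itm:C2},} I would perform a two-parameter topological shooting in the spirit of \cite{MR3869387,MR4094467}. Using time-reversibility, I restrict to even solutions by prescribing
\[
v(0)=a_0, \quad v^{(2)}(0)=a_2, \quad v^{(4)}(0)=a_4, \quad v^{(1)}(0)=v^{(3)}(0)=v^{(5)}(0)=0,
\]
leaving a three-parameter family. Evaluating \eqref{conservationofenergy} at $t=0$ expresses $\mathcal{H}_v$ as a function of $(a_0,a_2,a_4)$, so once the energy level is fixed one parameter is eliminated. In the remaining two-dimensional shooting space, the non-blow-up condition at $\pm\infty$ selects a discrete solution set via a degree argument on a rectangle chosen from barrier functions built out of the linearization at $a^*_n$ (whose oscillatory character is guaranteed by $f'(a^*_n) > 0$ together with the strict positivity of the discriminant in \eqref{discriminant}). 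Uniqueness, periodicity, uniqueness of extrema per period, and the reflection symmetry about those extrema then follow from the decomposition \eqref{decomposition}: the second-order quantities $L_{\lambda_3}v$ and $L_{\lambda_2}L_{\lambda_3}v$ each admit a phase-plane description in which trajectories between consecutive extrema are strictly monotone, and time-reversibility at each extremum yields the global symmetry.

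\textbf{For \ref{itm:C3} and the main obstacle.} The inequality $v^{(1)} - \gamma_n v < 0$ is equivalent, after undoing the Emden--Fowler change of variables of Subsection~\ref{subsec:changeofvariables}, to the radial monotonicity of $u(r)$. This I would extract from the moving-spheres machinery already assembled in Section~\ref{sec:radialsymmetry}: differentiating in $\mu$ the identity $u\equiv u_{x,\mu^{*}(x)}$ of Lemma~\ref{lm:movingspheres}\,(i) at the critical sliding parameter $\mu^{*}(x)$ and invoking the Hopf-type calculation on the kernel $E(x,y,z,\mu)$ in \eqref{kernelkelvintransform} gives strict radial monotonicity of $u$ away from the origin, which transcribes into the desired sign condition on $v^{(1)}-\gamma_n v$. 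The main technical obstacle will be the shooting step in \ref{itm:C2}: the six-dimensional phase space of \eqref{ourODE} could in principle support chaotic dynamics around the equilibrium $a^{*}_n$, and it is precisely the realness of every indicial root in $\mathcal{I}(P^3_{\rm rad})$ (a consequence of \eqref{discriminant}) that enables the decomposition \eqref{decomposition} into three operators with comparison principles, thereby taming the dynamics into a second-order-like problem accessible to the shooting argument.
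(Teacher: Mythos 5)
Your toolkit (factorization into the three second--order operators, conservation of the Hamiltonian, a two--parameter shooting) is the right one, but at each of the three parts the step that actually carries the load is missing or fails. For \ref{itm:C1}: cascading the Green's--function inverses of $L_{\lambda_1},L_{\lambda_2}$ against $-f(v)<0$ does \emph{not} ``pin $v^{(2)}$ below by a positive constant''. Carrying the inequalities through carefully one only gets an \emph{upper} bound of the form $v^{(2)}\leqslant \lambda_3(v-a^*_n)$, which is perfectly compatible with $v>a^*_n$ and bounded; no contradiction results at that stage. (The cascade can be salvaged by inverting all three factors and using $f(m)>K_0m$ for $m=\inf v>a^*_n$, but that is not what you wrote, and it still presupposes boundedness of $v$ and of the intermediate quantities, which is exactly the content of Lemma~\ref{lm:boundededness} and is itself a nontrivial energy argument you have assumed away.) Your treatment of the unbounded case evaluates the energy ``at the attained infimum $t_0$'', but if $v$ is unbounded and monotone the infimum need not be attained. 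Finally, the rigidity statement is not proved: the Hamiltonian at a single point is one scalar identity and cannot ``pin all odd-order derivatives at $t_0$ to zero'', and even if it could, vanishing of the odd derivatives at one point does not give $v\equiv a^*_n$ without a uniqueness/comparison result such as Lemma~\ref{lm:comparisonprinciple}; the paper instead gets the strict inequality at interior minima from Lemma~\ref{lm:maximumandminimumpoints}, whose proof is a genuinely delicate energy comparison that your sketch has no substitute for.

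For \ref{itm:C2} the existence sketch (``degree argument on a rectangle chosen from barrier functions built out of the linearization at $a^*_n$'') specifies no map, no barriers and no degree computation, whereas the paper's shooting works with explicit open sets $S_1,S_2$, an energy bound ruling out their intersection, and connectedness of the parameter set; more seriously, the uniqueness up to translation of the bounded solution with $\inf v=a_0$ -- the hardest part of (ii) -- is simply asserted via an unproved ``phase-plane monotonicity'' of $L_{\lambda_3}v$ and $L_{\lambda_2}L_{\lambda_3}v$. In the paper this requires the whole chain Lemma~\ref{lm:signproperty}, Corollary~\ref{cor:energyidentity}, Lemma~\ref{lm:ordering-comparison} and the energy--ordering Lemma~\ref{lm:energyordering}, and your proposal offers no replacement; you also never exclude that a bounded solution with the prescribed infimum is homoclinic rather than periodic (the paper's Lemma~\ref{lm:classification}). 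For \ref{itm:C3} your route through the moving--spheres machinery misreads the dichotomy of Lemma~\ref{lm:movingspheres}: for a \emph{singular} solution the identity $u\equiv u_{x,\mu^*(x)}$ of case (i) does not hold (that case corresponds to solutions extending across the origin), so there is nothing to differentiate in $\mu$; what is available is only the inequality $u_{x,\mu}\leqslant u$ for $\mu<\mu^*(x)$, and extracting $\partial_ru<0$ from it is a different argument that you have not given. The paper proves (iii) purely at the ODE level, via the Riccati equation for $w=v^{(1)}/v$ combined with the maximum principle applied successively to $\Phi_2$ and $\Phi_1$ from \eqref{auxiliaryfunctions}; some argument of that type (or a correct moving--plane monotonicity proof) is needed, and your proposal currently contains neither.
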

	
	We begin with some preliminary remarks.
	\begin{remark}
		Notice that $v\equiv0$ and $v\equiv a^{*}_n$, where $a^{*}_n>0$ is given by \eqref{constantsolutions} are exactly the only three constant solutions to \eqref{ourODE}. 
		Moreover, if $v(\cdot)$ is a solution to \eqref{ourODE},
		\begin{itemize}
			\item then $v(-\cdot)$ also is a solution to \eqref{ourODE} since it contains only even order derivatives;
			\item then $-v(\cdot)$ also is a solution to \eqref{ourODE} since $G$ is odd;
			\item then $v(\cdot+T)$ for any $T \in \mathbb{R}$ is also a solution to \eqref{ourODE} since it is autonomous.
		\end{itemize}
	\end{remark}
	
	To prove our main proposition in this section, we will need some auxiliary results quantifying the intuition that the set of bounded solutions to the higher order equation \eqref{ourODE} behaves in some respects similar to the set of solutions to a second order equation. 
	As we pointed out before, for this it is crucial that the relation \eqref{discriminant} holds. 
	
	\subsection{Boundedness}
	The following lemma states that solutions to \eqref{ourODE} are bounded, which is one of the key new results in this manuscript. 
	The proof is based solely on the conservation of energy, as so is independent of the rest of the argument.
	
	\begin{lemma}\label{lm:boundededness}
		Let $v \in C^{6}(\mathbb{R})$ be a solution to \eqref{ourODE}. Then $v$ is bounded.
	\end{lemma}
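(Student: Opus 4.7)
The plan is a proof by contradiction combining the conservation of $\mathcal{H}_v$ (Proposition~\ref{prop:conservation}) with the factorization $P^3_{\rm rad} = L_{\lambda_1}\circ L_{\lambda_2}\circ L_{\lambda_3}$ (Proposition~\ref{prop:decomposition}). Suppose $v$ is unbounded. Since \eqref{ourODE} is invariant under $v \mapsto -v$ (the nonlinearity is odd), I may assume $\sup_{\mathbb R} v = +\infty$; and because $v \in C^6(\mathbb R)$ is globally defined, by standard arguments one can extract a sequence of local maxima $\{s_k\}$ with $v(s_k) \to +\infty$, $v^{(1)}(s_k) = 0$, and $v^{(2)}(s_k) \leqslant 0$ (the eventually monotone case has to be treated separately but is easier, as then one of the derivative terms in the energy must be responsible for the cancellation and the analysis reduces to a simpler asymptotic analysis).

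Evaluating the factorized energy \eqref{energyfactorized} at each $s_k$, where $\mathcal{E}_1(v)v^{(1)}$ vanishes, gives
$$\mathcal{H}_v = \tfrac{1}{2}(v^{(3)}(s_k))^2 - v^{(4)}(s_k) v^{(2)}(s_k) + \tfrac{K_4}{2}(v^{(2)}(s_k))^2 + G(v(s_k)).$$
Because $F(v) = \hat c_n |v|^{2n/(n-6)}$ grows strictly faster than $v^2$, the potential $G(v(s_k)) = F(v(s_k)) - \tfrac{K_0}{2}v(s_k)^2 \to +\infty$. Since $\mathcal{H}_v$ is a fixed finite constant and $\tfrac{1}{2}(v^{(3)})^2 + \tfrac{K_4}{2}(v^{(2)})^2 \geqslant 0$, balancing the identity forces $v^{(4)}(s_k)v^{(2)}(s_k) \to +\infty$; combined with $v^{(2)}(s_k) \leqslant 0$, this means both $v^{(2)}(s_k)$ and $v^{(4)}(s_k)$ are strictly negative and their product diverges.

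To turn this pointwise information into a genuine contradiction, I would exploit the decomposition by setting $w_1 := L_{\lambda_3} v$ and $w_2 := L_{\lambda_2} w_1$, so that \eqref{ourODE} reads $L_{\lambda_1} w_2 = -f(v)$. Each factor $L_{\lambda_j} = \partial_t^{(2)} - \lambda_j$ with $\lambda_j > 0$ obeys a comparison principle: any bounded solution of $L_{\lambda_j} w = h$ on $\mathbb R$ is the convolution of $h$ with the positive Green's function $\tfrac{1}{2\sqrt{\lambda_j}} e^{-\sqrt{\lambda_j}|\cdot|}$, which controls $w$ by $h$. Applied to the factored equation, the divergence of $f(v)$ along $\{s_k\}$ forces a corresponding divergence propagating up through the tower $w_2, w_1, v$ that is quantitatively incompatible with the sixth-order ODE at $s_k$, which reads
$$v^{(6)}(s_k) = K_4 v^{(4)}(s_k) - K_2 v^{(2)}(s_k) + K_0 v(s_k) - c_n v(s_k)^{(n+6)/(n-6)},$$
so $v^{(6)}(s_k) \sim -c_n v(s_k)^{(n+6)/(n-6)}$, of a size incompatible with the constraint extracted in the previous step.

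The main obstacle is the indefiniteness of the kinetic part of $\mathcal{H}_v$: the mixed products $v^{(5)} v^{(1)}$ and $-v^{(4)} v^{(2)}$ preclude any direct coercive bound on derivatives from energy conservation alone, so one cannot simply argue that bounded energy implies a bounded solution. The factorization machinery substitutes for the missing sixth-order maximum principle. A delicate supplementary point is ruling out compensating oscillations between consecutive $s_k$; the remedy is to also work at local extrema of the auxiliary functions $w_1$ and $w_2$, where the associated energy summand $\mathcal{E}_j(v)$ from \eqref{auxiliarysignfunctions} simplifies, and transfer the constraint upward through the tower until a clean sign contradiction emerges.
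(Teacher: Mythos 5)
There is a genuine gap, and it sits exactly where the contradiction is supposed to appear. Your evaluation of \eqref{hamiltonianenergy} at local maxima $s_k$ is correct and does force $v^{(4)}(s_k)v^{(2)}(s_k)\to+\infty$ with both factors negative, but this is a true consequence, not a contradiction: nothing so far forbids a solution whose peaks carry enormous negative $v^{(2)}$ and $v^{(4)}$. The step that is supposed to close the argument is only sketched, and the two concrete claims in it do not hold as stated. First, the convolution representation $w=-\tfrac{1}{2\sqrt{\lambda_j}}e^{-\sqrt{\lambda_j}|\cdot|}*h$ for $L_{\lambda_j}w=h$ is valid only for bounded (or suitably decaying) $w$ and $h$; for a possibly unbounded solution the homogeneous modes $e^{\pm\sqrt{\lambda_j}t}$ enter, so you cannot ``control $w$ by $h$'' through the tower $w_2,w_1,v$ without already knowing the boundedness you are trying to prove --- the argument is circular at this point. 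Second, the assertion $v^{(6)}(s_k)\sim -c_n v(s_k)^{(n+6)/(n-6)}$ discards the term $K_4v^{(4)}(s_k)$, which your own previous step shows may be unboundedly large, and in any case you never extract a competing bound on $v^{(6)}(s_k)$ with which this would be ``incompatible.'' Finally, the eventually monotone case ($v\uparrow+\infty$) is dismissed as easier, but it is not: because of the indefinite mixed terms $v^{(5)}v^{(1)}$ and $-v^{(4)}v^{(2)}$, constancy of $\mathcal H$ alone does not rule out monotone divergence; in the paper this case is excluded by Lemma~\ref{lm:asymptotics}, whose $\ell_+=+\infty$ case requires a Mitidieri--Pohozaev-type test-function argument.

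The missing idea, which is how the paper proceeds, is to evaluate the energy not at local maxima of $v$ but at points where \emph{both} $v^{(1)}$ and $v^{(2)}$ vanish. At such points every indefinite summand of \eqref{hamiltonianenergy} vanishes and the Hamiltonian collapses to $\tfrac12 (v^{(3)})^2+G(v)\geqslant G(v)$, which is coercive in $v$; so if $|v|$ ever exceeds a threshold $R$ with $G>\mathcal H_v$ beyond it, one tracks the solution (using $v^{(1)},v^{(2)}\geqslant0$ on the intermediate interval) up to the next such point $T$, where $v(T)\geqslant R$ gives $\mathcal H(v)(T)>\mathcal H_v$, contradicting Proposition~\ref{prop:conservation}. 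When the set of common zeros of $v^{(1)},v^{(2)}$ is bounded, the solution is eventually monotone and Lemma~\ref{lm:asymptotics} forbids an infinite limit. If you want to rescue your plan, you should either switch to these evaluation points or supply an actual quantitative argument replacing the Green's-function step --- as written, no contradiction has been derived.
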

	
	\begin{proof}
		Initially,  by transforming $\tau=-t$, without loss of generality it is enough to prove that $v$ is bounded on $[0, \infty)$. 
		
		Indeed, for any solution $v \in C^{6}(\mathbb{R})$, we set
		\begin{equation}\label{criticalset}
			\mathcal Z_{+}(v)=Z_{+}^{(1)}(v)\cap Z_{+}^{(2)}(v),
		\end{equation}
		where
		\begin{equation*}
			Z_{+}^{(1)}(v)=\{t \geqslant 0: v^{(1)}(t)=0\} \quad {\rm and} \quad Z_{+}^{(2)}(v)=\{t \geqslant 0: v^{(2)}(t)=0\}.
		\end{equation*}
		
		In what follows, we have two cases to study:
		
		\noindent{\bf Case 1.} $\mathcal Z_{+}(v)$ is bounded.
		
		\noindent In this case, using Lemma~\ref{lm:asymptotics}, we get that $v$ is monotone for $t\gg1$ large enough, and thus admits a finite limit $a_{+}:=\lim_{t \rightarrow \infty}v(t)<\infty$.
		Therefore, $v$ is bounded on $[0, \infty)$.
		
		Next, we show that the other possibility cannot happen.
		
		\noindent{\bf Case 2.} $\mathcal Z_{+}(v)$ is unbounded.
		
		\noindent Now, since $F(v) \rightarrow \infty$ as $|v| \rightarrow \infty$, there exists an $R>|v(0)|$ such that $F(v)>H_{v}$ for all $|v| \geqslant R$. 
		Also, notice that $|v|<R$ in $[0, \infty)$. Indeed, by contradiction assume that $M_{R}:=\{t \geqslant 0:|v(t)| \geqslant R\}\neq\varnothing$. 
		We set $t^{*}:=\inf_{\mathbb R_+} M_{R}$. 
		
		\noindent In addition, because $|v(0)|<R$, we must have $t^{*}>0$ and $|v(t^{*})|=R$. Replacing $v(t)$ by $-v(t)$ if necessary and observing that it does not change the set $\mathcal Z_{+}(v)$, we may assume that $v(t^{*})=R$. 
		Hence, we also find $v^{(1)}(t^{*}) \geqslant 0$ and $v^{(2)}(t^{*}) \geqslant 0$. 
		Using that $\mathcal Z_{+}(v)$ is unbounded, we have that $-\infty<T:=\inf(\mathcal Z_{+}(v) \cap(t^{*}, \infty))$ is well-defined, which, by continuity of $v^{(1)}$ and $v^{(2)}$, yields $v^{(1)}(T)=v^{(2)}(T)=0$ and $\min\{v^{(1)},v^{(2)}\}\geqslant 0$ in $[t^{*}, T]$. 
		Consequently, we get $v(T) \geqslant v(t^{*})=R$, from which we deduce
		\begin{equation*}
			\mathcal{H}(v)(T)=\frac{1}{2}v^{(3)}(T)^{2}+F(v(T)) \geqslant F(v(T))>\mathcal{H}_{v},
		\end{equation*}
		which is a contradiction with the energy conservation \eqref{conservationofenergy} in Proposition~\ref{prop:conservation}.
		
		The proof is finished.
	\end{proof}
	
	\subsection{Comparison}
	The first result states that any bounded entire solution to \eqref{ourODE} is uniquely determined by only two (instead of $6$) initial values.
	
	\begin{lemma}\label{lm:comparisonprinciple}
		Let $v_1,v_2\in C^{6}(\mathbb{R})$ be bounded solutions to \eqref{ourODE}.
		If $v_1(0)=v_2(0)$ and $v_1^{(1)}(0)=v_2^{(1)}(0)$. 
		Then $v_1 \equiv v_2$.
	\end{lemma}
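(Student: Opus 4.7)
Set $w := v_1 - v_2$ and suppose for contradiction that $w \not\equiv 0$. The boundedness of $v_1$ and $v_2$, together with the ODE \eqref{ourODE} (which expresses $v^{(6)}$ in terms of lower derivatives) and standard interpolation inequalities in $L^\infty(\mathbb{R})$, will yield that $w^{(j)}$ is uniformly bounded for $j = 0, \dots, 5$. A one-parameter mean-value computation on the nonlinearity, combined with the decomposition of Proposition~\ref{prop:decomposition}, then shows that $w$ satisfies the linear equation
\begin{equation*}
L_{\lambda_1} L_{\lambda_2} L_{\lambda_3} w + c(t)\, w = 0, \qquad c(t) := \int_0^1 f'\bigl(\tau v_1(t) + (1-\tau) v_2(t)\bigr)\, \ud\tau,
\end{equation*}
with $c \in L^\infty(\mathbb{R})$ and $c \geq 0$.

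Next, for each $\lambda_i > 0$ the second-order factor $L_{\lambda_i} = \partial_t^{(2)} - \lambda_i$ has trivial kernel on $L^\infty(\mathbb{R})$, since its homogeneous solutions $e^{\pm\sqrt{\lambda_i}\, t}$ are unbounded. Hence $L_{\lambda_i}$ is invertible on bounded functions by convolution with the Green's function $G_{\lambda_i}(t) = -(2\sqrt{\lambda_i})^{-1} e^{-\sqrt{\lambda_i}\, |t|}$. Iterating this inversion through the triple decomposition will transform the linear equation into the integral equation
\begin{equation*}
w(t) = \int_{\mathbb{R}} K(t-s)\, c(s)\, w(s)\, \ud s,
\end{equation*}
where the kernel $K := -G_{\lambda_1} \ast G_{\lambda_2} \ast G_{\lambda_3}$ is strictly positive and integrable, with total mass $(\lambda_1 \lambda_2 \lambda_3)^{-1}$ and exponential decay at $\pm\infty$.

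The plan is then to combine the initial conditions $w(0) = w^{(1)}(0) = 0$ with the Hamiltonian conservation of Proposition~\ref{prop:conservation}. Since $\mathcal{H}(v_i)(t) \equiv \mathcal{H}_{v_i}$ for $i = 1, 2$, subtracting the two identities at $t = 0$ cancels the $F(v)$ and $\tfrac{K_0}{2} v^2$ contributions (by $v_1(0) = v_2(0)$) as well as the $\tfrac{K_2}{2}(v^{(1)})^2$ contribution (by $v_1^{(1)}(0) = v_2^{(1)}(0)$), leaving a single algebraic relation among the differences $w^{(j)}(0)$ for $j = 2, 3, 4, 5$. Further constraints will be extracted by differentiating the conservation identity and using \eqref{ourODE} to re-express higher derivatives of $w$ at $t = 0$ in terms of lower ones. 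The aim is to obtain a nondegenerate algebraic system forcing $w^{(j)}(0) = 0$ for all $j$, after which classical initial-value uniqueness for \eqref{ourODE} will imply $v_1 \equiv v_2$.

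\textbf{Main obstacle.} The delicate point is extracting four independent scalar constraints on $\bigl(w^{(2)}(0), w^{(3)}(0), w^{(4)}(0), w^{(5)}(0)\bigr)$, whereas a single application of Proposition~\ref{prop:conservation} yields only one. Closing the linear algebra will likely require a careful computation involving the specific coefficients $K_0, K_2, K_4$ of \eqref{coefficients}. Should this bootstrap fail to close directly, an alternative route is to work with the integral equation: exploiting the strict positivity of $K$ and $c$ together with the vanishing of $w(0)$ and $w^{(1)}(0)$, a Gronwall-type argument localized near $t = 0$ should force the Taylor coefficients of $w$ at the origin to vanish in succession, contradicting $w \not\equiv 0$ via the real-analyticity of $w$ inherited from the analytic right-hand side of \eqref{ourODE}.
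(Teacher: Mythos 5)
Your setup is fine as far as it goes: the linearized equation $L_{\lambda_1}L_{\lambda_2}L_{\lambda_3}w+c\,w=0$ with $c\geqslant 0$ bounded, and its inversion through the negative Green's functions to get $w=K*(c\,w)$ with $K>0$, is sound and close in spirit to the paper's use of the factorization in Proposition~\ref{prop:decomposition}. The genuine gap is the closing step, and it is not a technicality: no argument that only uses data at $t=0$ can prove this lemma, because the Cauchy problem for \eqref{ourODE} with just $v(0)$ and $v^{(1)}(0)$ prescribed has a four-parameter family of local solutions; the whole content of the statement is that global boundedness on $\mathbb{R}$ eliminates those four parameters, so $w^{(2)}(0)=\dots=w^{(5)}(0)=0$ simply cannot be forced by local algebra. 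Concretely, your main route fails because (a) $\mathcal{H}_{v_1}$ and $\mathcal{H}_{v_2}$ are not assumed equal, so subtracting the two conservation identities at $t=0$ leaves the unknown constant $\mathcal{H}_{v_1}-\mathcal{H}_{v_2}$ rather than a clean relation among the $w^{(j)}(0)$, and (b) differentiating the conservation identity of Proposition~\ref{prop:conservation} gives $0=0$ (it is equivalent to the ODE itself), so no further independent constraints exist --- the obstacle you flagged cannot be closed by a more careful computation with $K_0,K_2,K_4$. The fallback is blocked for the same reason: the identity $w=K*(c\,w)$ is global (boundedness is already spent in deriving it), so localizing near $t=0$ discards exactly the information you need; $w(0)=w^{(1)}(0)=0$ only yields two moment conditions on $c\,w$, with no mechanism making successive Taylor coefficients vanish; and real-analyticity is unavailable in general, since $f(v)=c_n|v|^{12/(n-6)}v$ is merely $C^1$ at $v=0$ unless $12/(n-6)$ is an even integer, and the lemma concerns possibly sign-changing bounded solutions. (A smaller point: your interpolation claim for bounding $w^{(j)}$, $j\leqslant 5$, is circular as stated, since bounding $w^{(6)}$ via the ODE already requires $w^{(2)},w^{(4)}$ bounded; this is repairable through the convolution representation, but it needs saying.)

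For contrast, the paper closes the argument in the opposite direction: assuming $v_1\not\equiv v_2$, it arranges (after reflections/interchange) an ordering $v_1^{(j)}(0)\geqslant v_2^{(j)}(0)$ for $j=2,3,4,5$, and then propagates this ordering through the three second-order factors $L_{\lambda_1},L_{\lambda_2},L_{\lambda_3}$ using their comparison principles and the strict monotonicity of $f$, concluding that $(v_1-v_2)^{(1)}$ is positive and increasing on $(0,\infty)$, which contradicts the boundedness of $v_1-v_2$. That is precisely where global boundedness enters, and it is the ingredient your closing step never uses. If you wish to keep your integral formulation, you would need a comparable global mechanism (for instance a comparison or sliding argument exploiting the positivity of $K$ and behavior at infinity), not an algebraic or Gronwall analysis at the origin.
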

	
	\begin{proof}
		Let $v_1,v_2\in C^{6}(\mathbb{R})$ be bounded solutions to \eqref{ourODE} which satisfy the initial conditions $v_1(0)=v_2(0)$ and $v_1^{(1)}(0)=v_2^{(1)}(0)$. 
		By interchanging $v_1$ and $v_2$ or replacing $v_1(t)$ and $v_2(t)$ by $v_1(-t)$ and $v_2(-t))$, we may assume without loss of generality that $v_1^{(j)}(0) \geqslant v_2^{(j)}(0)$ for $j=2,3,4,5$.
		
		Suppose, by contradiction, that $v_1 \not \equiv v_2$. 
		Then, by standard uniqueness result for ODEs, we have that $v_1^{(j)}(0) \neq v_2^{(j)}(0)$ for $j=2,3,4,5$. 
		In both cases, we deduce from our hypotheses on the initial conditions $v(t)>w(t)$ on $(0, \tau)$ for some sufficiently small $0<\tau\ll1$.
		
		Let $\{\lambda_1,\lambda_2,\lambda_3\}\subset\mathbb R$ be given by \eqref{indicialroots}. Using the decomposition in Proposition~\ref{prop:decomposition}, let us set the auxiliary functions $\{\Phi_1,\Phi_2,\Phi_3\}\subset C^6(\mathbb R)$ defined as
		\begin{equation}\label{auxiliaryfunctions}
			\Phi_1:=L_{\lambda_1}, \quad \Phi_2:=L_{\lambda_1}\circ L_{\lambda_2}, \quad \mbox{and} \quad \Phi_3:=L_{\lambda_1}\circ L_{\lambda_2}\circ L_{\lambda_3}.
		\end{equation}
		By the contradiction assumption, we have
		\begin{equation}\label{comparison1}
			(\Phi_2(v_1)-\Phi_2(v_2))(0) \geqslant 0 \quad \text { and } \quad(\Phi_2(v_1)-\Phi_2(v_2))^{(1)}(0) \geqslant 0
		\end{equation}
		and 
		\begin{equation}\label{comparison3}
			(\Phi_1(v_1)-\Phi_1(v_2))(0) \geqslant 0 \quad \text { and } \quad(\Phi_1(v_1)-\Phi_1(v_2))^{(1)}(0) \geqslant 0.
		\end{equation}
		Also, using that $v_1$ and $v_2$ satisfy \eqref{ourODE}, we find
		\begin{equation*}
			-L_{\lambda_3}(\Phi_2(v_1)-\Phi_2(v_2))=f(v_2(t))-f(v_1(t)) \quad \text {for all} \quad t \in \mathbb{R}.
		\end{equation*}
		
		Next, using that $v_1(t)>v_2(t)$ on $(0, \tau)$ and the strictly monotonicity of since the function $u \mapsto f(u)$, we get
		\begin{equation}\label{comparison2}
			-L_{\lambda_3}(\Phi_2(v_1)-\Phi_2(v_2))(t)>0 \quad \text { for all } \quad t \in(0, \tau).
		\end{equation}
		Hence, from \eqref{comparison1} and \eqref{comparison2}, we easily find that $(\Phi_2(v_1)-\Phi_2(v_2))(t) \geqslant 0$ for $t\in(0, \tau)$, or equivalently,
		\begin{equation}\label{comparison4}
			-L_{\lambda_2}(\Phi_1(v_1)-\Phi_1(v_2))(t)=\Phi_2(v_1)-\Phi_2(v_2)(t) \geqslant0 \quad \text { for all } \quad t \in(0, \tau).
		\end{equation}
		Whence, from \eqref{comparison3} and \eqref{comparison4}, we easily find that $(\Phi_
		1(v_1)-\Phi_1(v_2))(t) \geqslant 0$ for $t\in(0, \tau)$, or equivalently,
		\begin{equation}\label{comparison5}
			(v_1-v_2)^{(2)}(t) \geqslant \lambda_1(v_1-v_2)(t)>0 \quad \text { for all } \quad t \in(0, \tau).
		\end{equation}
		By the hypotheses, we have $(v_1-v_2)^{(1)}(0) \geqslant 0$, which combined with $\lambda_3>0$ and \eqref{comparison5} implies that $(v_1-v_2)^{(1)}(t)>0$ for all $t \in(0, \tau)$. 
		Thus, $v_1-v_2$ is strictly increasing on $(0, \tau)$, and since $0<\tau\ll1$ was arbitrarily small such that $v_1-v_2>0$ on $(0, \tau)$, we conclude that $v_1-v_2>0$ for all $t\in\mathbb R$.
		
		Repeating the above arguments for the interval $(0, \infty)$ instead of $(0, \tau)$, we get from \eqref{comparison5} that $(v_1-v_2)^{(1)}$ is positive and strictly increasing on $(0, \infty)$, which clearly contradicts the boundedness of $v_1-v_2$. 
		This shows that $v_1 \equiv v_2$, and concludes the proof of this lemma.
	\end{proof}
	
	As a consequence, we have the following symmetry result
	\begin{corollary}\label{cor:symmetry}
		Let $v \in C^{6}(\mathbb{R})$ be a bounded solution to \eqref{ourODE}.
		\begin{itemize}
			\item[{\rm (i)}] If $v^{(1)}(t_{0})=0$ for some $t_{0} \in \mathbb{R}$, then $v$ is symmetric with respect to $t_{0}$, that is $v(t_{0}+t)=v(t_{0}-t)$ for all $t \in \mathbb{R}$;
			\item[{\rm (ii)}] If $v(t_{0})=0$ for some $t_{0} \in \mathbb{R}$, then $v$ is anti-symmetric with respect to $t_{0}$, that is, $v(t_{0}-t)=-v(t_{0}+t)$ for all $t \in \mathbb{R}$. 
		\end{itemize}
	\end{corollary}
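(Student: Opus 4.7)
The plan is to exploit the rigidity encoded in Lemma~\ref{lm:comparisonprinciple}, which asserts that a bounded solution to \eqref{ourODE} is determined by the pair of initial data $(v(0),v^{(1)}(0))$. Since \eqref{ourODE} is autonomous, I first translate and replace $v$ by $\tilde v(t):=v(t_0+t)$, which is again a bounded solution. Thus I may assume $t_0=0$ throughout, so that (i) corresponds to $v^{(1)}(0)=0$ and (ii) to $v(0)=0$.

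For part (i), I consider the reflection $w(t):=v(-t)$. Because \eqref{ourODE} contains only even-order derivatives of $v$, the chain rule yields $w^{(2j)}(t)=v^{(2j)}(-t)$ for $j=0,1,2,3$, so $w$ is again a bounded solution of \eqref{ourODE}. Evaluating at $t=0$ gives $w(0)=v(0)$ and $w^{(1)}(0)=-v^{(1)}(0)=0=v^{(1)}(0)$. By Lemma~\ref{lm:comparisonprinciple} I conclude $w\equiv v$, i.e.\ $v(-t)=v(t)$ for all $t\in\mathbb R$, which is the claimed symmetry.

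For part (ii), I consider instead $w(t):=-v(-t)$. Using the same derivative computation together with the fact that the nonlinearity $g$ is odd (both $f(v)=c_n|v|^{12/(n-6)}v$ and $v\mapsto K_0 v$ are odd), a direct substitution shows that $w$ again solves \eqref{ourODE} and is bounded. At $t=0$ one has $w(0)=-v(0)=0=v(0)$ and $w^{(1)}(0)=v^{(1)}(0)$, so Lemma~\ref{lm:comparisonprinciple} again gives $w\equiv v$, hence $v(-t)=-v(t)$.

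There is essentially no obstacle: the only nontrivial observations are that the even-order structure of the linear part of \eqref{ourODE} and the oddness of the nonlinearity together make $v(t)\mapsto v(-t)$ and $v(t)\mapsto -v(-t)$ into symmetries of the equation, and that both transformations preserve boundedness. Once this is in hand, the whole corollary is a one-line consequence of the uniqueness statement in Lemma~\ref{lm:comparisonprinciple}.
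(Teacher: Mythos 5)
Your argument is correct and coincides with the paper's own proof: reduce to $t_0=0$ by autonomy, observe that $v(-t)$ (resp.\ $-v(-t)$, using the oddness of $g$) is again a bounded solution, and apply the uniqueness statement of Lemma~\ref{lm:comparisonprinciple} to the matching data $(v(0),v^{(1)}(0))$. The only difference is that you spell out part (ii) explicitly, which the paper leaves as ``the same strategy.''
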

	
	\begin{proof}
		To prove (i), notice that since \eqref{ourODE} is autonomous, we may assume $t_{0}=0$. 
		Moreover, if $v$ is a solution to \eqref{ourODE}, then $\hat{v}(t):=v(-t)$ also is. 
		Hence $v(0)=\hat{v}(0)$ and, by assumption, $v^{(1)}(0)=\hat{v}^{(1)}(0)=0$, which by Lemma~\ref{lm:comparisonprinciple} gives $v \equiv \hat{v}$.
		
		To prove (ii), we use the same strategy.
	\end{proof}
	
	With a similar proof, we can also prove the following comparison
	\begin{lemma}\label{lm:weakcomparisonprinciple}
		Let $v_1,v_2\in C^{6}(\mathbb{R})$ be bounded solutions to \eqref{ourODE}.
		Suppose that 
		\begin{align*}
			v_1(0)&\geqslant v_2(0)\\
			v_1^{(1)}(0)&\geqslant v_2^{(1)}(0)\\
			L_{\lambda_1}(v_1(0))&\geqslant L_{\lambda_1}(v_2(0))\\ L_{\lambda_1}^{(1)}(v_1(0))&\geqslant L_{\lambda_1}^{(1)}(v_2(0))\\
			(L_{\lambda_1}\circ L_{\lambda_2})(v_1(0))&\geqslant (L_{\lambda_1}\circ L_{\lambda_2})(v_2(0))\\ 
			(L_{\lambda_1}\circ L_{\lambda_2})^{(1)}(v_1(0))&\geqslant (L_{\lambda_1}\circ L_{\lambda_2})^{(1)}(v_2(0)).
		\end{align*}
		Then $v_1 \equiv v_2$.
	\end{lemma}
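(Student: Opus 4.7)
The proof will follow the same template as Lemma~\ref{lm:comparisonprinciple}, with the six hypotheses of the weak comparison now playing precisely the role of the initial data needed to propagate non-negativity through each link of the chain of second-order operators from the decomposition in Proposition~\ref{prop:decomposition}. I set $w:=v_1-v_2$, $\phi_1:=L_{\lambda_1}(w)=\Phi_1(v_1)-\Phi_1(v_2)$, and $\phi_2:=L_{\lambda_1}\circ L_{\lambda_2}(w)=\Phi_2(v_1)-\Phi_2(v_2)$, so that the six stated inequalities at $t=0$ read exactly
\begin{equation*}
w(0),\ w^{(1)}(0),\ \phi_1(0),\ \phi_1^{(1)}(0),\ \phi_2(0),\ \phi_2^{(1)}(0)\geqslant 0.
\end{equation*}
Since $v_1,v_2$ satisfy \eqref{ourODE}, Proposition~\ref{prop:decomposition} produces the three auxiliary identities $-L_{\lambda_3}(\phi_2)=f(v_1)-f(v_2)$, $L_{\lambda_2}(\phi_1)=\phi_2$, and $L_{\lambda_1}(w)=\phi_1$.

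I argue by contradiction: suppose $v_1\not\equiv v_2$, so that uniqueness for the sixth-order ODE forces at least one of the six initial quantities above to be strictly positive. The six hypotheses in fact imply $w^{(j)}(0)\geqslant 0$ for every $j=0,\dots,5$ (using $w^{(2)}(0)=\phi_1(0)+\lambda_1 w(0)$, $w^{(4)}(0)=\phi_2(0)+(\lambda_1+\lambda_2)w^{(2)}(0)-\lambda_1\lambda_2 w(0)$, and their first $t$-derivatives at $0$), with strict inequality in at least one slot, so a Taylor expansion at $0$ yields $w(t)>0$ on some right-neighborhood $(0,\tau)$. On this interval the strict monotonicity of $f$ gives $-L_{\lambda_3}(\phi_2)>0$; applied to $\phi_2$ with the data $\phi_2(0),\phi_2^{(1)}(0)\geqslant 0$, the same second-order comparison reasoning used in Lemma~\ref{lm:comparisonprinciple} produces $\phi_2\geqslant 0$ on $(0,\tau)$. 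The next two links of the chain, $L_{\lambda_2}(\phi_1)=\phi_2\geqslant 0$ with data $\phi_1(0),\phi_1^{(1)}(0)\geqslant 0$, and $L_{\lambda_1}(w)=\phi_1\geqslant 0$ with data $w(0),w^{(1)}(0)\geqslant 0$, then give $\phi_1\geqslant 0$ and $w\geqslant 0$, respectively.

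Once $w\geqslant 0$ is in place on $(0,\tau)$, the chain continues to apply and extends $w\geqslant 0$ to all of $(0,\infty)$. Exactly as in the closing step of Lemma~\ref{lm:comparisonprinciple}, the inequality $(v_1-v_2)^{(2)}\geqslant\lambda_1(v_1-v_2)$ combined with $(v_1-v_2)^{(1)}(0)\geqslant 0$ shows that $(v_1-v_2)^{(1)}$ is positive and strictly increasing on $(0,\infty)$, which contradicts the boundedness of $w$ and forces $v_1\equiv v_2$. The main obstacle is the first comparison step for $\phi_2$, where one must derive $\phi_2\geqslant 0$ from a \emph{wrong-signed} inequality on $L_{\lambda_3}(\phi_2)$; this is precisely the point at which Lemma~\ref{lm:weakcomparisonprinciple} is simpler than Lemma~\ref{lm:comparisonprinciple}, since the two hypotheses on $\phi_2(0)$ and $\phi_2^{(1)}(0)$ are supplied directly rather than having to be engineered through a Taylor expansion of $w$.
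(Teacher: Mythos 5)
Your argument is essentially the paper's own proof: the paper establishes this lemma simply by invoking the same chain-of-second-order-comparisons strategy as Lemma~\ref{lm:comparisonprinciple}, and your reduction to the three links $-L_{\lambda_3}(\phi_2)=f(v_1)-f(v_2)$, $L_{\lambda_2}(\phi_1)=\phi_2$, $L_{\lambda_1}(w)=\phi_1$, with the six hypotheses serving as the initial data at each link and the final contradiction coming from boundedness of $w$, is exactly that strategy. The only cosmetic difference is that you spell out the verification that $w^{(j)}(0)\geqslant 0$ for $j=0,\dots,5$ (which is best seen by substituting through the chain, since your displayed identity for $w^{(4)}(0)$ contains the negative term $-\lambda_1\lambda_2 w(0)$), a point the paper leaves implicit.
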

	
	\begin{proof}
		Follows the same strategy as in Lemma~\ref{lm:comparisonprinciple} 
	\end{proof}
	
	\subsection{Asymptotics}
	This subsection is devoted to the study of some asymptotic properties for solutions to \eqref{ourODE}.
	
	First, it is convenient to introduce the following definition
	\begin{definition}
		For any $v\in C^{6}(\mathbb R)$ solution to \eqref{ourODE}, let us define its {asymptotic set}  by
		\begin{equation*}
			\mathcal{A}(v):=\left\{\ell\in \mathbb R\cup\{\pm\infty\} : \lim_{t\rightarrow\pm\infty}v(t)=\ell\right\}.
		\end{equation*}
		In other words, $\mathcal{A}(v)$ is the set of all possible limits at infinity of  $v$.
	\end{definition}
	
	The content of the following lemma shows that \eqref{ourODE} does not possess a solution that tends to either plus or minus infinity at infinity, that is, a solution that blows up does it in finite time.
	\begin{lemma}\label{lm:asymptotics}
		Let $v \in C^{6}(\mathbb{R})$ be a solution to \eqref{ourODE}. If $\ell_{\pm}:=\lim_{t \rightarrow \pm\infty} v(t) \in \mathbb{R} \cup\{\pm \infty\}$ exists, then $\ell_{\pm} \in \mathbb{R}$. 
	\end{lemma}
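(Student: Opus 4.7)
The plan is to argue by contradiction. By the symmetries $v \mapsto -v$ and $v \mapsto v(-\cdot)$ of \eqref{ourODE} (the nonlinearity $g$ is odd and the operator contains only even-order derivatives), it suffices to rule out $\ell_+ = +\infty$. So I assume $v(t) \to +\infty$ as $t \to +\infty$ and aim for a contradiction.

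First, I combine conservation of the Hamiltonian $\mathcal{H}(v)(t) \equiv \mathcal{H}_v$ from Proposition~\ref{prop:conservation} with the factorized form \eqref{energyfactorized}. Since $F(v) = \hat c_n v^{2n/(n-6)}$ strictly dominates $v^2$ (the exponent $2n/(n-6)$ exceeds $2$ for $n > 6$), we have $G(v(t)) = F(v(t)) - \tfrac{K_0}{2} v(t)^2 \to +\infty$, whence
\begin{equation*}
\tfrac{1}{2}\bigl(v^{(3)}(t)\bigr)^2 + \mathcal{E}_2(v)(t)\, v^{(2)}(t) + \mathcal{E}_1(v)(t)\, v^{(1)}(t) = \mathcal{H}_v - G(v(t)) \to -\infty.
\end{equation*}

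Second, I choose $T > 0$ large enough that $v(t) \geq M_0 := \max\{1,\, (2K_0/c_n)^{(n-6)/12}\}$ for all $t \geq T$, which guarantees $g(v(t)) \geq \tfrac{1}{2} f(v(t)) > 0$ on $[T, \infty)$. Applying Cauchy's formula for repeated integration to \eqref{ourODE} six times on $[T, t]$ yields the integral identity
\begin{equation*}
v(t) + \int_T^t \frac{(t-r)^5}{120}\, g(v(r))\, dr + K_2 \int_T^t \frac{(t-r)^3}{6}\, v(r)\, dr - K_4 \int_T^t (t-r)\, v(r)\, dr = \tilde Q_5(t),
\end{equation*}
where $\tilde Q_5$ is a polynomial of degree at most $5$ whose coefficients depend only on $v^{(j)}(T)$ for $j = 0, \ldots, 5$. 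Since $g$ is monotone increasing on $[M_0, \infty)$ with $g(M_0) > 0$, the middle integral is bounded below by a constant times $(t-T)^6$, whereas $|\tilde Q_5(t)| = O(t^5)$ and the last integral is bounded above by $\tfrac{1}{2}K_4 (\sup_{[T,t]} v)(t-T)^2$. Matching orders forces $\sup_{[T, t]} v \geq C t^4$; iterating this estimate by reselecting $T$ so that $v(t) \geq N$ for arbitrarily large $N$ pushes the middle integral beyond any polynomial rate, contradicting that the right-hand side stays a polynomial of fixed degree.

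The main obstacle lies in making the bootstrap rigorous when $v$ oscillates with wide excursions, because the passage from $\sup_{[T, t]} v \geq Ct^4$ to pointwise lower bounds on $v$ is delicate. An alternative route, mirroring the fourth-order treatment, propagates the divergence of $-f(v) \to -\infty$ through the decomposition $P^3_{\rm rad} = L_{\lambda_1} \circ L_{\lambda_2} \circ L_{\lambda_3}$ from Proposition~\ref{prop:decomposition}: setting $w := L_{\lambda_2} L_{\lambda_3} v$, the inhomogeneous equation $w'' - \lambda_1 w = -f(v)$ together with the superlinear blow-up of $f(v)$ drives $w \to -\infty$ via variation of parameters; repeating the step on $L_{\lambda_3} v$ and finally on $v$ itself yields $v \to -\infty$, in direct contradiction with $v \to +\infty$.
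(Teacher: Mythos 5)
Your reduction to the single case $\ell_+=+\infty$ is fine, and the sixfold-integration identity you derive from \eqref{ourODE} is correct, as is the first consequence $\sup_{[T,t]}v\geqslant Ct^{4}$. But the argument does not close, and the gap is exactly the one you flag and then do not repair. The contradiction you invoke — ``the middle integral grows beyond any polynomial rate, contradicting that the right-hand side stays a polynomial of fixed degree'' — is not available: the term $-K_{4}\int_{T}^{t}(t-r)v(r)\,dr$ sits on the same side as the superlinear term and grows superpolynomially whenever $v$ does, so nothing in the identity is ``a polynomial of fixed degree'' except $\tilde Q_{5}$, and a lower bound on $\sup_{[T,t]}v$ is in no tension with the hypothesis $v\to+\infty$. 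Reselecting $T$ does not help either, because each new $T$ produces a new $\tilde Q_{5}$ with uncontrolled coefficients $v^{(j)}(T)$. To really beat the $K_{4}$-term one must absorb it into $\int (t-r)^{5}g(v)$ (or into the $K_{2}$-term) via superlinearity of $g$; this works on $[T,t-\delta]$, but on the endpoint region $[t-\delta,t]$ the kernel $(t-r)^{5}$ degenerates and one is left with $\int_{t-\delta}^{t}v$, which is not controlled pointwise by $v(t)$ or by the earlier integrals when $v$ has large excursions — precisely the ``wide oscillation'' scenario. So the first route, as written, proves a growth estimate but no contradiction.

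The alternative route you propose does not fill the gap, because its key step is false as a general implication: from $w''-\lambda_{1}w=-f(v)\to-\infty$ on a half-line one cannot conclude $w\to-\infty$. For instance $w(t)=e^{at}$ with $0<a<\sqrt{\lambda_{1}}$ satisfies $w''-\lambda_{1}w=(a^{2}-\lambda_{1})e^{at}\to-\infty$ while $w\to+\infty$. The comparison arguments in the paper (e.g.\ Lemma~\ref{lm:comparisonprinciple} and the maximum-principle steps built on Proposition~\ref{prop:decomposition}) always use boundedness of the functions involved to kill the exponentially growing homogeneous modes, and boundedness is exactly what is unavailable here: this lemma is an ingredient of the boundedness proof, so it cannot be assumed, and $w=L_{\lambda_{2}}L_{\lambda_{3}}v$ involves fourth derivatives of a possibly exploding $v$, over which you have no a priori control. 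The paper handles the case $\ell_{+}=+\infty$ by a different device, a Mitidieri--Pohozaev-type rescaled test-function argument: multiplying the equation by cutoffs $\phi_{0}(\tau/t)$, integrating by parts so that all derivatives fall on the cutoff, and using Young's inequality to absorb $v$ against $f(v)$; this yields a uniform bound on $\int f(v)$ that contradicts $f(v(t))\to\infty$, and it is insensitive to oscillations of $v$. Your first route could be repaired along similar lines (absorbing the $K_{2}$- and $K_{4}$-terms with a cutoff and Young's inequality rather than with the degenerate kernels $(t-r)^{j}$), but as it stands the proposal has a genuine gap in both of its branches.
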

	
	\begin{proof}
		Initially, notice that by interchanging $v(t)$ by $v(-t)$, it is only necessary to consider the case $t\rightarrow+\infty$.
		
		Suppose by contradiction that the lemma does not hold. 
		Thus, one of the following two possibilities shall happen: either the asymptotic limit of $v$ is a finite constant $\ell_+>0$, which does not belong to the asymptotic set $\mathcal{A}$, or the limit blows up, that is, $\ell_+=+\infty$. 
		
		Let us consider these two cases separately:
		
		\noindent{\bf Case 1:} $\ell_+\in [0,\infty)\setminus\{0,a^{*}_n\}$. 
		
		\noindent By assumption, we have 
		\begin{equation}\label{bound1}
			\displaystyle\lim_{t\rightarrow\infty}\left(K_0v(t)-f(v(t))\right)=\kappa, \quad \hbox{where} \quad \kappa:=K_0\ell_+-f(\ell_+)\neq0,
		\end{equation}
		which implies
		\begin{equation}\label{bound2}
			K_0v(t)-f(v(t))=v^{(6)}(t)-K_4v^{(4)}(t)+K_2v^{(2)}(t).
		\end{equation}
		Hence, combining \eqref{bound1} with \eqref{bound2} implies that for any $\varepsilon>0$ there exists $T\gg1$ sufficiently large satisfying
		\begin{equation}\label{bound3}
			\kappa-\varepsilon<v^{(6)}(t)-K_4v^{(4)}(t)+K_2v^{(2)}(t)<\kappa+\varepsilon.
		\end{equation}
		
		Now, integrating \eqref{bound3}, we obtain
		\begin{equation*}
			\int_{T}^{t}(\kappa-\varepsilon)\ud\tau<\int_{T}^{t}\left[v^{(6)}(\tau)-K_4v^{(4)}(\tau)+K_2v^{(2)}(\tau)\right]\ud\tau<\int_{T}^{t}(\kappa+\varepsilon)\ud\tau,
		\end{equation*}
		which yields
		\begin{equation}\label{bound5}
			(\kappa-\varepsilon)(t-T)+C_1(T)<v^{(5)}(t)-K_4v^{(3)}(t)+K_2v^{(1)}(t)<(\kappa+\varepsilon)(t-T)+C_1(T),
		\end{equation}
		for some $C_1(T)>0$. 
		Defining $\delta:=\sup_{t\geqslant T}|v(t)-v(T)|<\infty$, we obtain
		\begin{equation*}
			\left|\int_{T}^{t}K_4v^{(3)}(\tau)\ud\tau\right|\leqslant|K_4|\delta \quad {\rm and} \quad \left|\int_{T}^{t}K_2v^{(1)}(\tau)\ud\tau\right|\leqslant|K_2|\delta,
		\end{equation*}
		which, by integrating $\eqref{bound5}$, gives us
		\begin{equation}\label{bound10}
			\frac{(\kappa-\varepsilon)}{2}(t-T)^2+L(t)<v^{(4)}(t)<\frac{(\kappa+\varepsilon)}{2}(t-T)^2+R(t),
		\end{equation}
		where $L(t),R(t)\in \mathcal{O}(t^2)$, namely
		\begin{equation*}
			L(t)=C_1(T)(T-t)-|K_4|\delta-|K_2|\delta+C_2(T)
		\end{equation*}
		and
		\begin{equation*}
			R(t)=C_1(T)(T-t)+|K_4|\delta+|K_2|\delta+C_2(T),
		\end{equation*}
		for some $C_2(T)>0$. 
		
		Then, repeating the same integration procedure in \eqref{bound10}, we find
		\begin{equation}\label{bound11}
			\frac{(\kappa-\varepsilon)}{6!}(t-T)^6+\mathcal{O}(t^6)<v(t)<\frac{(\kappa+\varepsilon)}{6!}(t-T)^6+\mathcal{O}(t^6) \quad \mbox{as} \quad t\rightarrow\infty.
		\end{equation}
		Therefore, since $\kappa\neq0$ we can choose $0<\varepsilon\ll1$ sufficiently small such that $\kappa-\varepsilon$ and $\kappa+\varepsilon$ have the same sign. Finally, by passing to the limit as $t\rightarrow\infty$ on inequality \eqref{bound11}, we obtain that $v$ blows-up and $\ell_+=\infty$, which is contradiction.
		This concludes the proof of the first case.
		
		The second case requires a suitable choice of test functions, which is inspired in \cite{MR1879326}. 
		
		\noindent{\bf Case 2:} $\ell_+=+\infty$. 
		
		\noindent Let $\phi_0\in C^{\infty}({[0,\infty]})$ be a nonnegative function satisfying $\phi_0>0$ in $[0,2)$, 
		\begin{equation*}
			\phi_0(z)=
			\begin{cases}
				1,&\ {\rm for} \ 0\leqslant z\leqslant1,\\
				0,&\ {\rm for} \ z\geqslant2,
			\end{cases}
		\end{equation*}
		and, let us fix the positive constants
		\begin{equation}\label{miti-pokh}
			M_j:=\int_{0}^{2}\frac{|\phi_0^{(j)}(z)|}{|\phi_0(z)|}\, \ud z \quad {\rm for} \quad j=1,2,3,4,5,6.
		\end{equation}
		Using the contradiction assumption, we may assume that there exists $T>0$ such that for $t>T$, it follows
		\begin{equation}\label{blow1}
			v^{(6)}(t)-K_4v^{(4)}(t)+K_2v^{(2)}(t)=K_0v(t)-f(v(t))\geqslant \frac{1}{2}f(v(t))
		\end{equation}
		and
		\begin{equation}\label{blow2}
			v^{(5)}(t)-K_4v^{(3)}(t)+K_2v^{(1)}(t)=\frac{1}{2}\int_{T}^{t}f(v(\tau))\ud\tau+C_1(T).
		\end{equation}
		Besides, as a consequence of \eqref{blow2}, we can find $T^{*}>T$ satisfying
		\begin{equation*}
			v^{(5)}(T^{*})-K_4v^{(3)}(T^{*})+K_2v^{(1)}(T^{*}):=\upsilon>0.
		\end{equation*}
		Furthermore, since \eqref{ourODE} is autonomous, we may suppose without loss of generality that $T^{*}=0$.
		Then, multiplying inequality \eqref{blow1} by  $\phi(t)=\phi_0(\tau/t)$, and by integrating, we find
		\begin{equation*}
			\int_{0}^{T'}\left[v^{(6)}(\tau)\phi(\tau)-K_4v^{(4)}(\tau)\phi(\tau)+K_2v^{(2)}(\tau)\phi(\tau)\right]\ud\tau\geqslant\frac{1}{2}\int_{0}^{T'}v(\tau)\ud\tau,
		\end{equation*}
		where $T'=2T$. Moreover, integration by parts combined with $\phi^{(j)}(T')=0$ for $j=0,1,2,3,4,5$ implies 
		\begin{equation}\label{blow4}
			\int_{0}^{T'}\left[v(\tau)\phi^{(6)}(\tau)-K_4v(\tau)\phi^{(4)}(\tau)+K_2v(\tau)\phi^{(2)}(\tau)\right]\ud\tau\geqslant\frac{1}{2}\int_{0}^{T'}f(v(\tau))\ud\tau+\upsilon.
		\end{equation}
		On the other hand, applying the Young inequality on the right-hand side of \eqref{blow4}, it follows
		\begin{equation}\label{blow5}
			v(\tau)|\phi^{(j)}(\tau)|=\varepsilon v^{\frac{n+6}{n-6}}(\tau)\phi(\tau)+C_{\varepsilon}\frac{|\phi^{(j)}(\tau)|^{\frac{n+6}{12}}}{\phi(\tau)^{\frac{n-6}{12}}} \quad {\rm for} \quad j=0,1,2,3,4,5.
		\end{equation}
		Hence, combining \eqref{blow5} and \eqref{blow4}, we have that for $0<\varepsilon\ll1$ sufficiently small, it follows that there exists $\widetilde{C}_1>0$ satisfying
		\begin{equation*}
			\widetilde{C}_1\int_{0}^{T'}\left[\frac{|\phi^{(6)}(\tau)|^{\frac{n+6}{12}}}{\phi(\tau)^{\frac{n-6}{12}}}+\frac{|\phi^{(4)}(\tau)|^{\frac{n+6}{12}}}{\phi(\tau)^{\frac{n-6}{12}}}+\frac{|\phi^{(2)}(\tau)|^{\frac{n+6}{12}}}{\phi(\tau)^{\frac{n-6}{12}}}\right]\ud\tau\geqslant\frac{1}{6}\int_{0}^{T'}f(v(\tau))\ud\tau+\upsilon.
		\end{equation*}
		Now, by \eqref{miti-pokh}, one can find $\widetilde{C}_2>0$ such that
		\begin{equation}\label{blow7}	                               \widetilde{C}_2\left(M_6T^{-\frac{n+4}{2}}-M_4T^{-\frac{n+3}{3}}+M_2T^{-\frac{n}{6}}\right)\geqslant\frac{1}{6}\int_{0}^{T}f(v(\tau))\ud\tau.
		\end{equation}
		Therefore, passing to the limit in \eqref{blow7} the left-hand side converges, whereas the right-hand side blows-up; this is a contradiction. 
		
		For proving the second part, let us notice that 
		\begin{equation*}
			\lim_{t\rightarrow\infty}\mathcal{H}(t,v)=\left(F(\ell_+)-\frac{K_0}{2}\ell_+^{2}\right)\geqslant0,
		\end{equation*}
		which implies $\ell^{*}=0$ and $\mathcal{H}(v)=0$.
	\end{proof}
	
	\begin{lemma}\label{lm:maximumandminimumpoints}
		Let $v\in C^{6}(\mathbb R)$ be a positive solution to \eqref{ourODE}.
		Assume that $v$ is a non-constant function, and denote by $\{s_{k}\}_{k\in\mathbb{N}}$ and $\{S_{k}\}_{k\in\mathbb{N}}$ its sets of local minimum and maximum points of $v$, respectively. 
		Then, it holds that  $v(S_k)>a^{*}_n$ and $v(s_{k})<a^{*}_n$ for any $k\in\mathbb N$.
	\end{lemma}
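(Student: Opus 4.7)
The plan is to derive the inequalities from an integral identity obtained by integrating the ODE over the monotone arc joining a local extremum to the adjacent critical point: the boundary contributions cancel because the odd-order derivatives of $v$ vanish at every critical point (by the symmetry of Corollary~\ref{cor:symmetry}), reducing the identity to $\int g(v)\,\ud t=0$ on an arc on which $g(v)$ has definite sign unless $v$ crosses $a_n^*$. Uniqueness from Lemma~\ref{lm:comparisonprinciple} rules out the equality case, and sign analysis then pins down on which side of $a_n^*$ each extremum lies.

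I would first rule out equality. If $v(S_k)=a_n^*$ at some local maximum, then both the constant function $a_n^*$ and $v$ are bounded solutions of \eqref{ourODE} agreeing at $S_k$ in value and first derivative, so Lemma~\ref{lm:comparisonprinciple} forces $v\equiv a_n^*$, contradicting the non-constancy hypothesis; the same argument gives $v(s_k)\neq a_n^*$. It therefore suffices to rule out $v(S_k)<a_n^*$ and $v(s_k)>a_n^*$.

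Fix a local maximum $S_k$ and set $T:=\inf\{t>S_k:v^{(1)}(t)=0\}\in(S_k,+\infty]$. By Corollary~\ref{cor:symmetry}\,(i) the bounded solution $v$ is symmetric about every zero of $v^{(1)}$, so $v^{(1)}$, $v^{(3)}$, $v^{(5)}$ all vanish at $S_k$; if $T<\infty$, the same symmetry applied at $T$ yields the analogous vanishing and forces $T$ to be a local minimum. Integrating the pointwise identity $v^{(6)}-K_4v^{(4)}+K_2v^{(2)}=-g(v)$ over $[S_k,T]$ then produces
\begin{equation*}
\bigl[v^{(5)}-K_4v^{(3)}+K_2v^{(1)}\bigr]_{S_k}^{T}=-\int_{S_k}^{T}g(v(t))\,\ud t,
\end{equation*}
and the left-hand side vanishes: directly from the symmetry data when $T<\infty$; and, when $T=+\infty$, from the exponential decay of $v$ and its derivatives toward the hyperbolic equilibrium $0$. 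The latter decay I would justify by noting that Lemmas~\ref{lm:boundededness} and~\ref{lm:asymptotics} force any finite nonnegative limit of $v$ at infinity to be an equilibrium of $g$, of which the only candidates are $0$ and $a_n^*$; the value $a_n^*$ is excluded since $G(a_n^*)<0$ is incompatible with the nonnegativity of the asymptotic energy derived in the proof of Lemma~\ref{lm:asymptotics}, while the rate of decay near $0$ is governed by the positive indicial roots exhibited in Proposition~\ref{prop:decomposition}.

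On $(S_k,T)$ the derivative $v^{(1)}$ is strictly negative, so $v$ is strictly decreasing with range $(v(T),v(S_k))$ (taking $v(T)=0$ when $T=\infty$). Since $g$ is strictly negative on $(0,a_n^*)$ and strictly positive on $(a_n^*,\infty)$, the hypothesis $v(S_k)<a_n^*$ would force $g(v)<0$ throughout this arc, contradicting $\int_{S_k}^{T}g(v)\,\ud t=0$; hence $v(S_k)>a_n^*$. The statement for local minima follows analogously: the same asymptotic argument precludes $v^{(1)}$ from keeping one sign on either half-line based at $s_k$ (a positive limit would have to be $a_n^*$, again contradicted by the energy sign), so adjacent local maxima $S_k^{\pm}$ exist, and integrating over $[s_k,S_k^+]$ (a strictly increasing arc) with the symmetric sign analysis yields $v(s_k)<a_n^*$. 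The main delicate point to be confirmed is the noncompact case $T=\infty$, where the decay of $v$ and the vanishing of the boundary terms at infinity must be justified; this rests on the hyperbolicity of the equilibrium $0$ encoded in the positivity of all indicial roots of $P^3_{\mathrm{rad}}$.
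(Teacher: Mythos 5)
Your proof is essentially correct, but it follows a genuinely different route from the paper. The paper argues by contradiction at a single extremum and never integrates the equation: assuming a local maximum $t_0$ with $v(t_0)\leqslant a^{*}_n$, it splits into the degenerate case (all even derivatives vanish, whence uniqueness forces $v\equiv a^{*}_n$) and the case $v^{(6)}(t_0)<0$, where a local sign analysis produces a later time $t_1$ at which the conserved Hamiltonian \eqref{conservationofenergy} yields $G(v(t_0))-G(v(t_1))>0$, contradicting the monotonicity of $G$ on $(0,a^{*}_n)$. You instead integrate the ODE over the monotone arc joining the extremum to the next critical point, kill the boundary terms because Corollary~\ref{cor:symmetry} forces $v^{(1)},v^{(3)},v^{(5)}$ to vanish at every zero of $v^{(1)}$ (and handle the equality case $v(S_k)=a^{*}_n$ via Lemma~\ref{lm:comparisonprinciple}, which is in fact cleaner than the paper's Case 1), and then contradict $\int g(v)\,\ud t=0$ with the strict sign of $g$ on $(0,a^{*}_n)$ resp.\ $(a^{*}_n,\infty)$. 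Your argument buys a more transparent, quadrature-free proof at the price of having to control the non-compact arc; the paper's energy comparison stays local and avoids any discussion of behavior at infinity.

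Two soft spots should be tightened. First, the vanishing of the boundary term when $T=+\infty$ is exactly the content of Lemma~\ref{lm:energylimits} (eventual monotonicity forces the limit into $\{0,\pm a^{*}_n\}$ and $v^{(j)}\to 0$ for $j=1,\dots,5$); that lemma is stated just after the present one but its proof does not use it, so you may invoke it without circularity, and this is preferable to the heuristic appeal to ``hyperbolicity and positive indicial roots,'' which as written is not a proof of decay of the derivatives. Second, your parenthetical exclusion of the limit $a^{*}_n$ via ``the nonnegativity of the asymptotic energy'' is both shaky (monotone approach to $\pm a^{*}_n$ is only ruled out in the paper by the more careful argument inside Lemma~\ref{lm:classification}) and unnecessary: under the contradiction hypotheses the monotone arc stays strictly below (resp.\ above) $a^{*}_n$, so in the half-line case the limit is automatically $0$ (resp.\ exceeds $a^{*}_n$, which already contradicts the equilibrium classification); in particular you need not first establish the existence of adjacent maxima for the minimum case. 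A final cosmetic point: one should observe that zeros of $v^{(1)}$ cannot accumulate at $S_k$ (two nearby zeros and Corollary~\ref{cor:symmetry} would make $v$ periodic with arbitrarily small period, hence constant), so the arc $(S_k,T)$ is nondegenerate.
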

	
	\begin{proof}
		Let us assume by contradiction that there exists a maximum point $t=t_{0}$ of $v$ such that $v(t_{0}) \leqslant a^{*}_n$ and $v^{(1)}(t_{0})=0$, $v^{(2)}(t_{0}) \leqslant 0$. 
		Notice that from \eqref{ourODE}, we have two possibilities which we describe as follows
		
		\noindent{\bf Case 1.} $v(t_{0})=a^{*}_n$, $v^{(2)}(t_{0})=0$, $v^{(4)}(t_{0})=0$, and $v^{(6)}(t_{0})=0$.
		
		\noindent Initially, we observe that $v^{(3)}(t_{0})=0$ and $v^{(5)}(t_{0})=0$. 
		Otherwise, without loss of generality, we may suppose that $v^{(3)}(t_{0})>0$ or $v^{(5)}(t_{0})>0$.
		If, $v^{(3)}(t_{0})>0$, by an elementary analysis, we deduce that there exists $\delta>0$ satisfying $v^{(2)}(t)>0$ and $v^{(1)}(t)>0$ in $(t_{0}, t_{0}+\delta)$.
		This contradicts the fact the point $t=t_{0}$ is a local maximum of $v(t)$.
		If, $v^{(5)}(t_{0})>0$, we can apply the same strategy.
		Then, by the uniqueness, we get that $v(t) \equiv a^{*}_n$, which is impossible.
		
		\noindent{\bf Case 2.} $v^{(6)}(t_{0})=K_{4}v^{(4)}(t_{0})-K_{2}v^{(2)}(t_{0})-g(v(t_{0}))<0$.
		
		\noindent Now, notice that for some small $\varepsilon>0$ it follows $v^{(6)}(t)<0$ for $t \in(t_{0}, t_{0}+\varepsilon)$ . 
		In this case, we assume $v^{(3)}(t_{0}) \leqslant 0$ and $v^{(5)}(t_{0}) \leqslant 0$, which gives us $v^{(1)}(t)<0$, $v^{(3)}(t)<0$ and $v^{(5)}(t)<0$ for $t \in(t_{0}, t_{0}+\varepsilon)$. 
		Now, setting
		\begin{equation*}
			t_{1}=\sup\left\{\tilde{t}>t_{0} : v^{(1)}(t)<0, \ v^{(3)}(t)<0, \ {\rm and} \ t\in(t_{0}, \tilde{t})\right\}.
		\end{equation*}
		we have that since $v$ oscillates, it follows that $t_{1}<\infty$ and either $v^{(1)}(t_{1})=0$, $v^{(3)}(t_{1})=0$, or $v^{(5)}(t_{1})=0$ because of continuity. 
		This implies that $\mathcal{E}_1(v(t_{1}))v^{(1)}(t_{1})=0$. 
		Moreover, since $v^{(2)}$ is decreasing in $(t_{0}, t_{1})$, combined with $v^{(2)}(t_{0}) \leqslant 0$, we obtain that $H_2(v)v^{(2)}$ is increasing in $(t_{0}, t_{1})$.
		Putting these together and remembering that
		$v^{(1)}(t_{0})=0$, it follows from Proposition~\ref{prop:conservation} that the equality holds
		\begin{equation*}
			\frac{1}{2}{v^{(3)}}(t_{1})^2-\frac{K_{2}}{2}v^{(1)}(t_{1})^2+G(v(t_{1}))=\frac{1}{2}{v^{(3)}}(t_{0})^2+G(v(t_{0})),
		\end{equation*}
		which yields
		\begin{equation*}
			G(v(t_{0}))-G(v(t_{1}))=\frac{1}{2}({v^{(3)}}(t_{1})-{v^{(3)}}(t_{0}))^2-\frac{K_{2}}{2}v^{(1)}(t_{1})^2>0.
		\end{equation*}
		On the other hand, since $v(t_{1})<v(t_{0}) \leqslant a^{*}_n$, it is straightforward to see that $G(v(t_{1}))>G(v(t_{0}))$; this contradicts the last identity.
		The proof of the first case is finished.
		
		Similarly, we can prove the second part for local minimum points of $v$, and this concludes the proof of the lemma.
	\end{proof}
	
	The following lemma shows that when a solution tends to a limit monotonically, then it has to be an equilibrium point. 
	\begin{lemma}\label{lm:energylimits}
		Let $v \in C^{6}(\mathbb{R})$ be a bounded solution to \eqref{ourPDE}. If $v$ is eventually monotone at infinity, that is, $\lim_{t \rightarrow \pm\infty}{\rm sign}(v^{(1)}(t))\neq0$, then
		\begin{equation*}
			\lim_{t \rightarrow \pm\infty} v(t) \in\{0, \pm a^{*}_n\} \quad {\rm and } \quad \lim_{t \rightarrow \pm\infty} v^{(j)}(t)=0 \quad {\rm for} \quad j=1,\dots,5.
		\end{equation*}
		In other terms,
		\begin{equation*}
			\mathcal{A}(v)=\{0, \pm a^{*}_n\} \quad {\rm and} \quad \mathcal{A}(v^{(j)})=\{0\} \quad {\rm for} \quad j=1,\dots,5.
		\end{equation*}
	\end{lemma}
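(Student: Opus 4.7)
The plan is to argue by a compactness ($\omega$-limit set) argument at infinity, combined with the fact that once all derivatives of $v$ are uniformly bounded on a tail, every subsequential limit of its translates is automatically a constant solution of \eqref{ourODE}. By symmetry under $t\mapsto -t$, it is enough to treat the case $t\to+\infty$. By Lemma~\ref{lm:boundededness}, $v$ is bounded on $\mathbb{R}$, and by hypothesis $v^{(1)}$ has a definite nonzero sign on some half-line $[T_0,\infty)$, so $v$ is strictly monotone there. Hence $\ell_+:=\lim_{t\to+\infty}v(t)\in\mathbb{R}$ exists and is finite (by monotone convergence, or equivalently by Lemma~\ref{lm:asymptotics}).

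First, I would show that all derivatives $v^{(j)}$, $j=1,\dots,6$, are uniformly bounded on $[T_0,\infty)$. Rewriting \eqref{ourODE} as $v^{(6)}=K_4 v^{(4)}-K_2 v^{(2)}-g(v)$ and using the half-line Landau--Kolmogorov interpolation inequalities $\sup_{[T_0,\infty)}|v^{(j)}|\leqslant C_j\|v\|_\infty^{1-j/6}\|v^{(6)}\|_\infty^{j/6}$ for $j=1,\dots,5$, together with Young's inequality to absorb the terms involving $\|v^{(6)}\|_\infty$ back into the left-hand side, closes a bootstrap and yields a finite bound on $\|v^{(6)}\|_\infty$ depending only on $\|v\|_\infty$ and the dimensional constants $K_0,K_2,K_4$. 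Boundedness of $v^{(1)},\dots,v^{(5)}$ then follows by the same interpolation.

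Next, I would run the compactness argument. Fix any sequence $t_n\to+\infty$ and set $v_n(t):=v(t+t_n)$. By the uniform bounds just derived and the Arzel\`a--Ascoli theorem, along a subsequence $v_n\to\bar v$ in $C^5_{\mathrm{loc}}(\mathbb{R})$; since \eqref{ourODE} is autonomous and its right-hand side is continuous, $\bar v\in C^6(\mathbb{R})$ is itself a solution. But for each fixed $t$, $v_n(t)=v(t+t_n)\to\ell_+$, so $\bar v\equiv\ell_+$. Substituting the constant $\ell_+$ into \eqref{ourODE} yields $g(\ell_+)=0$, so $\ell_+\in\{0,\pm a^{*}_n\}$. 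The $C^5_{\mathrm{loc}}$ convergence additionally gives $v^{(j)}(t_n)=v_n^{(j)}(0)\to\bar v^{(j)}(0)=0$ for $j=1,\dots,5$; since the sequence $t_n$ was arbitrary, $\lim_{t\to+\infty}v^{(j)}(t)=0$, as required.

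The main obstacle is the derivative-boundedness step. A priori, the Landau--Kolmogorov inequalities control $\|v^{(j)}\|_\infty$ only in terms of $\|v\|_\infty$ and $\|v^{(6)}\|_\infty$, while the latter is itself controlled only in terms of $\|v^{(2)}\|_\infty$ and $\|v^{(4)}\|_\infty$ through the ODE; closing this circular estimate requires a careful use of the specific interpolation exponents. An alternative route, avoiding Landau--Kolmogorov, would be to invoke the conservation of energy from Proposition~\ref{prop:conservation} and iteratively control the derivatives via the second-order decomposition of $P^3_{\mathrm{rad}}$ supplied by Proposition~\ref{prop:decomposition}, since each factor $L_{\lambda_j}$ admits an explicit Green's representation on $[T_0,\infty)$ that selects the bounded solution and thereby propagates the boundedness of $v$ to that of $L_{\lambda_3}v$, $L_{\lambda_2}L_{\lambda_3}v$, and so on.
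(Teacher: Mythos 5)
Your proof is correct, but it takes a genuinely different route from the paper's. The paper argues by direct ODE sign analysis: it introduces the auxiliary quantity $\Upsilon_1(v)=v^{(5)}-K_4v^{(3)}+K_2v^{(1)}$, splits into the cases $g^{(1)}(\ell_0)\neq0$ and $g^{(1)}(\ell_0)=0$, and uses eventual-sign/comparison arguments to show successively that $v^{(2)},v^{(4)}$, then $v^{(1)},v^{(3)}$, tend to zero, finally reading off $g(\ell_0)=0$ (hence $\ell_0\in\{0,\pm a^{*}_n\}$) from the equation together with boundedness. You instead use translation compactness: uniform bounds for $v^{(1)},\dots,v^{(6)}$ on a tail, Arzel\`a--Ascoli, and the rigidity observation that any locally uniform limit of translates must equal the constant $\ell_+$, hence be an equilibrium; this yields a shorter and more robust argument that avoids the paper's case distinction, at the cost of the derivative-bound step. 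That step is fine in substance, with one caveat you should make explicit: to absorb $\|v^{(6)}\|_\infty$ via Young's inequality you must know a priori that it is finite, which is not given on the half-line; the standard remedy is to apply the additive interval form of the Landau--Kolmogorov inequality on intervals of a fixed small length (where all norms are finite since $v\in C^{6}(\mathbb R)$), choosing the length so that the coefficient in front of $\|v^{(6)}\|$ is less than one, which produces a bound uniform over the tail --- or, as you suggest, one can instead propagate boundedness through the factorization of $P^3_{\rm rad}$ from Proposition~\ref{prop:decomposition}. The remaining steps are sound: the limit of translates solves \eqref{ourODE} because the right-hand side converges locally uniformly, a constant solution forces $g(\ell_+)=0$, and the arbitrary-sequence argument upgrades the subsequential limits $v^{(j)}(t_n)\to0$, $j=1,\dots,5$, to full limits as $t\to\pm\infty$.
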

	
	\begin{proof}
		By interchanging $v(t)$ by $v(-t)$, we may assume that $\lim_{t \rightarrow +\infty}{\rm sign}(v^{(1)}(t))\neq0$. 
		Then, there exists the limit
		\begin{equation*}
			\lim_{t\rightarrow +\infty} v(t):=\ell_{0}
		\end{equation*}
		and $v$ increases towards $\ell_{0}$ as $t \rightarrow +\infty$. Since $v$ is bounded, $\ell_{0}<+\infty$.

		\noindent{\bf Claim 1:} $\lim_{t\rightarrow}v^{(4)}(t)=0$ and $\lim_{t\rightarrow}v^{(2)}(t)=0$ as $t\rightarrow+\infty$.
		
		\noindent Let us define the function $\Upsilon_1(v)=v^{(5)}-K_4v^{(3)}+K_2v^{(1)}$, which satisfies $\Upsilon_1^{(2)}(v)=v^{(1)}g(v)$.
		Now we have to consider two cases:
		
		\noindent{\bf Case 1:} $g^{(1)}(\ell_{0}) \neq 0$
		
		\noindent  
		In this case, $g^{(1)}(v)$ has a sign for $T\gg1$ large enough, that is, either $g^{(1)}(v) \geqslant 0$ or $g^{(1)}(v) \leqslant 0$ for large $T\gg1$.
		In addition, since $v^(1) \geqslant 0$, it follows $\lim_{t\rightarrow\infty} {\rm sign}(\Upsilon_1^{(2)}(v(t)))\neq0$, which implies that $\Upsilon_1^{(2)}(v(t))$ has a sign for $T\gg1$ large enough; thus by a comparison principle $\Upsilon_1(v(t))$ also does. 
		Furthermore, because $\Upsilon_1(v(t))=v^{5}(t)-K_4v^{(3)}(t)+K_2v^{(1)}(t)$ has a sign for $T\gg1$ large enough, we find the the following limit exists:
		\begin{equation*}
			\lim_{t \rightarrow +\infty}\left(v^{(4)}(t)-K_4v^{(2)}(t)+K_2v(t)\right):=\ell_{1}.
		\end{equation*}
		Consequently, we get
		\begin{equation}\label{limits1}
			\lim_{t \rightarrow +\infty}\left(v^{(4)}(t)-K_4v^{(2)}(t)\right):=\ell_{1}-K_2\ell_0.
		\end{equation}
		Defining $\Upsilon_{2}(v):=v^{(2)}-K_4v$, we get $\lim_{t \rightarrow +\infty}\Upsilon_{2}^{(2)}(v)\neq0$, that is, $\Upsilon_{2}(v(t))$ has a sign for $T\gg1$ large enough.
		Hence, by a comparison principle $\Upsilon_2(v(t))$ also does. 
		Moreover, since $\Upsilon_2(v(t))=v^{(2)}(t)-K_4v(t)$ has a sign for $T\gg1$ large enough, we get that the the following limit exists
		\begin{equation*}
			\lim_{t \rightarrow +\infty}\left(v^{(2)}(t)-K_4v(t)\right):=\ell_{2}.
		\end{equation*}
		Therefore, we conclude $v^{(2)}(t)\rightarrow \ell_{2}+K_4\ell_{0}$ as $t \rightarrow \infty$, which by the boundedness of $v$, proves that $v^{(2)}(t)\rightarrow0$ as $t\rightarrow+\infty$.
		Finally, going back to \eqref{limits1}, we find $v^{(4)}(t)\rightarrow0$ as $t\rightarrow+\infty$.
		
		\noindent{\bf Case 2:} $g^{(1)}(\ell_{0})=0$
		
		\noindent In this case, we can define $\widetilde{\Upsilon}_1(v)=v^{(5)}-K_4v^{(3)}+\frac{K_2}{2}v^{(1)}$ and $\widetilde{\Upsilon}_2(v)=v^{(4)}-\frac{K_4}{2}v^{(2)}$ and proceed as before to conclude the proof.
		
		We finished the proof of the first claim
		
		At last, using that $v^{(2)}(t) \rightarrow 0$ and $v^{(4)}(t) \rightarrow 0$ as $t\rightarrow+\infty$, one can prove that $v^{(1)}(t) \rightarrow 0$ and $v^{(3)}(t) \rightarrow 0$ as $t\rightarrow+\infty$.
		Since, we can write \eqref{ourODE} as
		\begin{equation*}
			v^{(6)}=K_{4}v^{(4)}-K_{2}v^{(2)}-g(v) \quad {\rm in} \quad \mathbb R,
		\end{equation*}
		it holds
		\begin{equation*}
			\lim_{x \rightarrow +\infty} v^{(6)}(t):=\ell_3=g(\ell_{0}).
		\end{equation*}
		Therefore, $v$ is bounded, we find $\ell_3=0$, and thus $\ell_{0} \in \mathcal{A}(v)$. 
		By this discussion, it is easy now to see that $\mathcal{A}(v^{(j)})=0$ for $j=1,2,3,4,5$, which concludes the proof of the lemma.
	\end{proof}
	
	\subsection{Ordering}
	Now we prove that, as in the lower order cases, the Hamiltonian energy is a parameter that orders bounded solutions in the $(v, v^{(1)})$-phase plane.
	
	Our first technical lemma is a sign property for some terms of the Hamiltonian energy associated to \eqref{ourODE}, which will be important in our upcoming arguments.
	Although, our results ares similar in spirit to the one in \cite[Lemma 5]{MR1740359}, we provide a shorter proof, which also works in our sixth order situation.
	\begin{lemma}\label{lm:signproperty}
		Let $v\in C^{6}(\mathbb R)$ be a bounded solution to \eqref{ourODE}. The following sign identities hold
		\begin{itemize}
			\item[{\rm (i)}] ${\rm sign}(v^{(1)})={\rm sign}(\mathcal{E}_1(v))$;
			\item[{\rm (ii)}] ${\rm sign}(v^{(2)})={\rm sign}(\mathcal{E}_2(v))$.
		\end{itemize}
		where $\mathcal{E}_1(v),\mathcal{E}_2(v)$ the auxiliary functions given by \eqref{auxiliarysignfunctions}.
	\end{lemma}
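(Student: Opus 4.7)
The plan is to combine the conservation of energy from Proposition~\ref{prop:conservation}, the symmetry result Corollary~\ref{cor:symmetry}, and the decomposition \eqref{decomposition} of $P^3_{\rm rad}$ into second-order factors $L_{\lambda_j}=\partial_t^{(2)}-\lambda_j$ with positive indicial parameters. As a first step, I would show that $v^{(1)}$ and $\mathcal{E}_1(v)$ vanish simultaneously: at any critical point $t_0$ of $v$, Corollary~\ref{cor:symmetry}~(i) gives the even symmetry $v(t_0+\tau)=v(t_0-\tau)$, which forces all odd-order derivatives to vanish at $t_0$. In particular $v^{(1)}(t_0)=v^{(3)}(t_0)=v^{(5)}(t_0)=0$, and substitution into \eqref{auxiliarysignfunctions} gives $\mathcal{E}_1(v)(t_0)=0$. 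Analogously, at any zero $t_0$ of $v$, Corollary~\ref{cor:symmetry}~(ii) provides the anti-symmetry $v(t_0+\tau)=-v(t_0-\tau)$, so all even-order derivatives vanish at $t_0$, yielding $v^{(2)}(t_0)=v^{(4)}(t_0)=0$ and hence $\mathcal{E}_2(v)(t_0)=0$.

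Next, on each interval $[s,t]$ between consecutive critical points of $v$, integration by parts together with the above vanishing of $v^{(1)}$ and $v^{(3)}$ at the endpoints produces the identity
\begin{equation*}
\int_{s}^{t} v^{(1)}\,\mathcal{E}_1(v)\,\ud\tau = \int_{s}^{t}\left[(v^{(3)})^2 + K_4\,(v^{(2)})^2 + \tfrac{K_2}{2}\,(v^{(1)})^2\right]\ud\tau > 0,
\end{equation*}
where strict positivity uses $K_4,K_2>0$, which follow from the Vieta relations and the positivity of the indicial roots \eqref{indicialroots}. To upgrade this integral positivity to the desired pointwise sign relation $v^{(1)}\mathcal{E}_1(v)\geq 0$, I would use the factorization $\mathcal{E}_1(v)=L_{\mu_+^2}\circ L_{\mu_-^2}(v^{(1)})$, where $\mu_\pm^2=(K_4\pm\sqrt{K_4^2-2K_2})/2>0$ since $K_4^2-2K_2=\lambda_1^2+\lambda_2^2+\lambda_3^2>0$ and $K_2>0$. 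On the fixed monotonicity interval $[s,t]$, where $v^{(1)}$ has constant sign and vanishes at the endpoints along with its third derivative, the comparison principle for each factor $L_{\mu_\pm^2}$ (whose bounded Green function is strictly negative) combined with the integral identity forces $\mathcal{E}_1(v)$ to carry the same sign as $v^{(1)}$ throughout $[s,t]$.

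Part~(ii) follows the same outline with a significant simplification: $\mathcal{E}_2(v)=-L_{K_4/2}(v^{(2)})$ involves only a single second-order factor, with positive parameter $K_4/2>0$. The analogous integration-by-parts identity over an interval between consecutive zeros of $v$, together with the maximum principle for $L_{K_4/2}$, delivers $v^{(2)}\,\mathcal{E}_2(v)\geq 0$. The main obstacle in this program is the final step of converting the integral positivity into pointwise sign information, which is delicate because the bounded fundamental solution $G_{\lambda}(t)=-(2\sqrt{\lambda})^{-1}e^{-\sqrt{\lambda}|t|}$ of $L_\lambda$ only provides sign control on the entire real line, not a priori on a bounded subinterval. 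This is precisely where the boundary data supplied by Corollary~\ref{cor:symmetry} enters: it renders the situation compatible with a Dirichlet-type maximum principle on each monotonicity interval, and the successive inversion of the two factors $L_{\mu_\pm^2}$ then propagates the sign from $v^{(1)}$ to $\mathcal{E}_1(v)$ without loss.
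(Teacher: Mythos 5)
Your setup reproduces the skeleton of the paper's proof, and in one respect streamlines it: since $K_4^2-2K_2=\lambda_1^2+\lambda_2^2+\lambda_3^2>0$, the direct factorizations $\mathcal{E}_1(v)=(L_{\mu_+^2}\circ L_{\mu_-^2})(v^{(1)})$ and $\mathcal{E}_2(v)=-L_{K_4/2}(v^{(2)})$ are legitimate, whereas the paper first passes to auxiliary functions $\Psi_{1,\alpha_1},\Psi_{2,\alpha_2}$ with adjustable constants; likewise the endpoint vanishing of $v^{(1)},v^{(3)},v^{(5)}$ on monotonicity intervals via Corollary~\ref{cor:symmetry} and Lemma~\ref{lm:energylimits} is exactly the boundary information the paper uses. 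The gap is in the one step that constitutes the actual proof: passing from the sign of $v^{(1)}$ on such an interval to the \emph{pointwise} sign of $\mathcal{E}_1(v)$. A maximum or comparison principle for the factors $L_{\mu_\pm^2}$ with your Dirichlet data runs in the opposite direction: it deduces the sign of the solution $v^{(1)}$ from a sign of the source $(L_{\mu_+^2}\circ L_{\mu_-^2})(v^{(1)})=\mathcal{E}_1(v)$ --- this is how the paper argues, by contradiction, assuming a bad sign for $\Psi_{1,\alpha_1}(v)$ and contradicting the known sign of $v^{(1)}$ --- but it cannot ``force'' the source to inherit the sign of the solution. Your integral identity only says that the average of $v^{(1)}\mathcal{E}_1(v)$ over the interval is positive, which is far weaker than the pointwise claim. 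Worse, the implication as you state it is false for functions satisfying only your hypotheses: for $a,b>0$ and $N$ a large integer, $w(x)=\sin x+N^{-2}\sin(Nx)$ is positive on $(0,\pi)$ and vanishes at both endpoints together with $w''$ (and $w^{(4)}$), yet $(L_a\circ L_b)w=(1+a)(1+b)\sin x+N^{-2}(N^2+a)(N^2+b)\sin(Nx)$ changes sign there. So any correct argument must exploit that $v^{(1)}$ is not an arbitrary such function but the derivative of a bounded solution of \eqref{ourODE}; your outline never invokes the equation at this point, and ``the comparison principle combined with the integral identity'' does not bridge the gap.

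Part (ii) has an additional mismatch. The statement concerns the sign of $v^{(2)}$, so the relevant intervals are those on which $v^{(2)}$ keeps a fixed sign, i.e.\ intervals between zeros of $v^{(2)}$, and the endpoint information you need there is $v^{(4)}=0$ (this is precisely what the paper secures before applying the maximum principle to the single factor acting on $v^{(2)}$). Anti-symmetry at zeros of $v$ gives you nothing of the sort: the positive solutions to which the lemma is ultimately applied have no zeros of $v$ at all, while $v^{(2)}$ oscillates, and on an interval where $v^{(2)}$ changes sign no integral identity can yield a pointwise sign relation. So both halves of the proposal stop exactly where the real work lies.
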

	
	\begin{proof}
		The proof will be divided into three steps.
		
		\noindent {\bf Step 1.} There exist $\alpha_1,\alpha_2>0$ (depending only on $K_2,K_4$) such that
		\begin{equation*}
			0<\Psi_{1,\alpha_1}(v)v^{(1)}<\mathcal{E}_1(v)v^{(1)} \quad {\rm and} \quad 0<\Psi_{2,\alpha_2}(v)v^{(2)}<\mathcal{E}_2(v)v^{(2)},
		\end{equation*}
		where
		\begin{equation*}
			\Psi_{1,\alpha_1}(v)=v^{(5)}-{K_4}v^{(3)}+\alpha_1 v^{(1)} \quad {\rm and} \quad \Psi_{2,\alpha_2}(v)=-v^{(4)}+\alpha_2 v^{(2)}.
		\end{equation*}
		
		Notice that since
		\begin{equation*}
			\mathcal{E}_1(v)v^{(1)}= \Psi_{1,\alpha_1}(v)v^{(1)}-\sqrt{\left(\frac{K_2}{2}-\alpha_1\right)}{v^{(1)}}^2,
		\end{equation*}
		and
		\begin{equation*}
			\mathcal{E}_2(v)v^{(2)}= \Psi_{2,\alpha_2}(v)v^{(2)}-\sqrt{\left(\frac{K_4}{2}-\alpha_2\right)}{v^{(2)}}^2,
		\end{equation*}
		there exists  $C_1,C_2\geqslant0$ (depending only on $K_0,K_2$) such that 
		\begin{equation}\label{ordering8}
			\mathcal{E}_1(v)v^{(1)}= \Psi_{1,\alpha_1}(v)v^{(1)}+\sqrt{C_1}{v^{(1)}}^2> \Psi_{1,\alpha_1}(v)v^{(1)},
		\end{equation}
		and
		\begin{equation}\label{ordering4}
			\mathcal{E}_2(v)v^{(2)}= \Psi_{2,\alpha_2}(v)v^{(2)}+\sqrt{C_2}{v^{(2)}}^2> \Psi_{2,\alpha_2}(v)v^{(2)},
		\end{equation}
		which proves the first claim.
		
		Now, we are left to study the sign of these auxiliary terms. 
		
		Let us start with the term related to the first derivative. 
		For this, we observe that $\Psi_{1,\alpha_1}(v)$ can be written as a positive fourth order operator on $v^{(1)}$.
		
		\noindent {\bf Step 2.} ${\rm sign}(v^{(1)})={\rm sign}(\Psi_{1,\alpha_1}(v))$.
		
		We divide the remaining part of the proof into will be divided into two cases.
		
		\noindent{\bf Case 1.} If $v^{(1)}\geqslant0$, then $\Psi_{1,\alpha_1}(v)\geqslant0$.
		
		\noindent Let $t_{0} \in \mathbb{R}$ be arbitrary. 
		We may assume that $v^{(1)}(t_{0}) \geqslant 0$. 
		We see from Corollary~\ref{cor:symmetry} that Claim 2 holds
		if $v^{(1)}(t_{0})=0$. 
		We thus assume that $v^{(1)}(t_{0})>0$. 
		Since $v$ is bounded there exist $-\infty \leqslant t_{1}<t_{0}<t_{2} \leqslant \infty$ such that $v^{(1)}(t_{1})=v^{(1)}(t_{2})=0$ and $v^{(1)}<0$ in $(t_1,t_2 )$. By Corollary~\ref{cor:symmetry} and Lemma~\ref{lm:energylimits}, we get that $v^{(3)}(t_{1})=v^{(3)}(t_{2})=0$, which implies  $\Psi_{1,\alpha_1}(v(t_{1}))=\Psi_{1,\alpha_1}(v(t_{2}))=0$.
		Thus, we arrive at the following system
		\begin{equation}\label{auxiliarsystem1}
			\begin{cases}
				(L_{\alpha_1^+}\circ L_{\alpha_1^-})(v^{(1)})=\Psi_{1,\alpha_1}(v) \quad {\rm in} \quad (t_1,t_2).\\
				L_{\alpha^-_1}(v^{(1)}(t_{1}))=L_{\alpha^-_1}(v^{(1)}(t_{2}))=v^{(1)}(t_1)=v^{(1)}(t_2)=0,
			\end{cases} 
		\end{equation}
		where $\alpha_1^++\alpha_1^-=K_4$ and $\alpha_1^+\alpha_1^-=\alpha_1$.
		
		Next, suppose by contradiction that $\Psi_{1,\alpha_1}(v(t_0))\leqslant0$.
		Hence, using the maximum principle in \eqref{auxiliarsystem1}, it follows that $v^{(1)}(t_0)<0$, which is a contradiction.
		The first case is proved.
		
		For the second case, the idea is similar.
		
		\noindent{\bf Case 2.} If $v^{(1)}<0$, then $\Psi_{1,\alpha_1}(v)<0$.
		
		\noindent 
		When $v^{(1)}(t_0)<0$, as before one can find $-\infty \leqslant t_{1}<t_{0}<t_{2} \leqslant \infty$ such that $v^{(1)}<0$ in $(t_1,t_2 )$ and \eqref{auxiliarsystem1} holds.
		Using the same contradiction argument, suppose that there exists $t_0\in(t_1,t_2)$ such that $\Psi_{1,\alpha_1}(v(t_0))\geqslant0$.
		By the minimum principle applied to \eqref{auxiliarsystem1}, we find $v^{(1)}(t_0)\geqslant 0$, which is also a contradiction.
		This finishes the proof of the second claim.
		
		The study of the remaining term is less complicated and direct application of a comparison principle.
		Indeed, notice that $\Psi_{2,\alpha_2}(v)$ can be written as a positive second order operator on $v^{(2)}$.
		
		\noindent{\bf Step 3.} ${\rm sign}(v^{(2)})={\rm sign}(\Psi_{2,{\alpha_2}}(v))$.
		
		\noindent As before, we must study two cases as follows.
		
		\noindent{\bf Case 1.} If $v^{(2)}\geqslant0$, then  $\Psi_{2,{\alpha_2}}(v)\geqslant0$.
		
		\noindent In fact, we may assume that $v^{(2)}(t_{0}) \geqslant 0$.
		We know from Corollary~\ref{cor:symmetry} that Claim 2 holds
		if $v^{(2)}(t_{0})=0$. 
		We thus assume that $v^{(2)}(t_{0})>0$. 
		Since $v$ is bounded there exist $-\infty \leqslant t_{1}<t_{0}<t_{2} \leqslant \infty$ such that $v^{(2)}(t_{1})=v^{(2)}(t_{2})=0$ and $v^{(2)}>0$ in $(t_1,t_2 )$. 
		By Corollary~\ref{cor:symmetry} and Lemma~\ref{lm:energylimits}, we get that $v^{(4)}(t_{1})=v^{(4)}(t_{2})=0$, which implies $L_{\alpha_2}(v^{(2)}(t_1))= L_{\alpha_2}(v^{(2)}(t_2))=0$.
		
		Now, it is convenient to write
		\begin{equation}\label{auxiliarsystem2}
			\begin{cases}
				-L_{\alpha_2}(v^{(2)})=\Psi_{2,{\alpha_2}}(v) \quad {\rm in} \quad (t_1,t_2)\\
				L_{\alpha_2}(v^{(2)}(t_1))= L_{\alpha_2}(v^{(2)}(t_2))=0.
			\end{cases}
		\end{equation}
		It is straightforward to check that $\Psi_{2,{\alpha_2}}(v(t_0)>0$.
		Otherwise, we would have $\Psi_{2,{\alpha_2}}(v(t_0)<0$.
		Thus, by the maximum principle, $v^{(2)}(t_{0})<0$, which contradicts the assumption.
		
		The second case has a similar proof.
		
		\noindent{\bf Case 2.} If $v^{(2)}<0$, then $\Psi_{2,{\alpha_2}}(v)<0$.
		
		\noindent When $v^{(2)}(t_0)<0$, as before one can find $-\infty \leqslant t_{1}<t_{0}<t_{2} \leqslant \infty$ such that $v^{(2)}<0$ in $(t_1,t_2 )$ and \eqref{auxiliarsystem2} holds.
		Again, the same argument by contradiction based on the minimum principle yields $\Psi_{2,\alpha_2}(v(t_0))<0$.
		
		The conclusion is a direct consequence of Steps 1, 2 and 3.
	\end{proof}
	
	\begin{remark}
		Using the same ideas one can also verify  the sign identity below
		\begin{equation}\label{signidentityauxiliary}
			{\rm sign}(v^{(1)})={\rm sign}(\mathcal{E}_1(v)-v^{(5)})={\rm sign}(\Psi_{1,\alpha_1}(v)-v^{(5)})={\rm sign}(-K_4v^{(3)}+\alpha_1 v^{(1)}),
		\end{equation}
		where $\alpha_1\in(0,\frac{K_2}{2}]$ is such that \eqref{ordering8} holds.
	\end{remark}

	As a consequence of the last lemma, we find two energy inequalities for solutions to \eqref{ourODE}.
	\begin{corollary}\label{cor:energyidentity}
		Let $v\in C^{6}(\mathbb R)$ be a bounded solution to \eqref{ourODE}. The following energy identity holds:
		\begin{equation}\label{signfunction}
			-\mathcal{R}(v):=-\mathcal{H}(v)+G(v)+\frac{1}{2}{v^{(3)}}^2\leqslant0.
		\end{equation}
	\end{corollary}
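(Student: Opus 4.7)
The plan is to unwind the definition of $\mathcal{R}(v)$ using the factorized form \eqref{energyfactorized} of the Hamiltonian energy and then conclude via a sign check coming from Lemma~\ref{lm:signproperty}.

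First, I would substitute the identity
\begin{equation*}
	\mathcal{H}(v) = \frac{1}{2}{v^{(3)}}^2 + \mathcal{E}_2(v)\,v^{(2)} + \mathcal{E}_1(v)\,v^{(1)} + G(v)
\end{equation*}
into the definition of $\mathcal{R}(v)$. The terms $\frac{1}{2}{v^{(3)}}^2$ and $G(v)$ cancel, leaving the clean expression
\begin{equation*}
	-\mathcal{R}(v) = -\bigl(\mathcal{E}_1(v)\,v^{(1)} + \mathcal{E}_2(v)\,v^{(2)}\bigr).
\end{equation*}
So the corollary reduces to proving the pointwise nonnegativity $\mathcal{E}_1(v)\,v^{(1)} + \mathcal{E}_2(v)\,v^{(2)} \geqslant 0$ for every bounded solution $v$ to \eqref{ourODE}.

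Next, I would invoke Lemma~\ref{lm:signproperty}, which asserts ${\rm sign}(v^{(1)}) = {\rm sign}(\mathcal{E}_1(v))$ and ${\rm sign}(v^{(2)}) = {\rm sign}(\mathcal{E}_2(v))$. Applying these term by term gives $\mathcal{E}_1(v)\,v^{(1)} \geqslant 0$ and $\mathcal{E}_2(v)\,v^{(2)} \geqslant 0$ at every point $t\in\mathbb{R}$. Summing these two nonnegative quantities yields exactly the inequality \eqref{signfunction}.

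The real work has already been done upstream: the delicate step is the sign propagation contained in Lemma~\ref{lm:signproperty}, whose proof relies on decomposing $\Psi_{1,\alpha_1}(v)$ and $\Psi_{2,\alpha_2}(v)$ as second order operators with a comparison/maximum principle and exploiting the boundary behavior supplied by Corollary~\ref{cor:symmetry} and Lemma~\ref{lm:energylimits}. Once that machinery is in place, the proof of the corollary itself is a purely algebraic manipulation together with termwise sign inspection, with no further obstacle to anticipate.
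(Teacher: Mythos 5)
Your proposal is correct and follows exactly the paper's own argument: rewriting $\mathcal{H}(v)$ via \eqref{energyfactorized} to obtain $-\mathcal{R}(v)=-[\mathcal{E}_1(v)v^{(1)}+\mathcal{E}_2(v)v^{(2)}]$ and then applying the sign identities of Lemma~\ref{lm:signproperty} termwise. No gaps; this is the same one-line reduction the paper uses.
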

	
	\begin{proof}
		Initially, notice that using \eqref{energyfactorized}, we get
		\begin{equation}\label{signfunction2}
			-\mathcal{R}(v):=-[\mathcal{E}_1(v)v^{(1)}+\mathcal{E}_2(v)v^{(2)}],
		\end{equation}
		which combined with Lemma~\ref{lm:signproperty} implies \eqref{signfunction}, and so the proof is concluded.
	\end{proof}
	
	The next comparison lemma is a preparation for proving the energy ordering.
	Subsequently, we will focus on the situation $v_{1}^{(1)}(0)>v_{2}^{(1)}(0) \geqslant 0$.
	By symmetry, the same strategy applies otherwise.
	This result is a weaker version of Lemma~\ref{lm:comparisonprinciple}, and strongly   relies on the energy identity in Corollary~\ref{cor:energyidentity}.
	It shows that to prove uniqueness of solutions to \eqref{ourODE}, one only needs three initial conditions instead of six as usual. 
	
	\begin{lemma}\label{lm:ordering-comparison}
		Let $v_1, v_2 \in C^{6}(\mathbb{R})$ be bounded solutions to \eqref{ourODE}.
		Suppose that $\mathcal{H}(v_{1}) \leqslant \mathcal{H}(v_{2})$, and
		\begin{align}\label{intitialconditionsordering}
			v_{1}(0)=v_{2}&(0), \quad v_{1}^{(1)}(0)>v_{2}^{(1)}(0) > 0, \quad v_{1}^{(2)}(0)=v_{2}^{(2)}(0), \quad {\rm and} \quad v_{1}^{(3)}(0)= v_{2}^{(3)}(0).
		\end{align}
		Then, $v_{1} \equiv v_{2}$.
	\end{lemma}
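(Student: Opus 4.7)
The statement must be read as a contradiction setup: since $v_1^{(1)}(0) > v_2^{(1)}(0)$ already rules out $v_1 \equiv v_2$, my plan is to assume $v_1 \not\equiv v_2$, derive $v_1 \equiv v_2$ from the hypotheses, and conclude that the two are incompatible. Setting $w = v_1 - v_2$, the matched initial data yield $w(0) = w^{(2)}(0) = w^{(3)}(0) = 0$ together with $w^{(1)}(0) > 0$, so Taylor expansion gives $w > 0$ on some right-neighborhood $(0,\tau)$. Monotonicity of $f$ together with the decomposition $P^{3}_{\rm rad} = L_{\lambda_1} \circ L_{\lambda_2} \circ L_{\lambda_3}$ from Proposition~\ref{prop:decomposition} then produces
\begin{equation*}
-L_{\lambda_3}\bigl(\Phi_2(v_1) - \Phi_2(v_2)\bigr) > 0 \quad \text{on } (0,\tau),
\end{equation*}
which is the analogue of the relation used in the chain argument of Lemma~\ref{lm:comparisonprinciple}.

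The difficulty is that a direct computation of the auxiliary functions \eqref{auxiliaryfunctions} at $t=0$ gives $(\Phi_1(v_1)-\Phi_1(v_2))(0) = 0$ but $(\Phi_1(v_1)-\Phi_1(v_2))^{(1)}(0) = -\lambda_1(v_1^{(1)}(0)-v_2^{(1)}(0)) < 0$, which is the \emph{wrong} sign to feed into the comparison chain. The purpose of the energy hypothesis $\mathcal{H}(v_1) \leqslant \mathcal{H}(v_2)$ is precisely to compensate at the higher step. Writing $\mathcal{H}$ in the factorized form \eqref{energyfactorized} and evaluating at $t=0$, the equalities of $v(0)$, $v^{(2)}(0)$, $v^{(3)}(0)$ cancel the $G$, $(v^{(3)})^2$ and $\tfrac{K_4}{2}(v^{(2)})^2$ contributions, so the scalar bound reduces to an inequality coupling $v_i^{(4)}(0),\, v_i^{(5)}(0)$ and the positive quantity $v_1^{(1)}(0) - v_2^{(1)}(0)$. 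Combined with the sign identities of Lemma~\ref{lm:signproperty} (applicable since $v_i^{(1)}(0) > 0$ forces $\mathcal{E}_1(v_i)(0) > 0$), this produces the sign information
\begin{equation*}
\bigl(\Phi_2(v_1) - \Phi_2(v_2)\bigr)(0) \geqslant 0 \quad \text{and} \quad \bigl(\Phi_2(v_1) - \Phi_2(v_2)\bigr)^{(1)}(0) \geqslant 0
\end{equation*}
needed to start the chain.

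Once those sign conditions are in hand, the successive $L_{\lambda_3}$, $L_{\lambda_2}$, $L_{\lambda_1}$ comparison argument of Lemma~\ref{lm:comparisonprinciple} propagates positivity down to $(v_1-v_2)^{(2)} \geqslant \lambda_1 (v_1-v_2) > 0$ on $(0,\tau)$, which extends to $(0,\infty)$ and makes $w^{(1)}$ both positive and strictly increasing on that half-line. This is incompatible with the boundedness of $w$ guaranteed by the hypothesis (and by Lemma~\ref{lm:boundededness}). Hence $v_1 \equiv v_2$, which, combined with $v_1^{(1)}(0) > v_2^{(1)}(0)$, yields the desired contradiction and proves the lemma.

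The principal obstacle is the middle step: turning the single scalar inequality $\mathcal{H}(v_1) \leqslant \mathcal{H}(v_2)$ into two separate directional conditions on $\Phi_2(v_1)-\Phi_2(v_2)$ and its first derivative at the origin, in just the orientation consumed by the subsequent comparison chain. The key leverage comes from the strict positivity $v_1^{(1)}(0) > v_2^{(1)}(0) > 0$, which (via Lemma~\ref{lm:signproperty}) fixes the sign of $\mathcal{E}_1(v_i)(0)$ and breaks the symmetry in the coupling between the unknown fourth- and fifth-order initial values.
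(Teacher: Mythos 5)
Your plan is in the right spirit (use the energy hypothesis to supply the sign conditions that the comparison chain of Lemma~\ref{lm:comparisonprinciple} needs), but the two steps that carry all the weight are missing, and the second one fails as stated. The passage from the single scalar inequality $\mathcal{H}(v_1)\leqslant\mathcal{H}(v_2)$, evaluated at $t=0$, to the two separate conditions $(\Phi_2(v_1)-\Phi_2(v_2))(0)\geqslant0$ and $(\Phi_2(v_1)-\Phi_2(v_2))^{(1)}(0)\geqslant0$ is asserted, not proved. Using \eqref{energyfactorized} and \eqref{intitialconditionsordering}, the energy difference at $t=0$ reads
\begin{equation*}
\bigl[\mathcal{E}_2(v_1)(0)-\mathcal{E}_2(v_2)(0)\bigr]v_1^{(2)}(0)+\mathcal{E}_1(v_1)(0)\,v_1^{(1)}(0)-\mathcal{E}_1(v_2)(0)\,v_2^{(1)}(0)\leqslant0,
\end{equation*}
a single inequality in the two unknown differences $v_1^{(4)}(0)-v_2^{(4)}(0)$ and $v_1^{(5)}(0)-v_2^{(5)}(0)$; knowing from Lemma~\ref{lm:signproperty} only that $\mathcal{E}_1(v_i)(0)>0$ does not pin down the sign of either difference, since one can always be traded against the other. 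The mechanism the paper actually uses is different: Corollary~\ref{cor:energyidentity} (i.e.\ $\mathcal{R}(v)=\mathcal{E}_1(v)v^{(1)}+\mathcal{E}_2(v)v^{(2)}$ with each summand nonnegative for \emph{each} solution) gives $-\mathcal{R}(v_1(0))\leqslant-\mathcal{R}(v_2(0))\leqslant0$, which is then split into separate comparisons of $\mathcal{E}_1$ and $\mathcal{E}_2$ at the origin (\eqref{ordering9}--\eqref{ordering11}), hence of $v^{(4)}(0)$ and $v^{(5)}(0)$; these, together with the four lower-order comparisons, are fed into the six-condition Lemma~\ref{lm:weakcomparisonprinciple} rather than into the two-condition chain you run.

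The second gap is fatal to the chain as you describe it: you yourself compute $(\Phi_1(v_1)-\Phi_1(v_2))(0)=0$ and $(\Phi_1(v_1)-\Phi_1(v_2))^{(1)}(0)=-\lambda_1\bigl(v_1^{(1)}(0)-v_2^{(1)}(0)\bigr)<0$, and nothing in your argument repairs this. Even granting the $\Phi_2$-level signs, the descent step only provides the source inequality $-L_{\lambda_2}\bigl(\Phi_1(v_1)-\Phi_1(v_2)\bigr)\geqslant0$ on $(0,\tau)$; with zero initial value and strictly negative initial slope, $\Phi_1(v_1)-\Phi_1(v_2)$ becomes negative immediately to the right of $0$, i.e.\ $(v_1-v_2)^{(2)}<\lambda_1(v_1-v_2)$ there, which is the opposite of the inequality you claim to propagate to $(0,\infty)$, so the contradiction with boundedness is never reached. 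The energy hypothesis, used only "at the higher step", cannot reverse the sign at the $\Phi_1$ level: it influences the forcing term, not the initial slope. To close the argument one needs ordered data for all three second-order factors \emph{and} their first derivatives at $t=0$ (the hypotheses of Lemma~\ref{lm:weakcomparisonprinciple}), or a genuinely new argument replacing the failed $\Phi_1$-level comparison — and indeed the delicate point in the paper's own proof is precisely the claim \eqref{ordering5} about the $L_{\lambda_1}$-level derivative, which your proposal neither reproduces nor repairs.
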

	
	\begin{proof}
		Initially, using \eqref{intitialconditionsordering}, it is not hard to check that
		\begin{equation}\label{ordering5}
			L_{\lambda_1}(v_1(0))\geqslant L_{\lambda_1}(v_2(0)) \quad {\rm and} \quad L_{\lambda_1}^{(1)}(v_1(0))\geqslant L_{\lambda_1}^{(1)}(v_2(0)).
		\end{equation}
		
		Next, from \eqref{ordering8} and \eqref{signfunction2} at $t=0$, we have
		\begin{equation*}
			-\mathcal{R}(v_1(0))={-\mathcal{H}(v_1(0))+G(v_1(0))+\frac{1}{2}{v_1^{(3)}}(0)^2}
		\end{equation*}
		and
		\begin{equation*}
			-\mathcal{R}(v_2(0))=-\mathcal{H}(v_2(0))+G(v_2(0))+\frac{1}{2}{v_2^{(3)}(0)}^2.
		\end{equation*}
		Also, using \eqref{intitialconditionsordering}, it follows 
		\begin{equation*}
			-\mathcal{H}(v_1(0))+G(v_1(0))+\frac{1}{2}{v_1^{(3)}(0)}^2\leqslant-\mathcal{H}(v_2(0))+G(v_2(0))+\frac{1}{2}{v_2^{(3)}(0)}^2\leqslant0.
		\end{equation*}
		This combined with  Corollary~\ref{cor:energyidentity} gives us
		\begin{equation}\label{ordering9}
			-\mathcal{R}(v_1(0)) \leqslant-\mathcal{R}(v_2(0))\leqslant 0.
		\end{equation}
		From this, we obtain two identities as follows
		\begin{equation*}
			0\leqslant\left[\mathcal{E}_1(v_2)v^{(1)}_2-\mathcal{E}_1(v_1)v_1^{(1)}\right](0)\leqslant\left[\mathcal{E}_1(v_2(0))-\mathcal{E}_1(v_1(0))\right]v_2^{(1)}(0)
		\end{equation*}
		and
		\begin{equation*}
			0\leqslant\left[\mathcal{E}_2(v_2)v_2^{(2)}-\mathcal{E}_2(v_1)v_1^{(2)}\right](0)\leqslant\left[\mathcal{E}_2(v_2(0))-\mathcal{E}_2(v_1(0))\right]v_2^{(2)}(0),
		\end{equation*}
		which yields
		\begin{equation}\label{ordering11}
			\mathcal{E}_1(v_2(0))\geqslant\mathcal{E}_1(v_1(0)) \quad {\rm and} \quad  \mathcal{E}_2(v_2(0))\geqslant\mathcal{E}_2(v_1(0)).
		\end{equation}
		
		Now, by combining \eqref{ordering11} and \eqref{intitialconditionsordering}, we get respectively  
		\begin{equation*}
			v_1^{(5)}(0)\geqslant v_2^{(5)}(0) \quad {\rm and} \quad v_1^{(4)}(0)\leqslant v_2^{(4)}(0).
		\end{equation*}
		Therefore, we conclude
		\begin{equation}\label{ordering6}
			(L_{\lambda_1}\circ L_{\lambda_2})(v_1(0))\geqslant (L_{\lambda_1}\circ L_{\lambda_2})(v_2(0)).
		\end{equation}
		and
		\begin{equation}\label{ordering10}
			(L_{\lambda_1}\circ L_{\lambda_2})^{(1)}(v_1(0))\geqslant (L_{\lambda_1}\circ L_{\lambda_2})^{(1)}(v_2(0)).
		\end{equation}
		Finally, using \eqref{intitialconditionsordering}, \eqref{ordering5}, \eqref{ordering6} and \eqref{ordering10}, we can apply Lemma~\ref{lm:weakcomparisonprinciple} to finish the proof.
	\end{proof}
	
	In what follows, let $[t_1, t_2]\subset\mathbb R$ denote intervals where the functions $v^{(1)},v^{(2)}$ are strictly monotone on $(t_1,t_2)$.
	In this fashion, denoting the inverse of $v(t)$ by $t(v)$, we define the change of variables
	\begin{equation}\label{changeofvariablesvdb}
		s=v \quad \text{ and } \quad \zeta(s)=\left[v^{(1)}(t(s))\right]^{2} .
	\end{equation}
	Now for $s \in[s_1, s_2]=[v(t_{1}), v(t_{2})]$, we have
	\begin{equation*}
		\zeta^{(1)}(s)=2 v^{(2)}(t(s)) \quad {\rm and} \quad \zeta^{(2)}(s)=2 v^{(3)}(t(s)).
	\end{equation*}
	If $t_{1}=-\infty$, then we write $\zeta^{(1)}(s_{1})=\lim_{s \rightarrow s_1}\zeta^{(1)}(t)$ and $\zeta^{(2)}(s_{1})=\lim_{s \rightarrow s_1}\zeta^{(2)}(t)$.
	In the same way, if $t_{2}=+\infty$, then we write $\zeta^{(1)}(s_{2})=\lim_{s \rightarrow s_2}\zeta^{(1)}(t)$ and $\zeta^{(2)}(s_{2})=\lim_{s \rightarrow s_2}\zeta^{(2)}(t)$.
	Notice that these two limits exist as a consequence of Lemma~\ref{lm:energylimits}.
	
	We are ready prove the energy ordering lemma, which is another crucial result in this manuscript.
	
	\begin{lemma}\label{lm:energyordering}
		Let $v_1, v_2 \in C^{6}(\mathbb{R})$ be bounded solutions to \eqref{ourODE}.
		Suppose that $v_1(0)=v_2(0)$ and either $v_1^{(1)}(0)>v_2^{(1)}(0) \geqslant 0$ or $v_1^{(1)}(0)<v_2^{(1)}(0) \leqslant 0$, then $\mathcal{H}(v_1)>\mathcal{H}(v_2)$.
	\end{lemma}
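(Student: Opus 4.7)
The plan is to argue by contradiction. Assume the conclusion fails, so $\mathcal{H}(v_1) \leqslant \mathcal{H}(v_2)$. The reflection symmetry $v(\cdot) \mapsto v(-\cdot)$ of \eqref{ourODE} reduces the case $v_1^{(1)}(0) < v_2^{(1)}(0) \leqslant 0$ to the other one, so I focus on $v_1^{(1)}(0) > v_2^{(1)}(0) \geqslant 0$ throughout.

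The natural target is Lemma~\ref{lm:ordering-comparison}, but that result additionally requires matching of $v^{(2)}(0)$ and $v^{(3)}(0)$, which is not part of the present hypotheses. To bridge this gap, I would introduce an auxiliary bounded solution $\tilde{v} \in C^{6}(\mathbb{R})$ of \eqref{ourODE} with prescribed initial data $\tilde{v}(0) = v_2(0)$, $\tilde{v}^{(1)}(0) = v_1^{(1)}(0)$, $\tilde{v}^{(2)}(0) = v_2^{(2)}(0)$, $\tilde{v}^{(3)}(0) = v_2^{(3)}(0)$, and with the two remaining data $(\tilde{v}^{(4)}(0), \tilde{v}^{(5)}(0))$ chosen so that $\mathcal{H}(\tilde{v}) = \mathcal{H}(v_1)$. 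Inspecting \eqref{hamiltonianenergy} at $t = 0$, the energy constraint is a single polynomial equation in $(\tilde{v}^{(4)}(0), \tilde{v}^{(5)}(0))$, leaving one free parameter that can be tuned. With such $\tilde{v}$ in hand, Lemma~\ref{lm:ordering-comparison} applies to the pair $(\tilde{v}, v_2)$: the matching conditions on $v(0), v^{(2)}(0), v^{(3)}(0)$ hold, $\tilde{v}^{(1)}(0) > v_2^{(1)}(0) > 0$, and $\mathcal{H}(\tilde{v}) = \mathcal{H}(v_1) \leqslant \mathcal{H}(v_2)$. The conclusion $\tilde{v} \equiv v_2$ then forces $v_1^{(1)}(0) = \tilde{v}^{(1)}(0) = v_2^{(1)}(0)$, contradicting the hypothesis.

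The delicate step is ensuring that the constructed $\tilde{v}$ is a global bounded solution, so that Lemma~\ref{lm:ordering-comparison} is indeed available. Generic Cauchy data for a sixth-order autonomous system need not yield solutions defined on all of $\mathbb{R}$, and only globally defined solutions are automatically bounded by Lemma~\ref{lm:boundededness}. Securing global existence for the chosen free parameter requires a continuity/shooting argument in the spirit of the two-parameter shooting outlined in the introduction, or the stronger machinery of Proposition~\ref{prop:odeclassification} (ii). An alternative route that bypasses the global-existence issue is to work in the reduced $(s, \zeta)$-variables introduced just before the lemma: the identity of Corollary~\ref{cor:energyidentity}, rewritten in those coordinates, relates $\zeta^{(1)}, \zeta^{(2)}$ to $\mathcal{H}(v)$ and $G(s)$, and combined with the sign properties of Lemma~\ref{lm:signproperty} one can try to propagate the strict inequality $\zeta_1(s_0) > \zeta_2(s_0)$ at $s_0 = v_1(0) = v_2(0)$ along the common monotonicity arc via a Sturm-type comparison, ultimately contradicting $\mathcal{H}(v_1) \leqslant \mathcal{H}(v_2)$ at a subsequent extremum. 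Either route, the heart of the argument is to convert the one-dimensional information $v_1^{(1)}(0) > v_2^{(1)}(0)$ into a statement about the full six-dimensional phase space, and it is this conversion that constitutes the main obstacle.
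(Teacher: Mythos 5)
Your first (primary) route has a genuine gap that cannot be repaired as stated. The whole argument hinges on producing a \emph{bounded, globally defined} solution $\tilde v$ of \eqref{ourODE} with four prescribed initial data $\tilde v(0)=v_2(0)$, $\tilde v^{(1)}(0)=v_1^{(1)}(0)$, $\tilde v^{(2)}(0)=v_2^{(2)}(0)$, $\tilde v^{(3)}(0)=v_2^{(3)}(0)$ and prescribed energy $\mathcal H(\tilde v)=\mathcal H(v_1)$. But bounded solutions to \eqref{ourODE} are extremely rigid: by Lemma~\ref{lm:comparisonprinciple} they are uniquely determined by the two values $v(0),v^{(1)}(0)$ alone, so prescribing four Cauchy data plus an energy level is badly overdetermined. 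Worse, under your contradiction hypothesis $\mathcal H(v_1)\leqslant\mathcal H(v_2)$ the very lemma you want to invoke, Lemma~\ref{lm:ordering-comparison}, applied to the pair $(\tilde v,v_2)$ shows that such a bounded $\tilde v$ \emph{cannot exist} (it would force $\tilde v\equiv v_2$, contradicting $\tilde v^{(1)}(0)>v_2^{(1)}(0)$). So your construction can never be completed, and its failure yields no contradiction with the assumption on $v_1$, because $v_1$ entered only through its energy value; the obstruction is the rigidity of bounded solutions, not the assumed energy inequality. The suggested rescue via Proposition~\ref{prop:odeclassification}~(ii) is moreover circular: the uniqueness statement in that proposition is itself proved by citing Lemma~\ref{lm:energyordering}. (A smaller issue: Lemma~\ref{lm:ordering-comparison} as stated needs $v_2^{(1)}(0)>0$ strictly, while the present hypotheses allow $v_2^{(1)}(0)=0$.)

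Your alternative sketch in the $(s,\zeta)$--variables is in fact the paper's actual strategy, but as written it is only a gesture. The paper argues by contradiction and, instead of comparing energies ``at a subsequent extremum'', uses the change of variables \eqref{changeofvariablesvdb} on the maximal arcs where $v_1^{(1)},v_2^{(1)}$ are positive, proves two endpoint inequalities $|\boldsymbol{\zeta}_1(s_1)|^2<|\boldsymbol{\zeta}_2(s_1)|^2$ and $|\boldsymbol{\zeta}_1(s_2)|^2>|\boldsymbol{\zeta}_2(s_2)|^2$ via Corollary~\ref{cor:energyidentity} and Lemma~\ref{lm:signproperty}, and then uses the intermediate value theorem to find comparable times $t_1,t_2$ at which $v_1^{(2)}(t_1)=v_2^{(2)}(t_2)$ and $v_1^{(3)}(t_1)=v_2^{(3)}(t_2)$ while $v_1(t_1)=v_2(t_2)$ and $v_1^{(1)}(t_1)>v_2^{(1)}(t_2)\geqslant0$; only then does Lemma~\ref{lm:ordering-comparison} apply and give $v_1\equiv v_2$, the desired contradiction. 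It also needs a separate preliminary step (Step~1 in the paper) handling arcs that extend to $t=\pm\infty$, showing that two distinct bounded solutions cannot approach the same equilibrium monotonically from the same side. None of these substantive ingredients — the endpoint energy comparisons, the intermediate-value matching of second and third derivatives, the treatment of infinite endpoints — appears in your sketch, so as it stands the proposal does not constitute a proof.
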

	
	\begin{proof}
		We divide the proof into three steps as follows.
		
		The first step states that there are at most two bounded solutions on the unstable manifold of each equilibrium point.
		
		\noindent{\bf Step 1.} Suppose that $v_{1}, v_{2},v^{(1)}_{1}, v^{(1)}_{2}$ are non-constant functions, and that there exists $\ell_0 \in \mathcal{A}(v_1)\cap\mathcal{A}(v_2)$ such that 
		\begin{equation*}
			\lim_{t \rightarrow\pm\infty} v_{1}(t)=\lim_{t \rightarrow\pm\infty} v_{2}(t)=\ell_0 \quad {\rm and} \quad \lim_{t \rightarrow\pm\infty} v^{(1)}_{1}(t)=\lim_{t \rightarrow\pm\infty} v^{(1)}_{2}(t)=0.
		\end{equation*}
		Then, it follows that $v_{1}(t)$ decreases to $\ell_0$ and $v_{2}(t)$ increases to $\ell_0$ as $t\rightarrow\pm\infty$, or vice versa.
		In addition, $v^{(1)}_{1}(t)$ increases to $0$ and $v^{(1)}_{2}(t)$ decreases to $0$ as $t\rightarrow\pm\infty$, or vice versa.
		
		Initially, by Corollary~\ref{cor:symmetry}, $v_{1}, v_{2}$ and $v^{(1)}_{1}, v^{(1)}_{2}$ can only tend monotonically to their respective limits. 
		Also, using symmetry, we may suppose by contradiction that $v_{1}$ and $v_{2}$ both decrease towards $\ell_0$ as $t\rightarrow-\infty$ and $v^{(1)}_{1}$ and $v^{(1)}_{2}$ both increase towards $0$ as $t\rightarrow-\infty$.
		In other words, there exists $T\gg1$ such that $v_{1}(t)>0$, $v_{2}(t)>0$, $v_{1}^{(1)}(t)<0$, and $v_{2}^{(1)}(t)<0$ for $t \in(-\infty, T)$.
		
		We finish the contradiction argument with the following claim:
		
		\noindent{\bf Claim 1.} $v_{1} \equiv v_{2}$. 
		
		\noindent Indeed, for $s \in(\ell_0, \ell_0+\varepsilon_{0})$, where $0<\varepsilon_{0}\ll1$ is small enough, let $\zeta_{1}$ and $\zeta_{2}$ correspond to $v_{1}$ and $v_{2}$ respectively by the change of variables defined in \eqref{changeofvariablesvdb}. 
		Notice that one must have $\zeta^{(1)}_{1} \neq \zeta^{(1)}_{2}$ and $\zeta^{(2)}_{1} \neq \zeta^{(2)}_{2}$ on $(\ell_0, \ell_0+\varepsilon_{0})$ .
		Otherwise, by Lemma~\ref{lm:comparisonprinciple}, we would get $v_{1} \equiv v_{2}$. 
		Without loss of generality, we may assume $\zeta^{(1)}_{1}>\zeta^{(1)}_{2}$ and $\zeta^{(2)}_{1}>\zeta^{(2)}_{2}$ on $(\ell_0, \ell_0+\varepsilon_{0})$. 
		Since both $\zeta_{1},\zeta^{(1)}_{1}$ and $\zeta_{2},\zeta^{(1)}_{2}$ are differentiable on $(\ell_0, \ell_0+\varepsilon_{0})$ and, by Lemma~\ref{lm:energylimits},
		$\zeta_{1}(\ell_0)=\zeta_{2}(\ell_0)=0$ and $\zeta_{1}^{(1)}(\ell_0)=\zeta_{2}^{(1)}(\ell_0)=0$.
		Therefore, there exists $s_{0} \in(\ell_0, \ell_0+\varepsilon_{0})$ such that $\zeta^{(1)}_{1}(s_{0}) \geqslant \zeta^{(1)}_{2}(s_{0})$ and $\zeta^{(2)}_{1}(s_{0})\geqslant \zeta^{(2)}_{2}(s_{0})$.
		
		We divide the analysis into two cases:
		
		\noindent{\bf Case 1.} Both $\zeta_{2}^{(1)}(s_0) \geqslant 0$ and $\zeta_{2}^{(2)}(s_0) \geqslant 0$.
		
		\noindent There exist $t_{1},t_{2}\in\mathbb{R}$ such that
		\begin{align*}
			v_{1}(t_1)=v_{2}(t_2)&=s_0, \quad v_{1}^{(1)}(t_1)>v_{2}^{(1)}(t_2) > 0, \quad v_{1}^{(2)}(t_1)\geqslant v_{2}^{(2)}(t_2), \quad \text {and} \quad v_{1}^{(3)}(t_1)\geqslant v_{2}^{(3)}(t_2).
		\end{align*}
		By translating, $v_{1}$ and $v_{2}$ by $t_{1}$ and $t_{2}$ respectively, we find
		\begin{align}\label{ordering1}
			v_{1}(0)=&v_{2}(0), \quad v_{1}^{(1)}(0)>v_{2}^{(1)}(0) > 0, \quad v_{1}^{(2)}(0)\geqslant v_{2}^{(2)}(0), \quad \text {and} \quad v_{1}^{(3)}(0)\geqslant v_{2}^{(3)}(0).
		\end{align}
		Since both $v_{1}$ and $v_{2}$ tend to $\ell_0$ monotonically as $t\rightarrow-\infty$, we conclude from Lemma~\ref{lm:energylimits} that       $\mathcal{A}^-(v_1)=\mathcal{A}^-(v_2)=\ell_0$ and $\mathcal{A}^-(v^{(j)}_1)=\mathcal{A}^-(v^{(j)}_2)=0$ for $j=1,2,3,4,5$. 
		Therefore, $\mathcal{H}(v_{1})=\mathcal{H}(v_{2})$. 
		In the light of \eqref{signidentityauxiliary}, it is straightforward to check that \eqref{ordering1} instead of \eqref{intitialconditionsordering} is sufficient for the proof in Lemma~\ref{lm:ordering-comparison} to go on unchanged. 
		Hence $v_{1} \equiv v_{2}$. 
		This finishes the proof of the first case.
		
		\noindent{\bf Case 2.} Either $\zeta_{2}^{(1)}(s_0) < 0$ or $\zeta_{2}^{(2)}(s_0) < 0$.
		
		\noindent Without loss of generality, we assume $\zeta_{2}^{(1)}(s_0) < 0$; the same idea applies in the other situation.
		Notice that $\zeta^{(1)}_{2}>0$ on $(\ell_0, s_{0}]$ and $\zeta^{(1)}_{2}(\ell_0)=0$.
		Then, there exists a $\bar{s}_{0} \in(\ell_0, s_{0})$ such that $\zeta_{2}^{(1)}(\bar{s}_{0})=0$. 
		If $\zeta_{1}^{(1)}(\bar{s}_{0}) \geqslant 0$, then $\zeta_{1}^{(1)}(\bar{s}_{0}) \geqslant \zeta_{2}^{(1)}(\bar{s}_{0})=0$, which is covered by the last case. 
		If $\zeta_{1}^{(1)}(\bar{s}_{0})<0$, then $ \zeta_{1}^{(1)}(\bar{s}_{0})<\zeta_{2}^{(1)}(\bar{s}_{0})$ and $\zeta_{1}^{(1)}(s_0) \geqslant \zeta_{2}^{(1)}(s_{0})$, and by continuity there exists $\tilde{s}_{0} \in(\bar{s}_{0}, s_{0}]$ such that $\zeta_{1}^{(1)}(\tilde{s}_{0})=\zeta_{2}^{(1)}(\tilde{s}_{0})$. 
		Hence, one can find $t_{1},t_2\in\mathbb{R}$ such that
		\begin{align*}
			v_{1}(t_1)=v_{2}(&t_2)=\tilde{s}_{0}, \quad v_{1}^{(1)}(t_1)>v_{2}^{(1)}(t_2) > 0, \quad v_{1}^{(2)}(t_1)= v_{2}^{(2)}(t_2), \quad \text {and} \quad v_{1}^{(3)}(t_1)\geqslant v_{2}^{(3)}(t_2).
		\end{align*}
		Since $\mathcal{H}(v_1)=\mathcal{H}(v_{2})$ as above, we may apply Lemma~\ref{lm:ordering-comparison} to obtain $v_{1} \equiv v_{2}$, which finishes the proof of the second case.
		
		The proof of Step 1 is concluded.
		
		The next step states that whenever the energy inequality is violated one can find points on which the second and the third derivatives  of these two bounded solutions coincide. 
		
		\noindent{\bf Step 2.} Suppose that $v_{1}(0)=v_{2}(0)$ and $v_{1}^{(1)}(0)>v_{2}^{(1)}(0) \geqslant 0$, and $\mathcal{H}(v_1) \leqslant \mathcal{H}(v_2)$. 
		Then, there exist $t_{1},t_{2}\in\mathbb{R}$ such that 
		\begin{align}\label{ordering0}
			v_{1}(t_{1})=&v_{2}(t_{2}), \quad v_{1}^{(1)}(t_{1})>v_{2}^{(1)}(t_{2}) \geqslant 0, \quad v_{1}^{(2)}(t_{1})=v_{2}^{(2)}(t_{2}), \quad \text{and} \quad
			v_{1}^{(3)}(t_{1})= v_{2}^{(3)}(t_{2}).
		\end{align}
		
		Let $[\hat{t}_{1}, \hat{t}_{2}]$ be the largest interval containing $t=0$ on which $v_{1}^{(1)},v_{1}^{(2)},v_{1}^{(3)}$ are positive, and let $[t_{1}, t_{2}]$ be the largest interval containing $t=0$ on which $v_{2}^{(1)},v_{2}^{(2)},v_{2}^{(3)}$ are positive.
		We can now apply the change variables \eqref{changeofvariablesvdb} again. 
		Let $\zeta_{1}$ correspond to $v_{1}$ on $[\hat s_{1}, \hat s_{2}]=[v_{1}(\hat{t}_{1}), v_{1}(\hat{t}_{2})]$, and let $\zeta_{2}$ correspond to $v_{2}$ on $[s_{1}, s_{2}]=[v_{2}(t_{1}), v_{2}(t_{2})]$. 
		Clearly, as a consequence of Lemma~\ref{lm:comparisonprinciple}, we have $\zeta_{1}>\zeta_{2}$ on $(s_{1}, s_{2})$.
		Finally, if $-\infty<t_{1}$, then it follows from Lemma~\ref{lm:energylimits} that $\hat{s}_{1}<s_{1}$, whereas if $t_1=-\infty$, then this follows from Step 1. 
		The same happens to $\hat{s}_{2}>s_{2}$.
		
		We fix the notation $\boldsymbol{\zeta}=(\zeta^{(1)},\zeta^{(2)})$ and $|\boldsymbol{\zeta}|^2={\zeta^{(1)}}^2+{\zeta^{(2)}}^2$.
		We will prove that the norm function has opposite sign on the edges of $[s_1,s_2]$.
		
		\noindent{\bf Claim 2.} $|\boldsymbol{\zeta}_{1}(s_1)|^2<|\boldsymbol{\zeta}_{2}(s_{1})|^2$.
		
		\noindent One can see that $\zeta_{2}(s_1)=v_{2}^{(1)}(t_1)=0$, $\zeta_{2}^{(1)}(s_1)=2 v_{2}^{(2)}(t_1) = 0$ and $\zeta_{2}^{(2)}(s_1)=2 v_{2}^{(3)}(t_1) \geqslant 0$.
		Indeed, if $t_1=-\infty$, this follows from Lemma~\ref{lm:energylimits}, whereas if $-\infty<t_1$, then it holds because $v(t_1)$ and $v^{(1)}(t_1)$ are minimums. 
		Notice that $v_{1}(\hat{t}_{1})=\hat{s}_{1}<s_1$ and $v_{1}(\hat{t}_{2})=\hat{s}_{2}>s_2>s_1$, thus let $t_*\in(\hat{t}_{1}, \hat{t}_{2})$ be such that $v_{1}(t_*)=v_{2}(t_1)=s_{1}$. 
		By \eqref{energyfactorized} and Lemma~\ref{lm:signproperty}, we obtain
		\begin{align*}
			\frac{|\boldsymbol{\zeta}_{2}(s_1)|^2-|\boldsymbol{\zeta}_{1}(s_{1})|^2}{8}
			&=\left(\frac{1}{2}v_{2}^{(2)}(t_1)^{2}-\frac{1}{2}v_{1}^{(2)}(t_*)^{2}\right)+\left(\frac{1}{2}v_{2}^{(3)}(t_1)^{2}-\frac{1}{2}v_{1}^{(3)}(t_*)^{2}\right) \\
			&\geqslant\left[\mathcal{H}(v_2)-G(s_1)\right]-\left[\mathcal{H}(v_1)-G(s_1)-\mathcal{R}(v_1(t_*))\right] \\
			&=[\mathcal{H}(v_2)-\mathcal{H}(v_1)]+\mathcal{R}(v_1(t_*)).
		\end{align*}
		Next, using that  $v_{1}^{(2)}(t_*)=\sqrt{\zeta^{(1)}_{1}(s_1)}>\sqrt{\zeta^{(1)}_{2}(s_1)}=0$, combined with Corollary~\ref{cor:energyidentity}, we conclude
		$\mathcal{R}(v_1(t_*))>0$.
		Hence, since $\mathcal{H}(v_2)- \mathcal{H}(v_1)\geqslant0$, the desired conclusion holds.
		
		\noindent{\bf Claim 3.} $|\boldsymbol{\zeta}_{1}(s_2)|^2>|\boldsymbol{\zeta}_{2}(s_{2})|^2$. 
		
		\noindent Indeed, one can see that $\zeta_{2}(s_2)=0$ and $\zeta_{2}^{(1)}(s_2)=2 v_{2}^{(2)}(t_1) \leqslant 0$.
		Indeed, if $t_2=\infty$, this follows from Lemma~\ref{lm:energylimits}, whereas if $t_2<+\infty$, then it holds because $v(t_2)$ is a maximum. 
		Notice that $v_{2}({t}_{1})={s}_{1}>\hat{s}_1$ and $v_{2}({t}_{2})={s}_{2}<\hat{s}_2<\hat{s}_1$, thus let $t^*\in({t}_{1}, {t}_{2})$ be such that $v_{2}(t^*)=v_{1}(\hat{t}_2)=\hat{s}_{2}$. 
		By \eqref{energyfactorized} and Lemma~\ref{lm:signproperty}, we obtain
		\begin{align*}
			\frac{|\boldsymbol{\zeta}_{2}(s_2)|^2-|\boldsymbol{\zeta}_{1}(s_{2})|^2}{8}
			=[\mathcal{H}(v_2)-\mathcal{H}(v_1)]+\mathcal{R}(v_1(t_2)).
		\end{align*}
		As before, we conclude the proof of this claim.
		
		By continuity, we find $s_{*} \in(s_{1}, s_{2})$ such that $|\boldsymbol{\zeta}_{1}(s_*)|=|\boldsymbol{\zeta}_{2}(s_{*})|$, which implies  ${\zeta}^{(1)}_{1}(s_*)={\zeta}^{(1)}_{2}(s_{*})$ and ${\zeta}^{(2)}_{1}(s_*)={\zeta}^{(2)}_{2}(s_{*})$.
		The proof is finished by undoing the transformation \eqref{changeofvariablesvdb}
		
		Finally, we are based on the last step to conclude the proof of the energy ordering.
		
		\noindent{\bf Step 3.} $\mathcal{H}(v_1)>\mathcal{H}(v_2)$.
		
		By symmetry, we may assume $v_{1}^{(1)}(0)>v_{2}^{(1)}(0) \geqslant 0$.
		Suppose by contradiction that  $\mathcal{H}(v_1)\leqslant\mathcal{H}(v_2)$, then, by Step 2, there exist $t_{1},t_{2}\in\mathbb R$ such that \eqref{ordering0} holds.
		Thus, by translation invariance, we may take $t_{1}=t_{2}=0$, which gives us that \eqref{intitialconditionsordering} are satisfied.
		Therefore, using Lemma~\ref{lm:ordering-comparison} we obtain that $v_{1} \equiv v_{2}$, which contradicts our assumptions, and so Step 3 is proved.
		
		The proof of the lemma is finished.
	\end{proof}
	
	\subsection{Uniqueness}
	We prove uniqueness, up to translations, of homoclinic solutions to \eqref{ourODE}. 
	\begin{lemma}\label{lm:uniqueness}
		Let $v_1, v_2 \in C^{6}(\mathbb{R})$ be bounded positive solutions to \eqref{ourODE}.
		Suppose that $\lim_{|t| \rightarrow \infty} v_1(t)=\lim _{|t| \rightarrow \infty}v_2(t)=0$ and $v_1^{(1)}(0)=v_2^{(1)}(0)=0$.
		Then, it follows $v_1 \equiv v_2$.
	\end{lemma}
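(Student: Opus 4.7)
The plan is to reduce the uniqueness claim to the two--datum comparison Lemma~\ref{lm:comparisonprinciple}, which for bounded solutions demands only that $v(0)$ and $v^{(1)}(0)$ match. Since the hypothesis already gives $v_1^{(1)}(0)=v_2^{(1)}(0)=0$, my task reduces to showing $v_1(0)=v_2(0)$, and the engine for that will be the energy ordering Lemma~\ref{lm:energyordering} applied after a suitable translation.

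First, I would observe that $\mathcal{H}(v_1)=\mathcal{H}(v_2)=0$. Indeed, bootstrapping \eqref{ourODE} from the $L^\infty$ bound on $v_i$ yields a uniform bound on all derivatives $v_i^{(j)}$ on $\mathbb R$; then a Landau--Kolmogorov type inequality on intervals $[t,t+1]$, combined with $v_i(t)\to 0$, forces $v_i^{(j)}(t)\to 0$ for $j=1,\dots,5$ as $|t|\to\infty$. Passing to the limit in the constant value $\mathcal{H}(t,v_i)$ and using $G(0)=0$ then gives $\mathcal{H}(v_i)=0$. Second, I would establish that each $v_i$ attains its global maximum at $0$ and is strictly decreasing on $(0,\infty)$. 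Corollary~\ref{cor:symmetry} first shows $v_i$ is even. If its supremum were attained at some $t^*\neq 0$, applying Corollary~\ref{cor:symmetry} at $t^*$ and combining with evenness would yield $v_i(t+2t^*)=v_i(t)$, forcing periodicity in contradiction with $v_i\to 0$. The same rigidity rules out any further critical point on $(0,\infty)$, so $v_i^{(1)}<0$ strictly there.

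Given these preparations, I would conclude by contradiction. If $v_1\not\equiv v_2$, Lemma~\ref{lm:comparisonprinciple} forces $v_1(0)\neq v_2(0)$, say $v_1(0)>v_2(0)$ without loss of generality. By the strict monotonicity of $v_1$ on $(0,\infty)$ and the intermediate value theorem I can pick $\tau>0$ with $v_1(\tau)=v_2(0)$ and $v_1^{(1)}(\tau)<0$. The translate $\tilde v_1(t):=v_1(t+\tau)$ then satisfies $\tilde v_1(0)=v_2(0)$ and $\tilde v_1^{(1)}(0)<0=v_2^{(1)}(0)$, which is exactly the second case of Lemma~\ref{lm:energyordering}; hence $\mathcal{H}(\tilde v_1)>\mathcal{H}(v_2)$. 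But by the translation invariance of \eqref{ourODE} and the first step, $\mathcal{H}(\tilde v_1)=\mathcal{H}(v_1)=0=\mathcal{H}(v_2)$, a contradiction. The hard part will be the rigidity argument of the second paragraph: the interplay of the symmetry from Corollary~\ref{cor:symmetry} with the decay at infinity is precisely what prevents any secondary critical points of the homoclinic and permits a well--defined shift into the energy ordering inequality.
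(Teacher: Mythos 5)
Your argument is correct, and its endgame differs genuinely from the paper's. The preliminary steps coincide in substance: like the paper, you use Corollary~\ref{cor:symmetry} to show that $t=0$ is the only critical point (the paper phrases this as Claim~1, ``another zero of $v_i^{(1)}$ forces periodicity''), and you identify $\mathcal H(v_1)=\mathcal H(v_2)=0$. For that energy identification the paper simply invokes Lemma~\ref{lm:energylimits} once eventual monotonicity is known, whereas you argue via uniform derivative bounds and interpolation; this works (the bounds follow from standard interior estimates for the constant-coefficient sixth order operator, or from writing $v=G*(-f(v))$ with the exponentially decaying Green's function), but since your monotonicity step does not use it, you could reorder and quote Lemma~\ref{lm:energylimits} directly, avoiding the asserted bootstrap altogether. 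The real divergence is in Claim~2: the paper first rules out a transversal crossing of $v_1$ and $v_2$ (using Lemma~\ref{lm:comparisonprinciple} and Lemma~\ref{lm:energyordering} at the crossing point), concludes $v_1>v_2$ everywhere, and then needs a separate integration-by-parts/Green-identity argument with vanishing boundary terms to exclude the strictly ordered configuration. You instead exploit translation invariance: slide $v_1$ by the unique $\tau>0$ with $v_1(\tau)=v_2(0)$, so that the translate matches $v_2$ at $t=0$ with strictly negative versus zero first derivative, and apply the second alternative of Lemma~\ref{lm:energyordering} (its hypothesis $v_2^{(1)}(0)\leqslant 0$ is met with equality) to get $\mathcal H(\tilde v_1)>\mathcal H(v_2)$, contradicting $\mathcal H(\tilde v_1)=\mathcal H(v_1)=0=\mathcal H(v_2)$. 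There is no circularity, since Lemma~\ref{lm:energyordering} is proved before and independently of this uniqueness lemma. What your route buys is the elimination of the paper's final integral identity (whose monotonicity-in-$T$ claim for the term involving $g$ is the least transparent part of the published proof); what the paper's route buys is that it never needs the strict monotonicity of $v_1$ on all of $(0,\infty)$ to produce the comparison point, only the behavior at the first crossing.
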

	
	\begin{proof}
		We divide the proof into claims
		
		\noindent{\bf Claim 1.} $t_0=0$ is the only zero of $v_1^{(1)}$ and $v_2^{(1)}$. 
		
		\noindent Indeed, if $v_1^{(1)}$ had another zero at, say, $t_{1}>0$, then by repeatedly applying Corollary~\ref{cor:symmetry}, we get that $v$ must be periodic of period $2t_{1}>0$.
		In particular $0<v(0)=v(2kt_{1})$ for all $k \in \mathbb{N}$, which contradicts the fact $v(t) \rightarrow 0$ as $t \rightarrow +\infty$. 
		The argument for $v_2^{(1)}$ is analogous. 
		Hence, we get
		\begin{equation}\label{uniqueness1}
			v_1^{(1)}(t)<0 \quad \text{and} \quad v_2^{(1)}(t)<0 \quad \text{for all} \quad t>0;
		\end{equation}
		this proves the first claim.
		
		Next, by Lemma~\ref{lm:energylimits} and Proposition~\ref{prop:conservation}, we find
		\begin{equation}\label{uniqueness2}
			\mathcal{H}_{v_1}=\lim _{t \rightarrow +\infty} \mathcal{H}_{v_1}(t)=G(0)=0 \quad \text{and} \quad \mathcal{H}_{v_2}=\lim _{t \rightarrow +\infty} \mathcal{H}_{v_2}(t)=G(0)=0.
		\end{equation}
		From which, we see that if $v_1(0)=v_2(0)$, then, by Lemma~\ref{lm:comparisonprinciple}, the proof is concluded.
		
		\noindent{\bf Claim 2.} $v_1(0)=v_2(0)$.
		
		\noindent As a matter of fact, let us suppose by contradiction that $v_1(0)>v_2(0)$. 
		We claim that this implies $v_1>v_2$ everywhere. 
		Indeed, otherwise there would exist $t_{1}>0$ such that $v_1>v_2$ on $[0, t_{1})$ and $v_1(t_{1})=v_2(t_{1})$.
		Then, by \eqref{uniqueness1}, we infer that $v_1^{(1)}(t_{1}) \leqslant v_2^{(1)}(t_{1})<0$.
		If $v_1^{(1)}(t_{1})=v^{(1)}_2(t_{0})$, 
		then Lemma~\ref{lm:comparisonprinciple} implies $v_1 \equiv v_2$, contradicting $v_1(0)>v_2(0)$.
		If $v_1^{(1)}(t_{1})<v_2^{(1)}(t_{1})<0$, then, by Lemma~\ref{lm:energyordering}, we obtain $\mathcal{H}_{v_1}>\mathcal{H}_{v_2}$, which contradicts \eqref{uniqueness2}.
		
		For any $R>0$, since $v_1,v_2\in C^{6}(\mathbb R)$ satisfy \eqref{ourODE}, using integration by parts, we get
		\begin{align*}
			0&=\int_{-T}^{T} v_2\left(v_1^{(6)}-K_4v_1^{(4)}+K_2v_1^{(2)}+g(v_1)\right) \\
			&=B(T)+\int_{-T}^{T} v_1\left(v_2^{(6)}-K_4v_2^{(4)}+K_2v_2^{(2)}+g(v_2)\right)+\int_{-T}^{T} v_2v_1\left(g(v_2)-g(v_1)\right) \\
			&=B(T)+\int_{-T}^{T} v_2 v_1\left(g(v_2)-g(v_1)\right),
		\end{align*}
		where the term $B(T)$ denotes all the boundary terms coming from the integrations by parts. 
		By Lemma~\ref{lm:energylimits}, we have $B(T) \rightarrow 0$ as $R \rightarrow +\infty$. 
		However, since $\int_{-T}^{T} v_2v_1(f(v_2)-f(v_1))$ is a negative and strictly decreasing function of $T$, we obtain a contradiction by choosing $T\gg1$ large enough.
		
		The lemma is proved.
	\end{proof}
	
	A direct consequence of this uniqueness result is that for the concrete values of $K_0,K_2,K_4$ given by \eqref{coefficients} and $p=\frac{2n}{n-6}$, one can compute the homoclinic solution explicitly. 
	This is precisely the only place in the proof where the closed form of these parameters enters.
	\begin{corollary}\label{cor:closedform}
		Let $v\in C^{6}(\mathbb{R})$ be a bounded solution to \eqref{ourODE}.
		If $\lim_{|t|\rightarrow\infty}v(t)=0$, then there exists $T \in \mathbb{R}$ such that
		\begin{equation*}
			v(t)=c_{n}(2\cosh(t-T))^{-\gamma_n}.
		\end{equation*}
	\end{corollary}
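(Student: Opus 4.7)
The plan is to reduce the problem to the homoclinic uniqueness result of Lemma~\ref{lm:uniqueness} and then exhibit one explicit orbit of the claimed form by Emden--Fowler transforming the spherical solution from Theorem~\ref{thm:chen-lin-ou}.

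First, I would upgrade the hypotheses on $v$. If $v\equiv 0$, there is nothing to prove. Otherwise, suppose $v$ changes sign. Then Corollary~\ref{cor:symmetry}(ii) applied at any zero shows that $v$ is antisymmetric about that zero, and by iterating this reflection one would produce a solution that is either identically zero or oscillating periodically, contradicting $v\to 0$ as $|t|\to\infty$. Hence, possibly after replacing $v$ by $-v$ (also a solution, since $g$ is odd), we may assume $v>0$ on $\mathbb{R}$. Boundedness together with $v\to 0$ at $\pm\infty$ implies that $v$ attains a global maximum at some $t_{0}\in\mathbb{R}$. Translating, we may take $t_{0}=0$, so that $v^{(1)}(0)=0$.

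Second, I would produce a concrete candidate by transforming the spherical solution $u_{0,\mu}\in C^{6}(\mathbb{R}^{n})$ from Theorem~\ref{thm:chen-lin-ou}. Define
\begin{equation*}
V_{\mu}(t):=e^{\gamma_{n}t}\,u_{0,\mu}(e^{t}), \quad t\in\mathbb{R}.
\end{equation*}
Because $u_{0,\mu}$ is a radial $C^{6}$ solution to \eqref{ourPDE}, the change of variables in Subsection~\ref{subsec:changeofvariables} guarantees that $V_{\mu}$ is a bounded, positive, $C^{6}$ solution of \eqref{ourODE} on $\mathbb{R}$. A direct algebraic manipulation based on the identity
\begin{equation*}
\frac{2\mu\, e^{t}}{\mu^{2}+e^{2t}}=\frac{1}{\cosh(t-\ln\mu)}
\end{equation*}
yields, up to the normalizing constant coming from Theorem~\ref{thm:chen-lin-ou}, exactly $V_{\mu}(t)=c_{n}(2\cosh(t-T))^{-\gamma_{n}}$ with $T=\ln\mu$. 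One then reads off that $V_{\mu}(t)\to 0$ as $|t|\to\infty$, that $V_{\mu}$ is positive with unique critical point at $t=T$, and that $V_{\mu}^{(1)}(T)=0$.

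Third, I would close by uniqueness. Specializing to $\mu=1$ (so $T=0$), both $v$ and $V_{1}$ are bounded positive solutions to \eqref{ourODE} tending to $0$ at $\pm\infty$ and satisfying $v^{(1)}(0)=V_{1}^{(1)}(0)=0$. Lemma~\ref{lm:uniqueness} then forces $v\equiv V_{1}$, and restoring the translation absorbed in the first step gives the stated closed form. The only mildly tedious step is the bookkeeping of the coefficient $c_{n}$, which amounts to comparing powers of $\mu$ and checking the factorization $(\mu^{2}+e^{2t})/e^{t}=2\mu\cosh(t-\ln\mu)$; no genuine obstacle appears there, since the entire conceptual content of the corollary is already encoded in the homoclinic uniqueness Lemma~\ref{lm:uniqueness}.
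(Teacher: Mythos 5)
Your proposal is essentially the paper's own argument: produce the explicit homoclinic profile, use boundedness and decay to locate a critical point, and conclude by the homoclinic uniqueness result, Lemma~\ref{lm:uniqueness}. The only real difference is how the explicit profile is certified: the paper simply asserts a ``straightforward calculation'' that the $\cosh$-profile solves \eqref{ourODE}, while you obtain it by pushing the spherical solution of Theorem~\ref{thm:chen-lin-ou} through the Emden--Fowler change of variables. That is a slightly more conceptual verification and it delivers the normalization for free, but be careful with the paper's sign conventions when you do the bookkeeping: since $\gamma_n=\frac{6-n}{2}<0$, the bounded, decaying profile is of the form $(\mathrm{const})\,(\cosh(t-T))^{\gamma_n}$ (this is the exponent used in the proof of Theorem~\ref{maintheorem}), and with the decaying spherical solution the bounded transform is $t\mapsto e^{-\gamma_n t}u_{0,\mu}(e^{t})=r^{\frac{n-6}{2}}u_{0,\mu}(r)$; as literally written, your $V_\mu(t)=e^{\gamma_n t}u_{0,\mu}(e^t)$ combined with the printed formulas does not tend to $0$ at $\pm\infty$, so the claim ``$V_\mu(t)\to0$'' needs the corrected conventions rather than the printed ones.

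The one step you should repair is the exclusion of sign-changing $v$. Antisymmetry about a single zero (Corollary~\ref{cor:symmetry}(ii)) is perfectly compatible with $v\to0$ at both ends, so ``iterating the reflection'' produces nothing if $v$ has only one zero, and your contradiction does not yet follow. The quick fix: if $v\not\equiv0$ changes sign, then since $v\to0$ at $\pm\infty$ it attains a strictly positive maximum and a strictly negative minimum at two distinct points; by Corollary~\ref{cor:symmetry}(i), $v$ is symmetric about both critical points, hence periodic, contradicting decay. With that repair the reduction to a positive $v$ with a critical point is sound, and this is in fact a point worth making explicit, since Lemma~\ref{lm:uniqueness} requires positive solutions and the paper's own proof silently assumes it (without some such reduction the conclusion can only hold up to sign).
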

	
	\begin{proof}
		A straightforward calculation shows that $v_0(t)=c_{n}(2 \cosh (t))^{-\gamma_n}$ solves \eqref{ourODE}. 
		Using the conditions on $v$, we have that $v$ has a global maximum at some $T \in \mathbb{R}$. 
		Since $v^{(1)}(T)=0$, we can apply Lemma~\ref{lm:uniqueness} to conclude that $v(\cdot+T)=v_0$, and this finishes the proof.
	\end{proof}
	
	\subsection{Classification}
	In what follows, we state the most important auxiliary result in the proof of our main proposition.
	\begin{lemma}\label{lm:classification}
		Let $v \in C^{6}(\mathbb{R})$ be a solution to \eqref{ourODE}.
		One of the following three alternatives holds:
		\begin{itemize}
			\item[{\rm (a)}] $v\equiv \pm a^{*}_n$ or $v \equiv 0$;
			\item[{\rm (b)}] $v(t)=\pm c_{n}(2\cosh (t-T))^{-\gamma_n}$ for some $T \in \mathbb{R}$;
			\item[{\rm (c)}] $v$ is periodic, has a unique local maximum and minimum per period, and is symmetric with respect to its local extrema.
		\end{itemize}
	\end{lemma}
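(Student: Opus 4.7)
The plan is to run a case analysis on the size of the critical set $Z^{(1)}(v) := \{t \in \mathbb R : v^{(1)}(t) = 0\}$, leveraging the symmetry, uniqueness, and energy tools already assembled. If $v$ is constant, the equation $g(v) = 0$ immediately forces $v \in \{0, \pm a^{*}_n\}$, giving alternative (a). If $v$ is non-constant and $|Z^{(1)}(v)| \geqslant 2$, I pick two consecutive zeros $t_1 < t_2$ of $v^{(1)}$. Corollary~\ref{cor:symmetry}(i) makes $v$ symmetric about each of $t_1$ and $t_2$, and composing these two reflections yields periodicity with period $T = 2(t_2 - t_1)$. Since $v^{(1)}$ has constant sign on $(t_1, t_2)$, $v$ admits a unique extremum there, and the successive reflections impose the advertised max–min alternation with symmetry about each extremum; this is alternative (c).

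The remaining case is $v$ non-constant with $|Z^{(1)}(v)| \leqslant 1$, in which $v$ is eventually monotone at both ends. By Lemma~\ref{lm:asymptotics} combined with Lemma~\ref{lm:energylimits}, the limits $\ell_\pm := \lim_{t \to \pm\infty} v(t) \in \{0, \pm a^{*}_n\}$ exist with every derivative $v^{(j)}(t) \to 0$. Energy conservation forces $\mathcal{H}(v) = G(\ell_+) = G(\ell_-)$, and a direct computation from \eqref{coefficients} and \eqref{constantsolutions} shows $G(0) = 0$ while $G(\pm a^{*}_n) = -\frac{3}{n}K_0 (a^{*}_n)^2 < 0$, with $G$ strictly increasing on $(a^{*}_n,\infty)$.

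If $Z^{(1)}(v) = \{t_0\}$, I translate so that $t_0 = 0$; Corollary~\ref{cor:symmetry}(i) then makes $v$ even, so $\ell_- = \ell_+$, and (replacing $v$ by $-v$ if needed, using that $g$ is odd) I may assume $v(0) > 0$. If $\ell_+ = a^{*}_n$, then $v(0) > a^{*}_n$ by monotone convergence, while all odd derivatives of $v$ vanish at $0$; evaluating the Hamiltonian at $t = 0$ together with Lemma~\ref{lm:signproperty} yields
\[
\mathcal{H}(v) \;=\; \mathcal{E}_2(v(0))\, v^{(2)}(0) + G(v(0)) \;\geqslant\; G(v(0)) \;>\; G(a^{*}_n) \;=\; \mathcal{H}(v),
\]
a contradiction. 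Hence $\ell_\pm = 0$, and Lemma~\ref{lm:uniqueness} together with Corollary~\ref{cor:closedform} delivers alternative (b). If instead $Z^{(1)}(v) = \emptyset$, then $v$ is strictly monotone and $\ell_- \neq \ell_+$, so the equality $G(\ell_-) = G(\ell_+)$ forces $\{\ell_-, \ell_+\} = \{-a^{*}_n, +a^{*}_n\}$. Then $v$ has a unique zero $\tau$ about which $v$ is anti-symmetric by Corollary~\ref{cor:symmetry}(ii); translating so $\tau = 0$, I get $v(0) = v^{(2)}(0) = v^{(4)}(0) = 0$ and $v^{(1)}(0) > 0$, and Lemma~\ref{lm:signproperty} then gives
\[
\mathcal{H}(v) \;=\; \tfrac{1}{2}\bigl(v^{(3)}(0)\bigr)^{2} + \mathcal{E}_1(v(0))\, v^{(1)}(0) \;\geqslant\; 0,
\]
contradicting $\mathcal{H}(v) = G(\pm a^{*}_n) < 0$. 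This branch is vacuous.

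The main obstacle I anticipate is the exclusion of the two suspect non-trivial orbits appearing in Case 2: a non-constant symmetric solution with limits $\pm a^{*}_n$, and a monotone heteroclinic from $-a^{*}_n$ to $+a^{*}_n$. Both are killed by the same mechanism: at the symmetry point (respectively the zero crossing) half of the derivatives of $v$ vanish, so the sixth-order Hamiltonian collapses to a sum of a square plus an $\mathcal{E}_j(v)\, v^{(j)}$ term (and, in the first case, $G(v)$ evaluated above the equilibrium), each term of which is nonnegative by Lemma~\ref{lm:signproperty}, in direct conflict with the negative asymptotic energy $G(\pm a^{*}_n)$.
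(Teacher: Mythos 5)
Your proposal follows essentially the same route as the paper: a case split on the cardinality of the critical set $Z^{(1)}(v)$, with periodicity from the double reflection of Corollary~\ref{cor:symmetry} when there are two critical points, and the homoclinic/constant alternatives isolated via Lemma~\ref{lm:asymptotics}, Lemma~\ref{lm:energylimits}, energy conservation, and the sign property of Lemma~\ref{lm:signproperty} (the paper invokes Corollary~\ref{cor:energyidentity}, which is the same mechanism). Your identity $G(\pm a^{*}_n)=-\tfrac{3}{n}K_0(a^{*}_n)^{2}<0$ is correct and the empty-critical-set branch is handled exactly as in the paper.

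Two points in the single-critical-point branch need tightening. First, after normalizing $v(0)>0$ you exclude only $\ell_\pm=a^{*}_n$ and then assert ``hence $\ell_\pm=0$''; the subcase $\ell_\pm=-a^{*}_n$ (a sign-changing even solution tending to the negative equilibrium) is never addressed, and your energy estimate at $t=0$ does not kill it when $v(0)=a^{*}_n$, since then $G(v(0))=G(a^{*}_n)$ and the strict inequality collapses. It is easily closed with the tool you already use in the other branch: $v$ must cross zero at some $\tau>0$, Corollary~\ref{cor:symmetry}(ii) makes $v$ anti-symmetric about $\tau$, so $\ell_+=-\ell_-$, contradicting $\ell_+=\ell_-=-a^{*}_n\neq0$ (equivalently, evaluating the energy at $\tau$ gives $\mathcal H(v)\geqslant 0>G(a^{*}_n)$); this is in effect how the paper disposes of the sign-changing configuration. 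Second, the claim ``$v(0)>a^{*}_n$ by monotone convergence'' only holds when $0$ is a maximum; if $0$ is a minimum one has $v(0)<a^{*}_n$. The conclusion you actually need, $G(v(0))>G(a^{*}_n)$, survives in both situations because $G$ is strictly decreasing on $(0,a^{*}_n)$ and strictly increasing on $(a^{*}_n,\infty)$, and $v(0)\neq a^{*}_n$ by strict monotonicity on $(0,\infty)$ toward the limit $a^{*}_n$ — but this should be said, since as written the justification covers only half the cases.
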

	
	\begin{proof}
		Let $v \in C^{6}(\mathbb{R})$ be a solution to \eqref{ourODE} and let $\#Z^{(1)}({v})$ be its critical set as defined previously in \eqref{criticalset}.
		We now distinguish several cases with respect to the cardinality of the critical set.
		
		\noindent{\bf Case 1.} $\#Z^{(1)}({v})=0$.
		
		\noindent In this case, $v$ is strictly monotone. 
		In fact, up to replacing $v(t)$ by $v(-t)$, we may assume that $v$ is strictly increasing, and so, by Lemma~\ref{lm:asymptotics}, it follows $\ell_{\pm}=\lim_{t \rightarrow \pm \infty} v(t)\in\mathbb{R}$. 
		By Lemma~\ref{lm:energylimits}, we are reduced to studying three situations as follows.
		
		If $\ell_{-}=0$ and $\ell_{+}=a^{*}_n$, then using Lemma~\ref{lm:energylimits}, we get $\lim_{t \rightarrow-\infty} \mathcal{H}_{v}(t)=G(0)=0$, whereas $\lim_{t \rightarrow+\infty} \mathcal{H}_{v}(t)=G(a^{*}_n)<0$, a contradiction to energy conservation. In the same way, a contradiction is obtained if $\ell_{-}=-a^{*}_n$ and $\ell_{+}=0$. 
		
		It remains to consider the case $\ell_{-}=-a^{*}_n$ and $\ell_{+}=a^{*}_n$. 
		As before, we can apply Lemma~\ref{lm:energylimits}
		\begin{equation}\label{classification2}
			\lim_{|t| \rightarrow \infty} \mathcal{H}_{v}(t)=G(a^{*}_n)<0.
		\end{equation}
		On the other hand, by Corollary~\ref{cor:energyidentity}, we have that the following sign identity holds
		\begin{equation}\label{classification1}
			\mathcal{H}_{v}(t) \geqslant \frac{1}{2} v^{(3)}(t)^{2}+\mathcal{R}(v(t))+G(v(t)).
		\end{equation}
		However, by evaluating the energy at some $t=t_0$ such that $v(t_0)=0$ and using \eqref{classification1}, we obtain $\mathcal{H}_{v}(t_{0}) \geqslant \frac{1}{2} v^{(3)}(t_0)^{2}+\mathcal{R}(0)+G(0) \geqslant 0$,
		which contradicts \eqref{classification2} and the energy conservation. 
		The conclusion is that $Z^{(1)}({v})=\varnothing$ cannot occur.
		
		\noindent{\bf Case 2.} $\# Z^{(1)}({v})=1$.
		
		\noindent Up to a translation, we may assume $Z^{(1)}({v})=\{0\}$. 
		Thus, $v$ is strictly monotone on $(-\infty, 0)$ and $(0, \infty)$, which yields that $\ell_{\pm}=\lim_{t \rightarrow \pm \infty} v(t)\in\mathbb{R} \cup\{\pm \infty\}$. 
		By Lemma~\ref{lm:asymptotics}, we have $\ell_{\pm}\in\mathbb R$, and so $v$ is bounded, and, by Corollary~\ref{cor:symmetry}, also even. Then, we find $\ell_{+}=\ell_{-}$. 
		By Lemma~\ref{lm:energylimits}, only three cases can occur: $\ell_{+}=\ell_{-}=0$ or $\ell_{+}=\ell_{-}=\pm a^{*}_n$. 
		In this case, monotonicity implies that either $v>0$ or $v<0$, and we conclude $v(t)=\pm c_{n}(2\cosh(t))^{-\gamma_n}$ by Corollary~\ref{cor:closedform}.
		
		For the remaining cases, by otherwise replacing $v(t)$ by $-v(t)$, let us assume without loss of generality that $\ell_{+}=\ell_{-}=a_*$. 
		Using that $v$ is strictly monotone on $[0, \infty)$, $v(0) \neq a^{*}_n$, and $\mathcal{H}_{v}(0) \geqslant \frac{1}{2} v^{(3)}(0)^{2}+G(0) \geqslant 0$,         one can see that since $F$ attains its global minimal value only at $v=\pm a^{*}_n$ it holds $v(0)=-a^{*}_n$. Hence $v$ changes sign; {\it i.e.}, there exists $t_{0} \in \mathbb{R}$ such that $v\left(t_{0}\right)=0$. By Corollary~\ref{cor:symmetry}, we get that $v$ is anti-symmetric with respect to $t_{0}$, which is a contradiction with  $\ell_{\pm}>0$. Therefore, we obtained that if $\# Z^{(1)}({v})=1$, then $v(t)=\pm c_{n}(2\cosh(t))^{-\gamma_n}$.
		
		\noindent{\bf Case 3.} $\# Z^{(1)}(v)\geqslant2$.
		
		\noindent By continuity of $v^{(1)}$, we find that unless $v$ is constant (and hence $v \equiv \pm a^{*}_n$ or $v \equiv 0)$, the closed set $Z^{(1)}(v)$ cannot be dense; that is, there exist real numbers $t_1<t_2$ such that $v^{(1)}(t_1)=v^{(1)}(t_2)=0$ and $v^{(1)} \neq 0$ on $(t_1, t_2)$. 
		Then, by Lemma~\ref{lm:boundededness}, we get that $v$ is bounded and therefore, we can use Corollary~\ref{cor:symmetry} as in the proof of Lemma~\ref{lm:uniqueness} to find that $v$ must be periodic of period $2(t_2-t_1)$. 
		In addition, since $v$ is strictly monotone on $(t_1, t_2)$, there is (strictly) only one maximum and minimum per period interval. The symmetry with respect to the extrema follows from Corollary~\ref{cor:symmetry}.
	\end{proof}
	
	\subsection{Shooting technique}
	Finally, we are ready to prove the main result in this section.
	\begin{proof}[Proof of Proposition~\ref{prop:odeclassification}]
		The proof will be divided into three steps as follows:
		
		\noindent{\bf Step 1.} Proof of \ref{itm:C1}.
		
		By Lemma~\ref{lm:classification}, the only possible case on which the estimate \eqref{estimate} may fail to is when $v$ is periodic. 
		In this case, $v$ possesses a local minimum at, say, $t_{0} \in \mathbb{R}$. 
		Note that if $v$ has a zero then \eqref{estimate} is automatically fulfilled, so we may assume that $v$ has a fixed sign and, up to replacing $v$ by $-v$, we may assume that $v>0$. 
		However, using Lemma~\ref{lm:maximumandminimumpoints}, either $v$ is constant, and so $v \equiv a^{*}_n$, or $v(t_{0})<a^{*}_n$, which yields that \eqref{estimate} holds with strict inequality, which in turn proves (i).
		
		\noindent{\bf Step 2.} Proof of \ref{itm:C2}.
		
		The value $a_0 \in(0, a^{*}_n)$ will be considered to be fixed throughout the following argument.
		For any vector $\boldsymbol{b}=(b_1,b_2) \in\mathbb R^{2}_+:=\{\boldsymbol{b} : b_j\geqslant 0 \; {\rm for} \; j=1,2\}$, where $b_j=a_{2j}$ for $j=1,2$, let us denote by $v_{\boldsymbol{b}}\in C^{6}(\mathbb R)$ the unique solution to \eqref{ourODE} with the initial values
		\begin{equation}\label{shootinginitialvalues}
			v(0)=a_0, \quad v^{(2)}(0)=b_1, \quad v^{(4)}(0)=b_2, \quad {\rm and} \quad v^{(1)}(0)=v^{(3)}(0)=v^{(5)}(0)=0,
		\end{equation}
		and by $T_{\boldsymbol{b}} \in(0, \infty]$ its maximal forward time of existence.
		
		We now proceed to our shooting technique.
		Since $G(v) \rightarrow +\infty$ as $v \rightarrow +\infty$, for any $C(a_0)>0$, which will be chosen later, there exists $R(a_0)>0$ such that $G(v)>C(a_0)$ for all $v>R(a_0)$. 
		In this fashion, let us define the shooting decomposition sets
		\begin{equation*}
			S_1:=\left\{\boldsymbol{b}\in\mathbb R^{2}_+ : v_{\boldsymbol{b}}(t)<0 \ {\rm for \ some} \ t\in(0, T_{\boldsymbol{b}})\right\},
		\end{equation*}
		and
		\begin{equation*}
			S_2:=\left\{\boldsymbol{b}\in\mathbb R^{2}_+ : v_{\boldsymbol{b}}(t)>R(a_0)\ {\rm for \ some} \ t \in(0, T_{\boldsymbol{b}}) \ {\rm and} \ v_{\boldsymbol{b}}>0 \ {\rm in }\ [0, t]\right\}.
		\end{equation*}
		Clearly, $S_1,S_2\subset \mathbb R^{m-1}_+$ are open because of the standard continuous dependence of solutions on the initial conditions.
		Thus, we are left to show that they are both non-empty and disjoint.
		This is the content of the following claims.
		
		\noindent{\bf Claim 1:} $(b^*,+\infty)^{2}\subset S_2\neq\varnothing$, where $b^{*}=\max\{\beta_1,\beta_2\}$ are the greatest solutions to the algebraic equation $-K_{4}\beta^{4}+K_{2}\beta^{2}-\widehat{K}_0=0$ in $\mathbb R$.
		
		\noindent In fact, suppose that $b_j>b^{*}$ for any $j=1,2$.
		Hence, writing \eqref{ourODE} as
		\begin{equation}\label{odeclassification1}
			v_{\boldsymbol{b}}^{(6)}=K_{4}v_{\boldsymbol{b}}^{(4)}-K_{2}v_{\boldsymbol{b}}^{(2)}-g(v_{\boldsymbol{b}}) \quad {\rm in} \quad \mathbb R
		\end{equation}
		and combining with \eqref{shootinginitialvalues}, we find that $v_{\boldsymbol{b}}^{(6)}$ increases initially. 
		Thus, $v_{\boldsymbol{b}}^{(2)}(0)$ and $v_{\boldsymbol{b}}^{(4)}(0)$ increase initially, and since the right-hand side of \eqref{odeclassification1} is positive initially, it is easy to check that they stay positive on $[0, T_{\boldsymbol{b}})$. Whence, $v_{\boldsymbol{b}}^{(6)}>0$ in $[0, T_{\boldsymbol{b}})$, which implies that $v^{(j)}_{\boldsymbol{b}}$ for $j=0,\dots, 5$ all keep increasing on $[0, T_{\boldsymbol{b}})$. 
		Consequently, if $T_{\boldsymbol{b}}=+\infty$ then $v_{\boldsymbol{b}}$ is unbounded. 
		On the other hand, since $g$ is locally Lipschitz, if $T_{\boldsymbol{b}}<+\infty$, then $v_{\boldsymbol{b}}(t) \rightarrow +\infty$ as $t \rightarrow T_{\boldsymbol{b}}$ . To summarize, $v_{\boldsymbol{b}}^{(1)}>0$ in $[0, T_{\boldsymbol{b}})$ and $\lim_{t \rightarrow T_{\boldsymbol{b}}}v_{\boldsymbol{b}}(t)=+\infty$ when $b_j \geqslant b^*$ for any $j=1,2$.
		Based on this, we can restrict our search to the so-called shooting parameter set $\boldsymbol{b} \in[0, b^*]^{2}=:S$.
		
		\noindent{\bf Claim 2:} $(S_1\cap S_2)\cap S=\varnothing$.
		
		\noindent In fact, notice that for all $\boldsymbol{b} \in S$, we have the uniform energy bound
		\begin{equation}\label{energybound}
			\mathcal{H}({v_{\boldsymbol{b}}})(0)\leqslant -b_2b_1+b_1^2+G(a_0)\leqslant (b_1-b_2)b_1+G(a_0):=C(a_0).
		\end{equation}
		This implies that whenever $\boldsymbol{b} \in[0,b^*]^{2}$ and $v_{\boldsymbol{b}}(t_{0})>R(a_0)$, we must have $v_{\boldsymbol{b}}^{(1)}(t_{0}) \neq 0$, otherwise, by using Corollary~\ref{cor:energyidentity}, the following identity holds
		\begin{align*}
			\mathcal{H}({v_{\boldsymbol{b}}}(t_0))=\frac{1}{2}{{{v}_{\boldsymbol{b}}^{(3)}(t_0)}}^{2}+\mathcal{R}({{v}_{\boldsymbol{b}}}(t_0))+G({{v}_{\boldsymbol{b}}}(t_0))\geqslant C(a_0),
		\end{align*}
		where $\mathcal{R}({{v}_{\boldsymbol{b}}}(t_0))=\mathcal{E}_2({{v}_{\boldsymbol{b}}}(t_0)){{v}_{\boldsymbol{b}}}^{(2)}(t_0)>0$ is given by \eqref{signfunction}.
		The last inequality contradicts \eqref{energybound} and the conservation of energy in Proposition~\ref{prop:conservation}.
		In particular, if $v_{\boldsymbol{b}}$ enters the interval $(R(a_0), +\infty)$, then $v_{\boldsymbol{b}}$ cannot leave it again, and hence is certainly not a periodic solution.
		
		\noindent{\bf Claim 3:} $0\in S_1\neq\varnothing$.
		
		\noindent On the other hand, if $\boldsymbol{b}=\boldsymbol{0}$, we see from \eqref{odeclassification1} that $v_{\boldsymbol{0}}^{(6)}(0)=g(a_0)<0$, and hence $v_{\boldsymbol{0}}$, $v^{(2)}_{\boldsymbol{0}}$, and $v^{(4)}_{\boldsymbol{0}}$ are strictly decreasing on some small interval $t \in(0, \tau)$ for some $\tau>0$. Since $g(v)<0$ for $v\in(0, a_0)$, we deduce from \eqref{odeclassification1} that $v_{\boldsymbol{0}}^{(j)}$ for $j=1,\dots,5$ stay strictly negative until $v_{\boldsymbol{0}}$ reaches a negative value. 
		Therefore, if $\boldsymbol{b}=\boldsymbol{0}$, there must be $t_{0}$ such that $v_{\boldsymbol{0}}(t_{0})<0$.
		
		Since our shooting parameter interval $\mathbb R^{2}_+$ is connected, we deduce that $S_1 \cup S_2 \neq \mathbb R^{2}_+$. 
		Hence there exists $\boldsymbol{b}^{*}\in\mathbb R^{2}_+\setminus\{0\}$ and a corresponding solution $v^{*}:=v_{\boldsymbol{b}^{*}}$ such that $0 \leqslant v^{*} \leqslant R(a_0)$, which means that $v^{*}$ is bounded. 
		Combining this with the fact that $g$ is locally Lipschitz, we get that $T_{\boldsymbol{b}^{*}}=\infty$. 
		By even reflection, we find a solution defined on $\mathbb{R}$, which we still refer to as $v^{*}$. 
		Since $\boldsymbol{b}^{*}\in\mathbb R^{2}_+\setminus\{0\}$, we know $v^{*}$ has a strict local minimum at the origin. 
		Therefore, using the classification from Lemma~\ref{lm:classification}, $v^{*}$ must be periodic. Moreover, it has a unique local maximum and minimum per period and is symmetric with respect to its extrema.
		The uniqueness of $v^{*}$ up to translations follows from Lemma~\ref{lm:energyordering}.
		
		\noindent{\bf Step 3.} Proof of \ref{itm:C3}.
		
		Using that $\lambda_1=\gamma_n$, we need to prove that $w:=v^{(1)}/v$ satisfies $w(t)<\lambda_1$ for all $t\geqslant0$. Using Lemma~\ref{lm:classification}, one can find $t_{0} \in \mathbb{R}$ such that $v^{(1)}(t_{0})=0$, and so $w(t_{0})=0$. Then, we reduced to prove the following claim
		
		\noindent{\bf Claim 4.} $M:=\{t>t_{0}: w(t) \geqslant \lambda_1\}=\varnothing$.
		
		\noindent Indeed, notice that $w\in C^{5}(\mathbb R)$ satisfies
		\begin{equation}\label{quotientode}
			w^{(1)}=-w^{2}+\lambda_1+\frac{\Phi_1}{v},
		\end{equation}
		where $\Phi_1=\partial_t^{(2)}v-\lambda_1v$.
		Now, suppose by contradiction that $M \neq \varnothing$ and let $t_{1}:=\inf M$. 
		It is easy to see that $t_{1}>t_{0}$, which yields $w^{(1)}(t_{1}) \geqslant 0$. 
		On the other hand, since $w(t_{1})=\lambda_1$, \eqref{quotientode} implies
		\begin{equation*}
			w^{(1)}(t_{1})=\frac{\Phi_1(t_{1})}{v(t_{1})},
		\end{equation*}
		which combined with $v(t_1)>0$ yields
		\begin{equation}\label{contradiction}
			\Phi_1(t_{1})>0.
		\end{equation}
		
		Next, using the notation in the proof of Lemma~\ref{lm:comparisonprinciple}, we can rewrite \eqref{ourODE} as 
		\begin{equation*}
			-L_{\lambda_1\lambda_2}(\Phi_{2}):=-\Phi_{2}^{(2)}+\lambda_1\lambda_2\Phi_{2}=-f(v) \quad {\rm in} \quad \mathbb R,
		\end{equation*}
		where $\Phi_{1},\Phi_{2},\Phi_{3}$ are defined in \eqref{auxiliaryfunctions}.
		According to Lemma~\ref{lm:classification}, $v$ attains its maximum in $\mathbb{R}$. Since $v>0$, the maximum principle implies that $\Phi_{2}<0$ in $\mathbb{R}$. 
		Thus, since
		\begin{equation*}
			-L_{\lambda_1}(\Phi_{1}):=-\Phi_{1}^{(2)}+\lambda_1\Phi_{1}=\Phi_{2} \quad {\rm in} \quad \mathbb R,
		\end{equation*}
		the maximum principle again implies that $\Phi_{1}<0$ in $\mathbb{R}$.
		This is a contradiction with \eqref{contradiction}, which finishes the proof.
	\end{proof}
	
	\section{Proof of the main theorem}\label{sec:proofofmaintheorem}
	In this section, we are based on Proposition~\ref{prop:radialsymmetry} and Proposition~\ref{prop:odeclassification} to prove Theorem~\ref{maintheorem}.
	
	\begin{proof}[Proof of Theorem~\ref{maintheorem}]
		First, using Proposition~\ref{prop:radialsymmetry}, it follows that $u$ is radially symmetric.
		Notice that $v(t)=e^{\gamma_{n}t}u(e^{t})$ satisfies \eqref{ourODE}, we are in position to apply the classification result from Proposition~\ref{prop:odeclassification}.
		Hence either $v\equiv a^{*}_n$ is constant or $v$ is periodic. 
		Indeed, the only case that remains to be excluded is that $v(t)=c_{n}(2 \cosh (t-T))^{\gamma_{n}}$ for some $T\in\mathbb R$. 
		However, if this holds, we would have that $v(t) \sim c_{n} e^{\gamma_{n}t}$ as $t \rightarrow-\infty$ and hence the singularity of $u$ would be removable, which is a contradiction since $0$ is a non-removable singularity. 
		Thus, either $v$ is constant or periodic.
		
		Second, fixing $a_0:=\inf v$ and using Proposition~\ref{prop:odeclassification} (i), it follows that $a_0 \in(0, a^{*}_n]$, and $a_0=a^{*}_n$ if and only if $v \equiv a^{*}_n$. 
		Moreover, when $a_0\in(0,a^{*}_n$), the function $v$ is periodic with minimal value $a_0$. Therefore, by the Proposition~\ref{prop:odeclassification} (ii), we get that $v(t)=v_{a}(t+T)$ for some $T \in \mathbb{R}$.
		
		Finally, a simple computation shows in Emden--Fowler coordinates that $\partial^{(1)}_ru<0$ in $(0,+\infty)$ is equivalent to $v^{(1)}<\gamma_{n} v$ in $\mathbb R$, which is true because of 
		Proposition~\ref{prop:odeclassification} (iii).
		This concludes the proof of our main theorem.
	\end{proof}
	
	\begin{acknowledgement}
		This paper was finished when the first-named author held a Post-doctoral position at the University of British Columbia, whose hospitality he would like to acknowledge.
		He also wishes to express gratitude to Professor Jo\~ao Marcos do \'O for his constant support and several valuable conversations.
	\end{acknowledgement}
	

\end{document}